\newtheorem{theorem}{Theorem}[section]
\newtheorem{lemma}[theorem]{Lemma}
\newtheorem{proposition}[theorem]{Proposition}
\newtheorem{corollary}[theorem]{Corollary}
\theoremstyle{definition}
\newtheorem{definition}[theorem]{Definition}
\newcommand{\ns}{{\mathbb N\,}}
\newcommand{\hs}{{\mathcal H\,}}
\newcommand{\ks}{{\mathcal K\,}}
\theoremstyle{remark}
\newtheorem{remark}[theorem]{Remark}
\numberwithin{equation}{section}
\begin{document}

\title{On weaving g-frames for Hilbert spaces}

\author{Dongwei Li}
\address{School of Mathematical Sciences, University of Electronic Science and Technology of China, 611731, P. R. China}
\email{ldwmath@163.com}

\author{Jinsong Leng}
\address{School of Mathematical Sciences, University of Electronic Science and Technology of China, 611731, P. R. China}
\email{jinsongleng@126.com}
\author{Tingzhu Huang}
\address{School of Mathematical Sciences, University of Electronic Science and Technology of China, 611731, P. R. China}
\email{tingzhuhuang@126.com}
\author{Xiaoping Li}
\address{School of Mathematical Sciences, University of Electronic Science and Technology of China, 611731, P. R. China}
\email{lixiaoping.math@uestc.edu.cn}
\subjclass[2000]{Primary 42C15; Secondary 42C30, 41A45}



\keywords{Frame, g-frames, g-Riesz basis, Perturbation}

\begin{abstract}
Weaving frames  are powerful tools in wireless sensor networks and pre-processing signals. In this paper, we introduce the concept of weaving for g-frames in Hilbert spaces. We first give some properties of weaving g-frames and present two necessary conditions in terms of frame bounds for weaving g-frames. Then we study the properties of weakly woven g-frames and give a sufficient condition for weaving g-frames. It is shown that weakly woven is equivalent to woven. Two sufficient conditions for weaving g-Riesz bases are given. And a weaving equivalent of an unconditional g-basis for weaving g-Riesz bases is considered. Finally, we present Paley-Wiener-type perturbation results for weaving g-frames.
\end{abstract}

\maketitle

\section{Introduction}
The concept of frame in  Hilbert space introduced by Duffine and Schaeffer in their work on nonharmonic Fourier series \cite{duffin1952class}, reintroduced in 1986 by Daubechies, et al. \cite{daubechies1986painless} and popularized from then on.  Frames have established themselves by now as a standard notion in applied mathematics and engineering. Nice properties of frames have made them useful in functional analysis \cite{han2008frame,sun2015}, filter bank theory \cite{kovacevic2002filter}, coding theory \cite{leng2011optimall,leng2011optimal}, probability
statistics \cite{ehler2012random,li2016some}, and  quantum
information \cite{klyachko2008simple}.

Throughout this paper, let $\hs$ and $\ks$ be two Hilbert spaces and $\{\hs_i\}_{i\in I}$ be a sequence of closed subspaces of $\ks$, where $I$ is a subset of $\ns$. Let $L(\hs,\hs_i)$ be the collection of all bounded linear operators from $\hs$ into $\hs_i$. We denote by $I_{\hs}$ the identity operator on $\hs$. For $T\in L(\hs)$, we denote $T^{\dagger}$ for pseudo-inverse of $T$. Let $$[m]=\{1,2,\cdots,m\}~~{\rm and}~~[m]^c=\ns\setminus[m]=\{m+1,m+2,\cdots\}$$
for a given natural number $m$.
\begin{definition}
A family of vectors $\{f_i\}_{i\in I}$ in a Hilbert space $\hs$ is said to be a frame if there are constants $0<A\le B<\infty$ such that, for every $f\in\hs$, 
\begin{equation}
A\|f\|^2\le\sum_{i\in I}\left| \left\langle f,f_i\right\rangle \right| ^2\le B\|f\|^2,
\end{equation}
where $A$ and $B$ are lower frame bound and upper frame bound, respectively.
\end{definition}
 A frame is called a tight frame if $A=B$, and is called a Parseval frame if $A=B=1$. If a sequence $\{f_i\}_{i\in I}$ satisfies the upper bound condition in (1.1), then $\{f_i\}_{i\in I}$ is also called a Bessel sequence. 

Recently, several generalizations of frames in Hilbert space have been proposed, for example, fusion frames \cite{casazza2008fusion}, pseudo-frames \cite{li2004pseudoframes},  oblique frames \cite{fornasier2004quasi} and outer frames \cite{aldroubi2004wavelets}, and it was shown that they have important applications.  Sun in \cite{sun2006g} introduced the concept of g-frame and proved that all of the above generalizations of frames are special cases of g-frames.
G-frames in Hilbert spaces have been studied intensively with the development of kinds of applications. For the connection between the theory of g-frames and quantum theory as in \cite{han2011operator}. 
\begin{definition}
	A sequence $\{\Lambda_i\}_{i\in I}\subset L(\hs,\hs_i)$ of bounded operators from $\hs$ to $\hs_i$ is said to be a generalized frame, or simply a g-frame, for $\hs$ with respect to $\{\hs_i\}_{i\in I}$ if there are two positive constants $A$ and $B$ such that
	\begin{equation}
	A\|f\|^2\le \sum_{i\in I}\|\Lambda_if\|^2\le B\|f\|^2,~~~\forall f\in\hs.
	\end{equation}
\end{definition}
We call $A$ and $B$ the lower and upper g-frame bounds, respectively. We call $\{\Lambda_i\}_{i\in I}$ a tight g-frame if $A=B$ and a Parseval g-frame if $A=B=1$. If only the second inequality of (1.2) is required, we call it a g-Bessel sequence with bound $B$.

We say simply a g-frame for $\hs$ whenever the space sequence $\{\hs_i\}_{i\in I}$ is clear.

We say $\{\Lambda_i\}_{i\in I}$ is a g-frame sequence, if it is a g-frame for  ${\rm span}\{\Lambda^*_i(\hs_i)\}_{i\in I}$.

We say  $\{\Lambda_i\}_{i\in\ns}$ is an unconditional g-sequence in $\hs$ if and only if there is a constant $\gamma>0$ so that for all $\sigma\subset\ns$ and for all  $\{g_i\}_{i\in\ns}\in\oplus_{i\in\ns}\hs_i$ we have
$$\|\sum_{i \in  \sigma}\Lambda^*_ig_i\|\le \gamma\|\sum_{i \in  \sigma}\Lambda^*_ig_i+\sum_{i \in  \sigma^c}\Lambda^*_ig_i\|=\gamma\|\sum_{i \in  \ns}\Lambda^*_ig_i\|.$$
We say   $\{\Lambda_i\}_{i\in I}$ is g-complete if ${\rm\overline{span}}\{\Lambda^*_i(\hs_i):i\in I\}=\hs$.
\begin{definition}
A sequence $\{\Lambda_i\}_{i\in I}$ is called a g-Riesz basis for $\hs$ if it is g-complete and there exist constants $0<C\le D<\infty$ such that for any finite index set $J\subset I$, any $g_i\in\hs_i$ we have
$$C\sum_{i \in  J}\|g_i\|^2\le \|\sum_{i \in  J}\Lambda^*_ig_i\|^2\le D\sum_{i \in  J}\|g_i\|^2.$$
\end{definition}
We say $\{\Lambda_i\}_{i\in I}$ is g-orthonormal basis for $\hs$ if it satisfies 
$$\left\langle \Lambda^*_ig_i,\Lambda^*_jg_j\right\rangle =\delta_{ij}\left\langle g_i,g_j\right\rangle,~~{\rm and}~~\sum_{i \in  I}\|\Lambda_if\|^2=\|f\|^2$$
for any $g_i\in\hs_i$, $g_j\in\hs_j$ and $f\in\hs$.

For each sequence $\{\hs_i\}_{i\in I}$, we define the space $\oplus_{i\in I}\hs_i$ by 
$$\oplus_{i\in I}\hs_i=\{\{f_i\}_{i\in I}|f_i\in\hs_i,\|\{f_i\}_{i\in I}\|^2_2=\sum_{i\in I}\|f_i\|^2<\infty\},$$
with the inner product defined by 
$$\left\langle \{f_i\},\{g_i\}\right\rangle =\sum_{i\in I}\left\langle f_i,g_i\right\rangle .$$

The synthesis operator of $\{\Lambda_i\}_{i\in I}$ is given by 
$$T_{\Lambda}:\oplus \hs_i\longrightarrow\hs;~~~T_{\Lambda}\{g_i\}_{i\in I}=\sum_{i\in I}\Lambda_i^*g_i,~~~{\rm for ~all}~g_i\in\hs_i.$$

We call the adjoint of $T_{\Lambda}$ the analysis operator which is given by $T^*_{\Lambda}f=\{\Lambda_if\}_{i\in I}$.

By composing $T_{\Lambda}$ and $T^*_{\Lambda}$, we obtain the g-frame operator
$$S_{\Lambda}f=T_{\Lambda}T^*_{\Lambda}f=\sum_{i\in I}\Lambda^*_i\Lambda_i f$$
which is bounded, positive and invertible. Then, the following reconstruction formula takes place for all $f\in\hs$
$$f=S^{-1}_{\Lambda}S_{\Lambda}f=S_{\Lambda}S^{-1}_{\Lambda}f=\sum_{i\in I}\Lambda^*_i\Lambda_iS^{-1}_{\Lambda}f=\sum_{i\in I}S^{-1}_{\Lambda}\Lambda^*_i\Lambda_if.$$
We call $\{\Lambda_iS^{-1}_{\Lambda}\}_{i\in I}$ the canonical dual g-frame of $\{\Lambda_i\}_{i\in I}$. 

In \cite{sun2006g}, Sun showed that every g-frame can be considered as a frame. More precisely, let $\{\Lambda_i\}_{i\in I}$ be a g-frame for $\hs$ and $\{e_{i,j}\}_{j\in J_i}$ be an orthonormal basis for $\hs_i$, then there exists a frame $\{u_{i,j}\}_{i\in I,j\in J_i}$ of $\hs$ such that 
\begin{equation}
u_{i,j}=\Lambda^*_ie_{i,j},
\end{equation}
and
$$\Lambda_if=\sum_{j\in J_i}\left\langle f,u_{i,j}\right\rangle e_{i,j},~~~\forall f\in\hs,$$
and 
$$\Lambda^*_ig_i=\sum_{j\in J_i}\left\langle g_i,e_{i,j}\right\rangle u_{i,j},~~~\forall g_i\in\hs_i.$$
We call $\{u_{i,j}\}_{i\in I,j\in J_i}$ the frame induced by $\{\Lambda_i\}_{i\in I}$ with respect to  $\{e_{i,j}\}_{i\in I,j\in J_i}$. The next lemma is a characterization of g-frame by a frame.
\begin{lemma}\cite{sun2006g}
	Let $\{\Lambda_i\}_{i\in I}$ be a family of linear operators and $u_{i,j}$ be defined as in (1.3). Then $\{\Lambda_i\}_{i\in I}$ is a g-frame (tight g-frame, g-Riesz basis) for $\hs$ if and only if $\{u_{i,j}\}_{i\in I,j\in J_i}$ is a frame (tight frame, Riesz basis) for $\hs$.
\end{lemma}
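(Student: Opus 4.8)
The plan is to show that the defining inequalities for $\{\Lambda_i\}_{i\in I}$ and for $\{u_{i,j}\}_{i\in I, j\in J_i}$ are, after one expansion, literally the same inequalities. First I would fix $f\in\hs$ and, using Parseval's identity in $\hs_i$ together with $u_{i,j}=\Lambda_i^*e_{i,j}$, write
$$\|\Lambda_if\|^2=\sum_{j\in J_i}\left|\left\langle \Lambda_if,e_{i,j}\right\rangle\right|^2=\sum_{j\in J_i}\left|\left\langle f,\Lambda_i^*e_{i,j}\right\rangle\right|^2=\sum_{j\in J_i}\left|\left\langle f,u_{i,j}\right\rangle\right|^2.$$
Summing over $i\in I$ gives $\sum_{i\in I}\|\Lambda_if\|^2=\sum_{i\in I}\sum_{j\in J_i}\left|\left\langle f,u_{i,j}\right\rangle\right|^2$, so $\{\Lambda_i\}_{i\in I}$ satisfies (1.2) with bounds $A,B$ if and only if $\{u_{i,j}\}_{i\in I,j\in J_i}$ satisfies (1.1) with the \emph{same} $A,B$; in particular the tight and Parseval cases correspond with identical constants.

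For the g-Riesz basis part I would first dispatch the completeness condition: since $\{e_{i,j}\}_{j\in J_i}$ spans $\hs_i$, we have ${\rm span}\{u_{i,j}:j\in J_i\}={\rm span}\{\Lambda_i^*e_{i,j}:j\in J_i\}$, whose closure is $\overline{\Lambda_i^*(\hs_i)}$; hence ${\rm\overline{span}}\{u_{i,j}\}_{i\in I,j\in J_i}={\rm\overline{span}}\{\Lambda_i^*(\hs_i):i\in I\}$, so $\{\Lambda_i\}_{i\in I}$ is g-complete iff $\{u_{i,j}\}_{i\in I,j\in J_i}$ is complete. Then I would translate the Riesz bounds: for a finite $J\subset I$ and $g_i\in\hs_i$, expanding $g_i=\sum_{j\in J_i}\left\langle g_i,e_{i,j}\right\rangle e_{i,j}$ yields $\Lambda_i^*g_i=\sum_{j\in J_i}\left\langle g_i,e_{i,j}\right\rangle u_{i,j}$ and $\|g_i\|^2=\sum_{j\in J_i}\left|\left\langle g_i,e_{i,j}\right\rangle\right|^2$, so that
$$\sum_{i\in J}\Lambda_i^*g_i=\sum_{i\in J}\sum_{j\in J_i}\left\langle g_i,e_{i,j}\right\rangle u_{i,j},\qquad \sum_{i\in J}\|g_i\|^2=\sum_{i\in J}\sum_{j\in J_i}\left|\left\langle g_i,e_{i,j}\right\rangle\right|^2.$$
This exhibits the g-Riesz inequality as the Riesz inequality for the scalar coefficient family $\big(\left\langle g_i,e_{i,j}\right\rangle\big)$, and since every finitely supported scalar family arises in this way, the bounds $C,D$ match on both sides; combined with the completeness equivalence, the g-Riesz/Riesz case follows.

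The only genuine care needed — which I would flag as the main obstacle — is the bookkeeping between the direct sum $\oplus_{i\in I}\hs_i$ and $\ell^2$ over the double index set $\{(i,j):i\in I, j\in J_i\}$ when the $J_i$ are infinite: one must check that the (unconditional) convergence of $\sum_{j}\left\langle g_i,e_{i,j}\right\rangle u_{i,j}$ matches that of $\Lambda_i^*g_i$, which is immediate from the boundedness of $\Lambda_i^*$ and Parseval in $\hs_i$, and that restricting to finite $J\subset I$ on the g-Riesz side corresponds to finitely supported (though not necessarily ``rectangular'') scalar index sets on the Riesz side, which is handled by a short density argument. Since none of these steps is deep, the proof reduces essentially to the identity displayed in the first paragraph plus routine verification of the Riesz bounds.
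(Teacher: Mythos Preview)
Your argument is correct and is exactly the standard proof of this equivalence. Note, however, that the paper does not actually prove this lemma: it is quoted from \cite{sun2006g} and stated without proof, so there is no ``paper's own proof'' to compare against. Your computation $\|\Lambda_if\|^2=\sum_{j\in J_i}|\langle f,u_{i,j}\rangle|^2$ via Parseval in $\hs_i$ is precisely the identity underlying Sun's original argument, and your handling of the g-Riesz case (completeness via $\overline{\Lambda_i^*(\hs_i)}=\overline{\mathrm{span}}\{u_{i,j}:j\in J_i\}$, Riesz bounds via the coefficient bijection $g_i\leftrightarrow(\langle g_i,e_{i,j}\rangle)_j$) is the expected one; the bookkeeping caveat you raise about finite $J\subset I$ versus finitely supported double-indexed scalars is genuine but, as you note, resolved by density and boundedness.
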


Recently,  Bemrose, Casazza, Gr\"{a}chenig, Lammers and Lynch in \cite{bemrose2015weaving} introduced a new concept of weaving frames which is motivated by a problem regarding distributed signal processing.
For example, in wireless sensor network where frames may be subjected to distributed processing under different frames. For given two frames $\{f_i\}_{i\in I}$ and $\{g_i\}_{i\in I}$, we can think of each $i\in I$ as a sensor or node. And for each one we measure a signal with either $f_i$ or $g_i$, so that the collected information is the set of numbers $\{\left\langle f,f_i\right\rangle \}_{i\in\sigma}\cup\{\left\langle f,g_i\right\rangle \}_{i\in\sigma^c}$ for some subset $\sigma\subset I$. If $\{f_i\}_{i\in\sigma}\cup\{g_i\}_{i\in\sigma^c}$ is a frame for any choice of $\sigma\subset I$, $f$ can still be recovered robustly from these measurements, no matter which kind of measurement has been made at each node.  Hence, weaving frames have potential applications in wireless sensor networks that require distributed processing under different frames and possibly in the preprocessing of signals using Gabor frames.  Many interesting and useful results  of weaving frames are obtained, we refer to \cite{casazza2016weaving,casazza2015weaving,vashisht2017continuous,vashisht2016weaving} as references for those.

In a large wireless sensor network, due to the restrictions of hardware conditions and power, the network should be split into some sub-networks. Since stable space splittings are equivalent to g-frames \cite{sun2006g}, for each sub-network,  we measure a signal with either $\Lambda_i$ or $\Gamma_i$, where $\{\Lambda_i\}_{i\in I}$ and $\{\Gamma_i\}_{i\in I}$ are two g-frames. Then the collected information is the set of numbers $\{\Lambda_{i}f\}_{i\in\sigma}\cup\{\Gamma_{i}f\}_{i\in\sigma^c}$ for some subset $\sigma\subset I$. Can stable of the signal $f$ be obtained regardless of which measurement is taken? That is, is $\{\Lambda_{i}\}_{i\in\sigma}\cup\{\Gamma_{i}\}_{i\in\sigma^c}$ a g-frame for any choice of $\sigma$ ?

In this paper, we extend the concept of weaving frames to weaving g-frames for Hilbert spaces. We develop the fundamental properties of weaving g-frames for their own sake. When we only require each weaving  to be a g-frame without uniform lower bound and upper bound, we introduce a seemingly notation, weakly woven g-frames.
We also present Paley-Wiener-type perturbation results for weaving g-frames.

Let us briefly describe the concept of weaving frames in  Hilbert spaces.

\begin{definition}
A family of frames $\{f_{ij}\}_{i\in I,j\in[m]}$ for a Hilbert space $\hs$ is said to be woven if there are universal constants $A$ and $B$ so that for every partition $\{\sigma_j\}_{j\in[m]}$ of $I$, the family $\{f_{ij}\}_{i\in \sigma_j,j\in[m]}$ is a frame for $\hs$ with lower and upper frame bounds $A$ and $B$, respectively. Each family $\{f_{ij}\}_{i\in \sigma_j,j\in[m]}$ is called a weaving.
\end{definition}
If we only require each weaving to be a frame which is not necessarily with uniform bounds $A$ and $B$, we say that woven is weakly.
\begin{definition}
	A family of frames $\{f_{ij}\}_{i\in \ns,j\in[m]}$ in $\hs$ is said to be weakly woven if for every partition $\{\sigma_j\}_{j\in[m]}$ of $\ns$, the family $\{f_{ij}\}_{i\in \sigma_j,j\in[m]}$ is a frame for $\hs$. 
\end{definition}
The authors in \cite{bemrose2015weaving} proved that weakly woven is equivalent to the frames being woven.
\section{Weaving g-frames}
We first give the concept of weaving g-frames.
\begin{definition}
A family of g-frames $\{\Lambda_{ij}\}_{i\in I,j\in[m]}$ for a Hilbert space $\hs$ is said to be woven if there are universal constants $A$ and $B$ so that for every partition $\{\sigma_j\}_{j\in[m]}$ of $I$, the family $\{\Lambda_{ij}\}_{i\in \sigma_j,j\in[m]}$ is a g-frame for $\hs$ with lower and upper frame bounds $A$ and $B$, respectively. 
\end{definition}
As in the case of discrete weaving frame, weaving g-frame automatically has a universal upper frame bound.
\begin{proposition}
	If each $\Lambda_i=\{\Lambda_{ij}\}_{i\in I}$ is a g-Bessel sequence for $\hs$ with bounds $B_j$ for all $j\in [m]$, then every weaving is a g-Bessel sequence with $\sum_{j=1}^{m}B_j$ as a Bessel bound.
\end{proposition}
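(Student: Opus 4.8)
The plan is to prove the upper bound by a direct termwise domination argument, exactly as for discrete weaving frames. Fix an arbitrary partition $\{\sigma_j\}_{j\in[m]}$ of $I$ and an arbitrary $f\in\hs$. Since each $\sigma_j$ is a subset of $I$ and every summand $\|\Lambda_{ij}f\|^2$ is nonnegative, the partial sum over $i\in\sigma_j$ is dominated by the full sum over $i\in I$, namely $\sum_{i\in\sigma_j}\|\Lambda_{ij}f\|^2\le\sum_{i\in I}\|\Lambda_{ij}f\|^2$ for each $j\in[m]$.

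Next I would invoke the g-Bessel hypothesis on each family $\{\Lambda_{ij}\}_{i\in I}$, which gives $\sum_{i\in I}\|\Lambda_{ij}f\|^2\le B_j\|f\|^2$, and then sum these $m$ inequalities over $j\in[m]$:
\[
\sum_{j=1}^{m}\sum_{i\in\sigma_j}\|\Lambda_{ij}f\|^2
\;\le\;\sum_{j=1}^{m}\sum_{i\in I}\|\Lambda_{ij}f\|^2
\;\le\;\Big(\sum_{j=1}^{m}B_j\Big)\|f\|^2.
\]
Because $\{\sigma_j\}_{j\in[m]}$ is a genuine partition of $I$, each index $i$ lies in exactly one block $\sigma_j$, so the double sum on the left is precisely $\sum\{\|\Lambda_{ij}f\|^2:i\in\sigma_j,\ j\in[m]\}$, i.e. the g-Bessel sum of the weaving $\{\Lambda_{ij}\}_{i\in\sigma_j,j\in[m]}$. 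Since $f$ was arbitrary, this shows the weaving is a g-Bessel sequence with bound $\sum_{j=1}^{m}B_j$, and since the partition was arbitrary, the same bound serves uniformly for every weaving.

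There is essentially no obstacle here: the argument rests only on nonnegativity of the summands and finite additivity of the upper bounds. The single point worth stating explicitly is that, because we are dealing with a true partition, no index is counted more than once in the weaving, so the left-hand double sum is a sub-sum of $\sum_{j=1}^{m}\sum_{i\in I}\|\Lambda_{ij}f\|^2$ and not something larger; this is what makes $\sum_{j=1}^m B_j$ a valid universal upper bound and justifies the preceding remark that a weaving g-frame automatically possesses one.
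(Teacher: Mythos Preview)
Your proof is correct and follows exactly the same approach as the paper: fix a partition, bound each $\sum_{i\in\sigma_j}\|\Lambda_{ij}f\|^2$ by $\sum_{i\in I}\|\Lambda_{ij}f\|^2\le B_j\|f\|^2$ using nonnegativity, and sum over $j$. Your write-up is simply more verbose than the paper's one-line computation.
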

\begin{proof}
	Let $\{\sigma_j\}_{j\in[m]}$ be any partition of $I$. Then, for every $f\in\hs$, we have
\begin{equation*}
\sum_{j=1}^M\sum_{i\in \sigma_j}\left\| \Lambda_{ij}f\right\| ^2\le \sum_{j=1}^M\sum_{i\in I}\left\| \Lambda_{ij}f\right\| ^2\le \sum_{j=1}^MB_j\|f\|^2.
\end{equation*}	
This gives the desired result.
\end{proof}
We now show that a bounded operator applied to woven g-frames leaves them woven.
\begin{proposition}
Let $\{\Lambda_{ij}\}_{i\in I,j\in[m]}$ be a woven family of g-frames for $\hs$ with common frame bounds $A$ and $B$. If $T$ has a close range on $\hs$, then $\{\Lambda_{ij}T\}_{i\in I,j\in[m]}$ is also woven with bounds 
$A\|T^{\dagger}\|^{-2}$ and $B\|T\|^2$. 
\end{proposition}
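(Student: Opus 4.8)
The plan is to verify the two g-frame inequalities for an arbitrary weaving, with constants that do not depend on the partition. Fix a partition $\{\sigma_j\}_{j\in[m]}$ of $I$ and a vector $f\in\hs$, and observe that the woven sum for $\{\Lambda_{ij}T\}$ evaluated at $f$ is literally the woven sum for $\{\Lambda_{ij}\}$ evaluated at $Tf$:
\[
\sum_{j\in[m]}\sum_{i\in\sigma_j}\|\Lambda_{ij}Tf\|^2=\sum_{j\in[m]}\sum_{i\in\sigma_j}\|\Lambda_{ij}(Tf)\|^2 .
\]
Since $\{\Lambda_{ij}\}_{i\in I,j\in[m]}$ is woven with bounds $A$ and $B$, the family $\{\Lambda_{ij}\}_{i\in\sigma_j,j\in[m]}$ is a g-frame with these bounds, so the quantity above lies between $A\|Tf\|^2$ and $B\|Tf\|^2$.

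For the upper estimate I would then simply bound $\|Tf\|^2\le\|T\|^2\|f\|^2$, producing the universal Bessel bound $B\|T\|^2$ (consistent with the Bessel bound from the preceding proposition). It is worth noting in passing that each $\Lambda_{ij}T$ lies in $L(\hs,\hs_i)$ as a composition of bounded operators, and taking $\sigma_j=I$ in the argument below shows each $\{\Lambda_{ij}T\}_{i\in I}$ is itself a g-frame, so that asking whether the family is woven is meaningful.

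For the lower estimate I would use the pseudo-inverse $T^{\dagger}$, which is bounded because $T$ has closed range. Here $T^{\dagger}T$ is the orthogonal projection of $\hs$ onto $(\ker T)^{\perp}$; to get a positive lower bound valid on all of $\hs$ one uses that this projection equals $I_{\hs}$, i.e.\ that $T$ is injective, whence $\|f\|=\|T^{\dagger}Tf\|\le\|T^{\dagger}\|\,\|Tf\|$ and hence $\|Tf\|^2\ge\|T^{\dagger}\|^{-2}\|f\|^2$. Substituting into the displayed inequality yields
\[
\sum_{j\in[m]}\sum_{i\in\sigma_j}\|\Lambda_{ij}Tf\|^2\ \ge\ A\|Tf\|^2\ \ge\ A\|T^{\dagger}\|^{-2}\|f\|^2 .
\]
Since $f$ and the partition were arbitrary and the constants $A\|T^{\dagger}\|^{-2}$, $B\|T\|^2$ depend only on $A$, $B$ and $T$, this shows $\{\Lambda_{ij}T\}_{i\in I,j\in[m]}$ is woven with the claimed bounds. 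The single step that genuinely carries the hypothesis — and where I expect the proof to need care — is the inequality $\|f\|\le\|T^{\dagger}\|\,\|Tf\|$: it is exactly the statement that $T$ is bounded below, so the closed-range assumption has to be used together with injectivity (equivalently $T^{\dagger}T=I_{\hs}$), since otherwise any $0\neq f\in\ker T$ violates the lower bound and $\{\Lambda_{ij}T\}$ is only g-complete in $(\ker T)^{\perp}$. Everything else is the routine transport of the g-frame inequalities through the fixed operator $T$, uniformly over partitions.
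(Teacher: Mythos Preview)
Your argument is correct and follows the same route as the paper: apply the standard transformation $\{\Lambda_{ij}\}\mapsto\{\Lambda_{ij}T\}$ to each weaving and note that the resulting bounds $A\|T^{\dagger}\|^{-2}$ and $B\|T\|^2$ are independent of the partition. The paper's proof simply invokes the known fact that composing a g-frame with a closed-range operator yields a g-frame with these bounds, whereas you unpack that fact explicitly via $A\|Tf\|^2\le\sum_{j}\sum_{i\in\sigma_j}\|\Lambda_{ij}Tf\|^2\le B\|Tf\|^2$ and then control $\|Tf\|$ above and below.

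Your caveat about injectivity is well taken and is in fact sharper than the paper. The inequality $\|f\|\le\|T^{\dagger}\|\,\|Tf\|$ is equivalent to $T^{\dagger}T=I_{\hs}$, i.e.\ $\ker T=\{0\}$; closed range alone does not suffice, since any nonzero $f\in\ker T$ makes the lower g-frame inequality fail. The paper glosses over this by citing the result as ``known,'' and its subsequent Remark (on invertible $T$) suggests the intended hypothesis is really that $T$ be bounded below. So there is no gap in \emph{your} reasoning; rather, you have correctly located the implicit extra hypothesis the statement needs.
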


\begin{proof}
It is known that if a g-frame  has bounds $A$ and $B$, then applying an bounded operator $T$ with close range to it gives a new g-frame with bounds $A\|T^{\dagger}\|^{-2}$ and $B\|T\|^2$. Since the sequence  $\{\Lambda_{ij}\}_{i\in \sigma_j,j\in[m]}$  is a g-frame with lower and upper bounds $A$ and $B$, respectively, for any partition $\{\sigma_j\}_{j\in[m]}$ of $I$, the sequence $\{\Lambda_{ij}T\}_{i\in \sigma_j,j\in[m]}$ is also a g-frame with bounds $A\|T^{\dagger}\|^{-2}$ and $B\|T\|^2$. That is, $\{\Lambda_{ij}T\}_{i\in I,j\in[m]}$ is woven with universal bounds $A\|T^{\dagger}\|^{-2}$ and $B\|T\|^2$.
\end{proof}
\begin{remark}
	If $T$ is an invertible operator in Proposition 2.3, then $\{\Lambda_{ij}T\}_{i\in I,j\in[m]}$ is woven with universal bounds $A\|T^{-1}\|^{-2}$ and $B\|T\|^2$. In particular, the bounds do not change if $T$ is unitary.
\end{remark}
\begin{corollary}
	Let $\{\Lambda_{ij}\}_{i\in I,j\in[m]}$ be a woven family of g-frames for $\hs$ with common frame bounds $A$ and $B$. If $S_{\Lambda}^{(j)}$ represents the frame operator of  $\{\Lambda_{ij}\}_{i\in I}$ for each $j\in [m]$, then the canonical dual  $\{\Lambda_{ij}(S_{\Lambda}^{(j)})^{-1}\}_{i\in I,j\in[m]}$ is also woven with bounds 
	$\frac{1}{B}$ and $\frac{1}{A}$. 
\end{corollary}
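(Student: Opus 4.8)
The plan is to fix one partition at a time and reduce everything to the standard relationship between a g-frame, its g-frame operator and its canonical dual. Fix an arbitrary partition $\{\sigma_j\}_{j\in[m]}$ of $I$; by Definition 2.1 it is enough to exhibit lower and upper g-frame bounds for the weaving $\{\Lambda_{ij}(S_{\Lambda}^{(j)})^{-1}\}_{i\in\sigma_j,\,j\in[m]}$ that do not depend on the partition. The first step I would record is that, since the family is woven with bounds $A$ and $B$, each individual system $\{\Lambda_{ij}\}_{i\in I}$ is itself a g-frame for $\hs$ with bounds $A$ and $B$ --- take the partition with $\sigma_j=I$ and $\sigma_k=\emptyset$ for $k\neq j$. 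Hence $A\,I_{\hs}\le S_{\Lambda}^{(j)}\le B\,I_{\hs}$, so $(S_{\Lambda}^{(j)})^{-1}$ is bounded, positive and invertible with $\frac{1}{B}I_{\hs}\le (S_{\Lambda}^{(j)})^{-1}\le \frac{1}{A}I_{\hs}$, and $\{\Lambda_{ij}(S_{\Lambda}^{(j)})^{-1}\}_{i\in I}$ is precisely the canonical dual g-frame of $\{\Lambda_{ij}\}_{i\in I}$, with g-frame operator $(S_{\Lambda}^{(j)})^{-1}$ and g-frame bounds $\frac{1}{B}$ and $\frac{1}{A}$.

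Next I would handle the two bounds of a weaving of the duals. For the upper bound the argument is the one used in Proposition 2.2: each dual system $\{\Lambda_{ij}(S_{\Lambda}^{(j)})^{-1}\}_{i\in I}$ is g-Bessel, so for every $f\in\hs$ one estimates $\sum_{j\in[m]}\sum_{i\in\sigma_j}\|\Lambda_{ij}(S_{\Lambda}^{(j)})^{-1}f\|^2\le\sum_{j\in[m]}\sum_{i\in I}\|\Lambda_{ij}(S_{\Lambda}^{(j)})^{-1}f\|^2=\sum_{j\in[m]}\langle (S_{\Lambda}^{(j)})^{-1}f,f\rangle$ and controls the right-hand side by a constant times $\|f\|^2$. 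For the lower bound I would pass to the g-frame operator of the woven dual system, $\widetilde{S}_{\sigma}f=\sum_{j\in[m]}\sum_{i\in\sigma_j}(S_{\Lambda}^{(j)})^{-1}\Lambda_{ij}^{*}\Lambda_{ij}(S_{\Lambda}^{(j)})^{-1}f$, write $\langle\widetilde{S}_{\sigma}f,f\rangle=\sum_{j\in[m]}\sum_{i\in\sigma_j}\|\Lambda_{ij}(S_{\Lambda}^{(j)})^{-1}f\|^2$, and try to bound it below by a multiple of $\|f\|^2$ using the woven lower bound $A$ on $\sum_{j}\sum_{i\in\sigma_j}\|\Lambda_{ij}g\|^2$ together with the operator estimates $\frac{1}{B}\le(S_{\Lambda}^{(j)})^{-1}\le\frac{1}{A}$.

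The main obstacle is the lower bound, together with pinning the universal constants down to exactly $\frac{1}{B}$ and $\frac{1}{A}$ independently of the partition. In Proposition 2.3 and Remark 2.4 a single operator $T$ is applied uniformly to every g-frame, so the principle that applying a bounded operator preserves wovenness applies verbatim; here a \emph{different} operator $(S_{\Lambda}^{(j)})^{-1}$ is applied to the $j$-th system, the vectors $(S_{\Lambda}^{(j)})^{-1}f$ feeding the blocks of the weaving depend on $j$, and the woven lower bound only controls $\sum_{j}\sum_{i\in\sigma_j}\|\Lambda_{ij}g\|^2$ for one common $g$. This is the step I would scrutinize most carefully; it may be cleaner to take, for each partition $\sigma$, the canonical dual with respect to the g-frame operator of the weaving $\{\Lambda_{ij}\}_{i\in\sigma_j,\,j\in[m]}$ itself, since that operator has spectrum in $[A,B]$ and hence yields the bounds $\frac{1}{B}$, $\frac{1}{A}$ uniformly in $\sigma$.
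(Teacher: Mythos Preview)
You have correctly put your finger on a genuine gap, and it is a gap not only in your argument but in the paper's. The paper offers no proof of this corollary; it is placed directly after Proposition~2.3 and Remark~2.4 and is evidently meant to be read as an application of the principle ``apply an invertible operator to a woven family and stay woven with bounds $A\|T^{-1}\|^{-2}$, $B\|T\|^{2}$.'' But, exactly as you observe, Proposition~2.3 and Remark~2.4 apply a \emph{single} operator $T$ uniformly to every system, whereas here a different operator $(S_{\Lambda}^{(j)})^{-1}$ acts on the $j$-th system. The weaving of the duals is therefore not of the form $\{\Lambda_{ij}T\}_{i\in\sigma_j,\,j\in[m]}$ for one common $T$, and the proposition does not apply.

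This is not a mere technicality; the stated bounds $\tfrac{1}{B}$ and $\tfrac{1}{A}$ are in general false. Take $\hs=\mathbb{R}^2$, $I=\{1,2,3\}$, $m=2$, with the ordinary frames $\{e_1,e_2,e_1\}$ and $\{e_1,e_2,e_2\}$. Since they agree on indices $1,2$, every weaving coincides with one of the two frames, each having bounds $A=1$, $B=2$; so the pair is woven with $A=1$, $B=2$. The frame operators are $S^{(1)}=\mathrm{diag}(2,1)$ and $S^{(2)}=\mathrm{diag}(1,2)$, with canonical duals $\{\tfrac{1}{2}e_1,\,e_2,\,\tfrac{1}{2}e_1\}$ and $\{e_1,\,\tfrac{1}{2}e_2,\,\tfrac{1}{2}e_2\}$. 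Now weave the duals with $\sigma_1=\{3\}$, $\sigma_2=\{1,2\}$: one obtains $\{e_1,\,\tfrac{1}{2}e_2,\,\tfrac{1}{2}e_1\}$, for which
\[
\sum_i|\langle f,\cdot\rangle|^2=\tfrac{5}{4}a^2+\tfrac{1}{4}b^2\qquad\text{when }f=ae_1+be_2.
\]
The bounds of this weaving are $\tfrac{1}{4}$ and $\tfrac{5}{4}$, whereas the corollary claims $\tfrac{1}{B}=\tfrac{1}{2}$ and $\tfrac{1}{A}=1$.

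Your upper-bound computation is honest and gives $\sum_{j}\langle (S_\Lambda^{(j)})^{-1}f,f\rangle\le \tfrac{m}{A}\|f\|^2$, which is the bound one actually gets from Proposition~2.2; you were right not to claim $\tfrac{1}{A}$. Your lower-bound obstacle is real and cannot be removed in general. And your proposed repair --- replacing each $(S_{\Lambda}^{(j)})^{-1}$ by the inverse of the frame operator of the particular weaving --- does produce the bounds $\tfrac{1}{B}$, $\tfrac{1}{A}$, but it proves a different (and essentially immediate) statement: for each partition, the canonical dual of that weaving has those bounds. That is not the assertion that a \emph{fixed} family of dual g-frames is woven.
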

The next result gives condition on multiplying the g-frame elements by individual constants and still be left with woven g-frames.
\begin{theorem}
Let $\{\Lambda_{ij}\}_{i\in I,j\in[m]}$ be a woven family of g-frames for $\hs$ with common frame bounds $A$ and $B$. Let $0\le C\le |a^{(j)}_i|^2\le D<\infty$ for all $j\in[m]$, then $\{a^{(j)}_i\Lambda_{ij}\}_{i\in I,j\in[m]}$ is also woven with bounds $AC$ and $BD$. 
\end{theorem}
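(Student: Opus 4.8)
The plan is to fix an arbitrary partition $\{\sigma_j\}_{j\in[m]}$ of $I$ and show directly that the weaving $\{a^{(j)}_i\Lambda_{ij}\}_{i\in\sigma_j,j\in[m]}$ satisfies both the lower bound $AC$ and the upper bound $BD$, with constants independent of the partition. The only facts I need are the definition of a g-frame (the inequality (1.2) applied to the original weaving) together with the pointwise estimate $\|a^{(j)}_i\Lambda_{ij}f\|^2=|a^{(j)}_i|^2\|\Lambda_{ij}f\|^2$.

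First I would compute, for an arbitrary $f\in\hs$,
\[
\sum_{j=1}^{m}\sum_{i\in\sigma_j}\bigl\|a^{(j)}_i\Lambda_{ij}f\bigr\|^2
=\sum_{j=1}^{m}\sum_{i\in\sigma_j}|a^{(j)}_i|^2\,\|\Lambda_{ij}f\|^2 .
\]
For the upper bound, use $|a^{(j)}_i|^2\le D$ in each summand to pull $D$ out of the double sum, obtaining
\[
\sum_{j=1}^{m}\sum_{i\in\sigma_j}|a^{(j)}_i|^2\,\|\Lambda_{ij}f\|^2
\le D\sum_{j=1}^{m}\sum_{i\in\sigma_j}\|\Lambda_{ij}f\|^2
\le D\cdot B\|f\|^2,
\]
where the last step is the universal upper g-frame bound $B$ of the woven family $\{\Lambda_{ij}\}$ applied to the partition $\{\sigma_j\}$. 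Symmetrically, for the lower bound use $|a^{(j)}_i|^2\ge C\ge 0$ in each summand, so
\[
\sum_{j=1}^{m}\sum_{i\in\sigma_j}|a^{(j)}_i|^2\,\|\Lambda_{ij}f\|^2
\ge C\sum_{j=1}^{m}\sum_{i\in\sigma_j}\|\Lambda_{ij}f\|^2
\ge C\cdot A\|f\|^2,
\]
using the universal lower g-frame bound $A$. Combining the two displays gives
\[
AC\|f\|^2\le\sum_{j=1}^{m}\sum_{i\in\sigma_j}\bigl\|a^{(j)}_i\Lambda_{ij}f\bigr\|^2\le BD\|f\|^2
\]
for every $f\in\hs$, and since the partition was arbitrary this is exactly the statement that $\{a^{(j)}_i\Lambda_{ij}\}_{i\in I,j\in[m]}$ is woven with universal bounds $AC$ and $BD$.

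I do not expect any real obstacle here: the argument is a one-line pointwise comparison followed by summation, and the hypothesis $0\le C\le|a^{(j)}_i|^2\le D<\infty$ is tailored precisely so that the scalars can be factored out of the sums uniformly in $i$, $j$, and the partition. The only point worth a word of care is that $C$ is allowed to be $0$, in which case the lower bound $AC=0$ is vacuous and the family need only be shown Bessel — but the computation above still goes through verbatim, and when $C>0$ it genuinely delivers a g-frame. (Implicitly one should also note that each $\{a^{(j)}_i\Lambda_{ij}\}_{i\in I}$ is itself a g-frame, which is the special case $m=1$, $\sigma_1=I$ of the same estimate.)
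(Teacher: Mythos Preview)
Your proof is correct and is essentially the same argument as the paper's: fix an arbitrary partition, expand $\|a^{(j)}_i\Lambda_{ij}f\|^2=|a^{(j)}_i|^2\|\Lambda_{ij}f\|^2$, use $C\le|a^{(j)}_i|^2\le D$ to pull the scalar bounds out of the sum, and then invoke the universal g-frame bounds $A,B$ of the original weaving. The paper compresses all of this into a single displayed inequality; your version is just the same computation written out in more detail.
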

\begin{proof}
Since the sequence  $\{\Lambda_{ij}\}_{i\in \sigma_j,j\in[m]}$  is a g-frame with lower and upper bounds $A$ and $B$, respectively, for any partition $\{\sigma_j\}_{j\in[m]}$ of $I$, we have
\begin{equation*}
AC\|f\|^2=\min{|a^{(j)}_i|^2}A\|f\|^2\le\sum_{j=1}^{m}\sum_{i\in \sigma_j}\|a^{(j)}_i\Lambda_{ij}f\|^2\le \max{|a^{(j)}_i|^2}B\|f\|^2=BD\|f\|^2,
\end{equation*}
yielding the desired bound.
\end{proof}
\begin{remark}
If $|a^{(1)}_i|=|a^{(2)}_i|=,\cdots,=|a^{(m)}_i|=\frac{1}{A}$ in Theorem 2.5, then  $\{\frac{1}{A}\Lambda_{ij}\}_{i\in I,j\in[m]}$ is also woven with bounds $1$ and $B/A$. 
\end{remark}
The following proposition gives that weaving may possibly be check on subindex set of original.
\begin{proposition}
Let $J\subset I$. If a family of g-frames $\{\Lambda_{ij}\}_{i\in J,j\in[m]}$ is woven, then $\{\Lambda_{ij}\}_{i\in I,j\in[m]}$ is also woven.
\end{proposition}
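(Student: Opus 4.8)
The plan is to reduce the statement on $I$ to the hypothesis on $J$ by padding an arbitrary partition of $I$ with the partition of $J$ that witnesses the woven property there. Suppose $\{\Lambda_{ij}\}_{i\in J,j\in[m]}$ is woven with universal bounds $A$ and $B$. Let $\{\sigma_j\}_{j\in[m]}$ be an arbitrary partition of $I$; I want to show $\{\Lambda_{ij}\}_{i\in\sigma_j,j\in[m]}$ is a g-frame for $\hs$ with bounds not depending on the partition. The upper bound is immediate: each $\{\Lambda_{ij}\}_{i\in I}$ is in particular a g-Bessel sequence (being a g-frame on the larger index set $J\subseteq I$ forces each $\{\Lambda_{ij}\}_{i\in I}$ to be g-Bessel — actually one needs that each $\{\Lambda_{ij}\}_{i\in I, j\in[m]}$ is a g-frame family, which is part of the hypothesis that $\{\Lambda_{ij}\}_{i\in I,j\in[m]}$ is "a family of g-frames"), so by Proposition 2.2 every weaving over $I$ is g-Bessel with bound $\sum_{j=1}^m B_j$.

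For the lower bound, the key step is: intersect $\sigma_j$ with $J$. Set $\tau_j = \sigma_j \cap J$ for each $j\in[m]$. Then $\{\tau_j\}_{j\in[m]}$ is a partition of $J$, so by hypothesis $\{\Lambda_{ij}\}_{i\in\tau_j,j\in[m]}$ is a g-frame for $\hs$ with lower bound $A$. Hence for every $f\in\hs$,
\begin{equation*}
A\|f\|^2 \le \sum_{j=1}^m\sum_{i\in\tau_j}\|\Lambda_{ij}f\|^2 \le \sum_{j=1}^m\sum_{i\in\sigma_j}\|\Lambda_{ij}f\|^2,
\end{equation*}
since $\tau_j\subseteq\sigma_j$ and every term is nonnegative. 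Combined with the Bessel bound above, $\{\Lambda_{ij}\}_{i\in\sigma_j,j\in[m]}$ is a g-frame with bounds $A$ and $\sum_{j=1}^m B_j$, independent of $\{\sigma_j\}$. Therefore $\{\Lambda_{ij}\}_{i\in I,j\in[m]}$ is woven.

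There is no real obstacle here; the only point requiring a moment's care is that one must know each $\{\Lambda_{ij}\}_{i\in I}$ is a g-Bessel sequence in order to invoke Proposition 2.2 — this is part of the standing hypothesis that $\{\Lambda_{ij}\}_{i\in I,j\in[m]}$ is a family of g-frames, so there is nothing extra to prove. The essential idea is simply monotonicity of the nonnegative sum $\sum\|\Lambda_{ij}f\|^2$ under enlarging the index sets $\tau_j\subseteq\sigma_j$, together with the observation that restricting a partition of $I$ to $J$ yields a partition of $J$.
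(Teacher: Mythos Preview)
Your proof is correct and follows essentially the same approach as the paper: intersect an arbitrary partition $\{\sigma_j\}$ of $I$ with $J$ to obtain a partition of $J$, invoke the woven hypothesis there for the universal lower bound, extend by monotonicity of the nonnegative sums, and cite Proposition~2.2 for the upper bound. If anything, your write-up is slightly more careful than the paper's in making explicit that $\{\sigma_j\cap J\}_{j\in[m]}$ is a partition of $J$ and that the resulting lower bound $A$ is universal.
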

\begin{proof}
For any $\sigma_j\subset I$, then $\sigma_j\cap J\subset J$. Let $A$ be the lower bound of $\{\Lambda_{ij}\}_{i\in \sigma_j\cap J,j\in[m]}$,  then  for any $f\in\hs$  we have
\begin{align*}
A\|f\|^2&\le \sum_{j=1}^m\sum_{i\in \sigma_j\cap J}\|\Lambda_{ij}f\|^2 \le \sum_{j=1}^m\sum_{i\in \sigma_j}\|\Lambda_{ij}f\|^2.
\end{align*}
Since $\{\Lambda_{ij}\}_{i\in I}$ is a g-Bessel sequence for all $j\in[m]$ for $\hs$, from Proposition 2.2, the upper bound of $\{\Lambda_{ij}\}_{i\in I,j\in[m]}$ is always given. This implies $\{\Lambda_{ij}\}_{i\in I,j\in[m]}$ is woven for $\hs$.
\end{proof} 
The Proposition 2.8 shows that adding elements to $\{\Lambda_{ij}\}_{i\in I}$ for every $j\in[m]$ still leaves a woven family of g-frames for $\hs$. One may ask whether or not a woven family of g-frames still is woven when some elements are removed from woven g-frames.

The following result gives condition on removing elements from woven g-frames and still be left with woven frames. We state this result for two g-frames.
\begin{proposition}
Suppose $\{\Lambda_i\}_{i\in I}$ and $\{\Gamma_i\}_{i\in I}$ are woven with universal constants $A$ and $B$. If $J\subset I$ and
$$\sum_{i \in J}\|\Lambda_if\|^2\le D\|f\|^2$$
for some $0<D<A$ and for all $f\in\hs$, then $\{\Lambda_i\}_{i\in I\setminus J}$ and $\{\Gamma_i\}_{i\in I\setminus J}$ are also g-frames for $\hs$ and are woven with universal lower and upper frame bounds $A-D$ and $B$, respectively.
\end{proposition}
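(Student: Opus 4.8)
The plan is to reduce any weaving of the reduced families $\{\Lambda_i\}_{i\in I\setminus J}$ and $\{\Gamma_i\}_{i\in I\setminus J}$ to a weaving of the original (full) families, by assigning the deleted block $J$ entirely to the $\Lambda$-side, since that is the family on which we have the smallness estimate.

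Concretely, fix an arbitrary $\sigma\subset I\setminus J$ and consider the weaving $\{\Lambda_i\}_{i\in\sigma}\cup\{\Gamma_i\}_{i\in(I\setminus J)\setminus\sigma}$ of the reduced families. Because $\sigma$ and $J$ are disjoint, the set $\widetilde\sigma:=\sigma\cup J$ together with its complement $\widetilde\sigma^{\,c}=I\setminus\widetilde\sigma=(I\setminus J)\setminus\sigma$ forms a partition of $I$. Applying the woven hypothesis for $\{\Lambda_i\}_{i\in I}$ and $\{\Gamma_i\}_{i\in I}$ to this partition yields, for every $f\in\hs$,
$$A\|f\|^2\le \sum_{i\in\sigma}\|\Lambda_if\|^2+\sum_{i\in J}\|\Lambda_if\|^2+\sum_{i\in(I\setminus J)\setminus\sigma}\|\Gamma_if\|^2\le B\|f\|^2.$$
Now use $\sum_{i\in J}\|\Lambda_if\|^2\le D\|f\|^2$ to move the middle term to the left in the lower estimate, obtaining
$$(A-D)\|f\|^2\le \sum_{i\in\sigma}\|\Lambda_if\|^2+\sum_{i\in(I\setminus J)\setminus\sigma}\|\Gamma_if\|^2,$$
which is the claimed lower bound, positive since $D<A$. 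For the upper bound it suffices to drop the nonnegative term $\sum_{i\in J}\|\Lambda_if\|^2$ from the right-hand inequality above (equivalently, invoke Proposition 2.2). As $\sigma$ was arbitrary, this shows $\{\Lambda_i\}_{i\in I\setminus J}$ and $\{\Gamma_i\}_{i\in I\setminus J}$ are woven with universal bounds $A-D$ and $B$; specializing to $\sigma=I\setminus J$ and $\sigma=\emptyset$ in particular shows each of them is itself a g-frame for $\hs$.

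There is no real obstacle here: the only point to notice is that $J$ must be loaded onto the $\Lambda$-family, for which the hypothesis controls $\sum_{i\in J}\|\Lambda_if\|^2$, so that the lower bound degrades by at most $D$ and survives. It is worth remarking that restricting to two g-frames is purely for notational convenience — the same argument handles a woven family $\{\Lambda_{ij}\}_{i\in I,\,j\in[m]}$, provided one dumps the removed block $J$ onto a single index $j_0$ for which $\sum_{i\in J}\|\Lambda_{ij_0}f\|^2\le D\|f\|^2$ holds.
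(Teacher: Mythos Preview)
Your proof is correct and follows essentially the same approach as the paper's: assign the removed block $J$ to the $\Lambda$-side to form the partition $\sigma\cup J$, $(\sigma\cup J)^c$ of $I$, apply the woven hypothesis, and subtract the controlled term $\sum_{i\in J}\|\Lambda_if\|^2\le D\|f\|^2$ to obtain the lower bound $A-D$; the specializations $\sigma=I\setminus J$ and $\sigma=\emptyset$ then yield that each reduced family is itself a g-frame. Your closing remark on the extension to $m$ families is a pleasant bonus not present in the paper.
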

\begin{proof}
The fact that $B$ is an upper weaving bound is obvious. Suppose that $\sigma\subset I\setminus J$. Then for all $f\in\hs$, we have
\begin{align*}
\sum_{i \in \sigma}\|\Lambda_if\|^2+\sum_{i \in (I\setminus J)\setminus\sigma}\|\Gamma_if\|^2&=\sum_{i \in \sigma\cup J}\|\Lambda_if\|^2-\sum_{i \in J}\|\Lambda_if\|^2+\sum_{i \in (I\setminus J)\setminus\sigma}\|\Gamma_if\|^2\\
&=\sum_{i \in \sigma\cup J}\|\Lambda_if\|^2+\sum_{i \in (\sigma\cup J)^c}\|\Gamma_if\|^2-\sum_{i \in J}\|\Lambda_if\|^2\\
&\ge (A-D)\|f\|^2.
\end{align*}
Thus a lower weaving bound is $A-D$. Taking $\sigma=J^c$ and $\sigma=\emptyset$ gives that $\{\Lambda_i\}_{i\in I\setminus J}$ and $\{\Gamma_i\}_{i\in I\setminus J}$ are g-frames for $\hs$, respectively.
\end{proof}
Since a g-frame is always woven with a copy of itself, we have the immediate corollary.
\begin{corollary}
If $\{\Lambda_i\}_{i\in I}$ is a g-frame with lower frame bound $A$ and 
$$\sum_{i \in J}\|\Lambda_i\|^2\le D\|f\|^2$$
for some $0<D<A$ and for all $f\in\hs$, then $\{\Lambda_i\}_{i\in J^c}$ is a g-frame with lower bound $A-D$.
\end{corollary}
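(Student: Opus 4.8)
The plan is to obtain this as an immediate consequence of Proposition 2.10, exploiting the elementary fact already highlighted in the excerpt that any g-frame is woven with a copy of itself. Concretely, if $\{\Lambda_i\}_{i\in I}$ is a g-frame for $\hs$ with bounds $A$ and $B$, I would set $\Gamma_i=\Lambda_i$ for every $i\in I$. For any partition $\sigma\cup\sigma^c=I$ the weaving $\{\Lambda_i\}_{i\in\sigma}\cup\{\Gamma_i\}_{i\in\sigma^c}$ coincides with $\{\Lambda_i\}_{i\in I}$, hence is a g-frame with bounds $A$ and $B$; so the pair $\{\Lambda_i\}_{i\in I}$, $\{\Gamma_i\}_{i\in I}$ is woven with universal constants $A$ and $B$.

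Next I would invoke Proposition 2.10 with this $\Gamma$, the given subset $J$, and the given constant $D$ satisfying $0<D<A$; the hypothesis required there is exactly the displayed inequality of the corollary, namely $\sum_{i\in J}\|\Lambda_i f\|^2\le D\|f\|^2$ for all $f\in\hs$ (with the obvious $f$ inside the norm). Proposition 2.10 then guarantees that $\{\Lambda_i\}_{i\in I\setminus J}$ and $\{\Gamma_i\}_{i\in I\setminus J}$ are g-frames for $\hs$ and are woven with universal lower and upper frame bounds $A-D$ and $B$. Since $I\setminus J=J^c$ and $\Gamma_i=\Lambda_i$, this says precisely that $\{\Lambda_i\}_{i\in J^c}$ is a g-frame with lower bound $A-D$ (and upper bound $B$), which is the claim.

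Since there is essentially no obstacle to overcome here — this is a one-step specialization of the previous proposition — it is worth recording the self-contained direct argument as well: for every $f\in\hs$,
\[
\sum_{i\in J^c}\|\Lambda_i f\|^2=\sum_{i\in I}\|\Lambda_i f\|^2-\sum_{i\in J}\|\Lambda_i f\|^2\ge A\|f\|^2-D\|f\|^2=(A-D)\|f\|^2,
\]
while the upper bound $B$ is inherited from $\{\Lambda_i\}_{i\in I}$ because $\sum_{i\in J^c}\|\Lambda_i f\|^2\le\sum_{i\in I}\|\Lambda_i f\|^2\le B\|f\|^2$. The only place the hypothesis $D<A$ is used is to ensure the resulting lower bound $A-D$ is strictly positive, so that $\{\Lambda_i\}_{i\in J^c}$ is genuinely a g-frame rather than merely a g-Bessel sequence.
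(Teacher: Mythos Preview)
Your proposal is correct and follows exactly the paper's approach: the paper states only that ``since a g-frame is always woven with a copy of itself, we have the immediate corollary,'' which is precisely your specialization $\Gamma_i=\Lambda_i$ in the preceding proposition (Proposition~2.9, not 2.10). Your added direct computation is also fine and amounts to unwinding that proposition in this trivial case.
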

We end this section by giving a relationship between the norms of the g-frame operators of the original g-frames and the weaving.
\begin{proposition}
Let $\{\Lambda_{ij}\}_{i\in I,j\in[m]}$ be a woven family of g-frames for $\hs$ with common frame bounds $A$ and $B$. Let $S_{\Lambda}^{(j)}$ be the frame operator of $\{\Lambda_{ij}\}_{i\in I}$ for each $j\in[m]$. For any partition $\sigma_j$ of $I$, if $S_{\Psi}$ represents the frame operator of $\Psi=\{\Lambda_{ij}\}_{i\in \sigma_j,j\in[m]}$, then for any $f\in\hs$,
$$\sum_{j\in [m]}\|(S_{\Lambda}^{(j)})_{\sigma_j}f\|^2\le B\|S_{\Psi}\|\|f\|^2,$$
where $(S_{\Lambda}^{(j)})_{\sigma_j}$ denotes the frame operator $S_{\Lambda}^{(j)}$ with sum restricted to $\sigma_j$.
\end{proposition}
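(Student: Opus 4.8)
The plan is to unwind the definitions of the various frame operators and estimate the left-hand side directly. Fix a partition $\{\sigma_j\}_{j\in[m]}$ of $I$ and $f\in\hs$. By definition, $(S_{\Lambda}^{(j)})_{\sigma_j}f=\sum_{i\in\sigma_j}\Lambda_{ij}^*\Lambda_{ij}f$, and $S_{\Psi}f=\sum_{j\in[m]}\sum_{i\in\sigma_j}\Lambda_{ij}^*\Lambda_{ij}f=\sum_{j\in[m]}(S_{\Lambda}^{(j)})_{\sigma_j}f$. The first thing I would do is rewrite $\|(S_{\Lambda}^{(j)})_{\sigma_j}f\|^2$ as an inner product $\langle (S_{\Lambda}^{(j)})_{\sigma_j}f,(S_{\Lambda}^{(j)})_{\sigma_j}f\rangle$ and expand the first factor: since $(S_{\Lambda}^{(j)})_{\sigma_j}$ is a positive operator (a partial frame operator), we have $\langle (S_{\Lambda}^{(j)})_{\sigma_j}f,g\rangle=\sum_{i\in\sigma_j}\langle\Lambda_{ij}f,\Lambda_{ij}g\rangle$ for any $g$; applied with $g=(S_{\Lambda}^{(j)})_{\sigma_j}f$ this gives
\[
\|(S_{\Lambda}^{(j)})_{\sigma_j}f\|^2=\sum_{i\in\sigma_j}\bigl\langle\Lambda_{ij}f,\Lambda_{ij}\bigl((S_{\Lambda}^{(j)})_{\sigma_j}f\bigr)\bigr\rangle.
\]

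Next I would sum over $j\in[m]$ and apply the Cauchy–Schwarz inequality (in the $\oplus_{i}\hs_i$-type inner product, or just term by term) to the right-hand side, separating the factor involving $f$ from the factor involving $(S_{\Lambda}^{(j)})_{\sigma_j}f$. This yields
\[
\sum_{j\in[m]}\|(S_{\Lambda}^{(j)})_{\sigma_j}f\|^2\le\sum_{j\in[m]}\Bigl(\sum_{i\in\sigma_j}\|\Lambda_{ij}f\|^2\Bigr)^{1/2}\Bigl(\sum_{i\in\sigma_j}\bigl\|\Lambda_{ij}\bigl((S_{\Lambda}^{(j)})_{\sigma_j}f\bigr)\bigr\|^2\Bigr)^{1/2}.
\]
Now I would control the second factor: since $\{\Lambda_{ij}\}_{i\in\sigma_j}$ is a g-Bessel sequence with bound at most $B$ (indeed each weaving has upper bound $B$, and restricting to one block of the partition only decreases the sum), $\sum_{i\in\sigma_j}\|\Lambda_{ij}h\|^2\le B\|h\|^2$ for any $h\in\hs$; taking $h=(S_{\Lambda}^{(j)})_{\sigma_j}f$ bounds the second factor by $\sqrt{B}\,\|(S_{\Lambda}^{(j)})_{\sigma_j}f\|$.

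Substituting this back, a second Cauchy–Schwarz over the index $j$ (or a direct comparison) gives
\[
\sum_{j\in[m]}\|(S_{\Lambda}^{(j)})_{\sigma_j}f\|^2\le\sqrt{B}\Bigl(\sum_{j\in[m]}\sum_{i\in\sigma_j}\|\Lambda_{ij}f\|^2\Bigr)^{1/2}\Bigl(\sum_{j\in[m]}\|(S_{\Lambda}^{(j)})_{\sigma_j}f\|^2\Bigr)^{1/2},
\]
and after dividing by $\bigl(\sum_{j}\|(S_{\Lambda}^{(j)})_{\sigma_j}f\|^2\bigr)^{1/2}$ and squaring, the right-hand side becomes $B\sum_{j\in[m]}\sum_{i\in\sigma_j}\|\Lambda_{ij}f\|^2=B\langle S_{\Psi}f,f\rangle\le B\|S_{\Psi}\|\,\|f\|^2$, which is the claimed bound. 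The one point requiring a little care is the division step when $\sum_j\|(S_{\Lambda}^{(j)})_{\sigma_j}f\|^2=0$, which is trivial, so one handles it separately; the main structural idea — feeding $(S_{\Lambda}^{(j)})_{\sigma_j}f$ back into the Bessel inequality and recognizing $\sum_j\langle (S_{\Lambda}^{(j)})_{\sigma_j}f,f\rangle=\langle S_{\Psi}f,f\rangle$ — is where the proof really lives, and I expect identifying that self-referential Cauchy–Schwarz loop to be the only nonobvious step.
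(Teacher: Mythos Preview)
Your proof is correct and follows essentially the same route as the paper. The paper packages your Cauchy--Schwarz/Bessel step as a single operator-norm estimate: writing $(S_{\Lambda}^{(j)})_{\sigma_j}=(T_{\Lambda}^{(j)})_{\sigma_j}(T_{\Lambda}^{(j)})_{\sigma_j}^*$ and using $\|(T_{\Lambda}^{(j)})_{\sigma_j}\|^2\le B$ gives directly $\|(S_{\Lambda}^{(j)})_{\sigma_j}f\|^2\le B\sum_{i\in\sigma_j}\|\Lambda_{ij}f\|^2$ for each $j$, after which one simply sums over $j$ --- so your second Cauchy--Schwarz over $j$ and the division step are not needed, but the substance is identical.
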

\begin{proof}
Let $(T_{\Lambda}^{(j)})_{\sigma_j}$ be the synthesis operator of $\{\Lambda_{ij}\}_{i\in I}$ restricted to the sum over $\sigma_j$. Since $S_{\Lambda}^{(j)}\ge AI_{\hs}$, for any $f\in\hs$, we have
\begin{align*}
\sum_{j\in [m]}\|(S_{\Lambda}^{(j)})_{\sigma_j}f\|^2&=\sum_{j\in [m]}\|\sum_{i \in \sigma_j}\Lambda^*_{ij}\Lambda_{ij}f\|^2\\
&=\sum_{j\in [m]}\|(T_{\Lambda}^{(j)})_{\sigma_j}(T_{\Lambda}^{(j)})_{\sigma_j}^*f\|^2\\
&\le \sum_{j \in [m]}B\sum_{i \in \sigma_j}\|\Lambda_{ij}f\|^2\\
&=B\sum_{j \in [m]}\sum_{i \in \sigma_j}\|\Lambda_{ij}f\|^2 \\
&= B\left\langle S_{\Psi}f,f\right\rangle\\
&\le B\|S_{\Psi}\|\|f\|^2.
\end{align*}
This completes the proof of the proposition.
\end{proof}

\section{Weakly woven g-frames}
Proposition 2.2 shows one does not need to check for a universal upper frame bound because it is always given by the sum of the upper frame bounds. However, a universal lower bound is not clear in some cases such as in the infinite dimensional Hilbert spaces. To show that a universal bound must be obtained, we define a weaker form of weaving.
\begin{definition}
A family of g-frames $\{\Lambda_{ij}\}_{i\in \ns,j\in[m]}$ in $\hs$ is said to be weakly woven if for every partition $\sigma_j$ of $\ns$, the family $\{\Lambda_{ij}\}_{i\in \sigma_j,j\in[m]}$ is a g-frame for $\hs$.
\end{definition}
In the following proposition by extending Theorem 4.1 of \cite{bemrose2015weaving} , we give a characterization of weaving finite g-frames when the frame bounds are not clear.
\begin{theorem}
	A family of g-frames $\{\Lambda_{ij}\}_{i\in I,j\in[m]}$ for a finite-dimensional Hilbert space are woven if and only if for every partition $\{\sigma_j\}_{j\in [m]}$ of $I$, $\{\Lambda^*_{ij}(\hs_i)\}_{i\in\sigma_j,j\in[m]}$ spans the space.
\end{theorem}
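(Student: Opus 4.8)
The plan is to reduce the statement to the analogous characterization for ordinary frames (Theorem~4.1 of \cite{bemrose2015weaving}) by passing through the frame induced by each g-frame, exactly as in Lemma~1.4. First I would fix, for each $j\in[m]$ and each $i\in I$, an orthonormal basis $\{e_{i,j,k}\}_{k}$ of $\hs_i$ and set $u_{ij}^{(k)}=\Lambda_{ij}^*e_{i,j,k}$, so that $\{u_{ij}^{(k)}\}$ is the frame induced by $\{\Lambda_{ij}\}_{i\in I}$. By Lemma~1.4 (applied to each of the $m$ families) the g-frame data is faithfully encoded in these vector frames, and crucially the g-frame inequality $A\|f\|^2\le\sum_{i\in\sigma_j,j}\|\Lambda_{ij}f\|^2\le B\|f\|^2$ is literally the frame inequality for the woven vector system $\{u_{ij}^{(k)}\}_{i\in\sigma_j,j\in[m],k}$, since $\sum_k|\langle f,u_{ij}^{(k)}\rangle|^2=\sum_k|\langle \Lambda_{ij}f,e_{i,j,k}\rangle|^2=\|\Lambda_{ij}f\|^2$. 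So weaving of the g-frames is equivalent to weaving of the induced vector frames, provided one checks that the partitions correspond: a partition $\{\sigma_j\}$ of $I$ induces the partition $\{\sigma_j\}$ of the index set of the $u$'s (the extra index $k$ stays attached to $i$), and conversely any ``block partition'' respecting the index $i$ arises this way — and since moving individual $e_{i,j,k}$ into different blocks cannot help (it only weakens the lower bound), it suffices to consider block partitions.

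Next I would invoke Theorem~4.1 of \cite{bemrose2015weaving}: in finite dimensions, a family of frames is woven if and only if for every partition the selected vectors span the space. Applied to $\{u_{ij}^{(k)}\}$, this says weaving holds iff for every partition $\{\sigma_j\}$, the set $\{u_{ij}^{(k)}:i\in\sigma_j,\,j\in[m],\,k\}$ spans $\hs$. But $\operatorname{span}\{u_{ij}^{(k)}:k\}=\operatorname{span}\{\Lambda_{ij}^*e_{i,j,k}:k\}=\Lambda_{ij}^*(\hs_i)$, since $\{e_{i,j,k}\}_k$ spans $\hs_i$ and $\Lambda_{ij}^*$ is linear. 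Hence the spanning condition for the $u$'s is exactly the condition that $\{\Lambda_{ij}^*(\hs_i)\}_{i\in\sigma_j,j\in[m]}$ spans $\hs$, which is the statement to be proved. Assembling: g-frames woven $\iff$ induced vector frames woven $\iff$ (Thm~4.1) spanning condition on the $u$'s for every partition $\iff$ spanning condition on the $\Lambda_{ij}^*(\hs_i)$ for every partition.

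The main obstacle I expect is the bookkeeping around partitions: one must be careful that the notion of ``partition of $I$'' on the g-frame side matches ``partition of the combined index set'' on the vector side, and argue cleanly that it is enough to test block partitions — i.e.\ that allowing the finer index $k$ to be split across blocks does not change whether woven-ness holds. In finite dimensions this is painless because spanning is monotone: a block partition gives the smallest spans, so if all block partitions span then all refinements span, and the upper bound is automatic by Proposition~2.2; conversely a block partition is itself a partition of the vector index set, so the forward direction is immediate. A secondary technical point is that finite-dimensionality must be used twice, once to invoke Theorem~4.1 and once to know that a spanning (hence complete) g-Bessel family is automatically a g-frame with a positive lower bound; both are standard. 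Everything else is the translation dictionary already set up in Lemma~1.4, so the proof should be short.
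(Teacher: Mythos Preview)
Your detour through the induced vector frames and Theorem~4.1 of \cite{bemrose2015weaving} has a genuine gap at exactly the place you flag as ``the main obstacle.'' Weaving of the $g$-frames is governed by partitions of $I$, whereas Theorem~4.1 for the induced system $\{u_{ij}^{(k)}\}$ quantifies over partitions of the finer index set $\{(i,k)\}$; only the ``block'' partitions (constant in $k$) correspond to $g$-frame weavings. Your claim that ``a block partition gives the smallest spans, so if all block partitions span then all refinements span'' is false. Take $\hs=\hs_1=\mathbb{C}^2$, $I=\{1\}$, $m=2$, $\Lambda_{1,1}=I_{\hs}$ and $\Lambda_{1,2}$ the coordinate swap. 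Both block weavings (namely $\{e_1,e_2\}$) span $\hs$, so the two $g$-frames are woven; yet the non-block partition $\tau_1=\{(1,1)\}$, $\tau_2=\{(1,2)\}$ selects $u_{1,1}^{(1)}=e_1$ and $u_{1,2}^{(2)}=\Lambda_{1,2}^*e_2=e_1$, which do not span. Thus $g$-frame wovenness is \emph{not} equivalent to vector-frame wovenness of the induced system, and Theorem~4.1 cannot be invoked as a black box. The forward direction (woven $\Rightarrow$ spanning) is unaffected and needs no vector-frame detour; the gap is in the backward direction, where you rely on this false bridge to manufacture the uniform lower bound.

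The paper bypasses all of this: its proof is the single line ``The proof can be immediately obtained by Definition~3.1.'' The point is that in a finite-dimensional space a $g$-Bessel family is a $g$-frame precisely when the ranges $\{\Lambda_{ij}^*(\hs_i)\}$ span $\hs$, so the spanning hypothesis for every partition is literally the weakly woven condition. The passage from weakly woven to woven in finite dimensions is then either trivial (finitely many partitions when $I$ is finite) or the standard compactness argument on the unit sphere applied directly to the $g$-frame functional $\sigma\mapsto\sum_{j}\sum_{i\in\sigma_j}\|\Lambda_{ij}f\|^2$ --- which is exactly the mechanism behind Theorem~4.1, but run for the $g$-frame itself rather than routed through the induced frame. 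If you want to keep the induced-frame viewpoint, you should apply that compactness argument only over block partitions; Lemma~1.4 then plays no essential role and the argument collapses to the paper's direct one.
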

\begin{proof}
The proof can be immediately obtained by Definition 3.1.
\end{proof}
The equivalent of woven and weakly woven frames is significantly more difficult to show for g-frames in an infinite dimensional. We need the following theorem in the finite dimensional case. The proof is based on the technique developed in Lemma 4.3 of \cite{bemrose2015weaving}. Further, one may observe that if $I=\ns$, then Lemma 4.3 of \cite{bemrose2015weaving} can be obtained from the following theorem.
\begin{theorem}
For each $j\in[m]$, let $\{\Lambda_{ij}\}_{i\in I}$ be a g-frame for $\hs$. Assume that a partition collection of disjoint finite sets $\{\tau_j\}_{j\in[m]}$ of $I$ and for every $A>0$ there exists a partition $\{\sigma_j\}_{j\in[m]}$ of the set $I\setminus(\tau_1\cup\cdots\cup\tau_m)$ such that $\{\Lambda_{ij}\}_{i\in(\sigma_j\cup\tau_j),j\in[m] }$ has a lower frame bound less than $A$. Then there exists a partition $\{\upsilon_j\}_{j\in [m]}$ of $I$ such that the $\{\Lambda_{ij}\}_{i\in\upsilon_j,j\in[m] }$ is not a g-frame for $\hs$. That is, these g-frames are not woven.
\end{theorem}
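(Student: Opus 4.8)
The plan is to manufacture a ``bad'' partition of $I$ by combining a diagonal extraction on the partitions supplied by the hypothesis with a compactness argument on a sequence of witnessing unit vectors. Throughout write $\tau=\tau_1\cup\cdots\cup\tau_m$ and $I'=I\setminus\tau$, and recall that in the finite-dimensional space $\hs$ the optimal (largest) lower g-frame bound of any g-Bessel family is the smallest eigenvalue of its g-frame operator, hence is attained on the unit sphere; in particular a family fails to be a g-frame precisely when some unit vector is orthogonal to every range of the corresponding adjoints. First I would invoke the hypothesis with $A=\frac1n$ for each $n\in\ns$: this produces a partition $\{\sigma_j^{(n)}\}_{j\in[m]}$ of $I'$ for which the g-frame operator $S_n=\sum_{j\in[m]}\sum_{i\in\sigma_j^{(n)}\cup\tau_j}\Lambda_{ij}^{*}\Lambda_{ij}$ (well defined since each $\{\Lambda_{ij}\}_{i\in I}$ is g-Bessel) has smallest eigenvalue strictly less than $\frac1n$, and I would pick a unit vector $f_n$ realizing it, so that $\sum_{j\in[m]}\sum_{i\in\sigma_j^{(n)}\cup\tau_j}\|\Lambda_{ij}f_n\|^2=\langle S_nf_n,f_n\rangle<\frac1n$.

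Next I would enumerate $I'=\{i_1,i_2,\dots\}$ and regard each partition $\{\sigma_j^{(n)}\}_{j\in[m]}$ as a colouring $c_n\colon I'\to[m]$ assigning to $i$ the unique index $j$ with $i\in\sigma_j^{(n)}$. Since there are only finitely many colours, repeatedly passing to subsequences so that $c_n(i_1)$ is eventually constant, then $c_n(i_2)$, and so on, and then diagonalizing, yields a subsequence $(n_k)_k$ along which $c_{n_k}(i)$ stabilizes for every $i\in I'$; simultaneously, by compactness of the unit sphere of the finite-dimensional $\hs$, I may also arrange $f_{n_k}\to f$ with $\|f\|=1$. Define $\upsilon_j'=\{i\in I':\lim_k c_{n_k}(i)=j\}$ and $\upsilon_j=\upsilon_j'\cup\tau_j$; because the $\tau_j$ are pairwise disjoint and the $\upsilon_j'$ partition $I'$, the family $\{\upsilon_j\}_{j\in[m]}$ is a genuine partition of $I$, and it is the candidate bad partition.

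The core of the proof is the claim that $\sum_{j\in[m]}\sum_{i\in\upsilon_j}\|\Lambda_{ij}f\|^2=0$. For the $\tau$-part this is immediate: $\sum_{j}\sum_{i\in\tau_j}\|\Lambda_{ij}f_{n_k}\|^2\le\langle S_{n_k}f_{n_k},f_{n_k}\rangle<\frac1{n_k}$, and letting $k\to\infty$ in this finite sum (using $f_{n_k}\to f$) gives $\sum_{j}\sum_{i\in\tau_j}\|\Lambda_{ij}f\|^2=0$. For the $I'$-part, fix a finite set $G\subset I'$; since $G$ is finite, for all large $k$ every node of $G$ has already stabilized, so $G\cap\upsilon_j'\subset\sigma_j^{(n_k)}$ for each $j$, and therefore $\sum_{j}\sum_{i\in G\cap\upsilon_j'}\|\Lambda_{ij}f_{n_k}\|^2\le\sum_{j}\sum_{i\in\sigma_j^{(n_k)}\cup\tau_j}\|\Lambda_{ij}f_{n_k}\|^2<\frac1{n_k}$; letting $k\to\infty$ in this finite sum gives $\sum_{j}\sum_{i\in G\cap\upsilon_j'}\|\Lambda_{ij}f\|^2=0$, and taking the supremum over finite $G\subset I'$ yields $\sum_{j}\sum_{i\in\upsilon_j'}\|\Lambda_{ij}f\|^2=0$. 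Adding the two parts gives the claim, so $\Lambda_{ij}f=0$ for every $i\in\upsilon_j$ and $j\in[m]$, i.e. the unit vector $f$ is orthogonal to every $\Lambda_{ij}^{*}(\hs_i)$; hence $\{\Lambda_{ij}\}_{i\in\upsilon_j,j\in[m]}$ is not g-complete, so it is not a g-frame, and consequently $\{\Lambda_{ij}\}_{i\in I,j\in[m]}$ is not woven.

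The step I expect to be the main obstacle is the simultaneous extraction together with the limit interchange: one has to pass to a single subsequence along which both the countably many node-colours stabilize and the witnessing unit vectors converge, and then be careful that the inequalities are passed to the limit only across genuinely finite sums — each fixed finite $G$ first, and only afterwards the supremum over $G$, which is legitimate because a sum of nonnegative terms equals the supremum of its finite partial sums. Everything else — the reduction via $A=\frac1n$, the definition of $\upsilon_j$, and the passage from ``$f$ kills all the ranges'' to ``not a g-frame'' — is routine once the finite-dimensional spectral fact recalled at the outset is in hand.
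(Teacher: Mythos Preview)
Your argument rests on compactness of the unit sphere of $\hs$, which you invoke by treating $\hs$ as finite-dimensional. That reading is not supported by the theorem: despite the awkward sentence preceding it in the paper, the paper's own proof makes no finite-dimensionality assumption on $\hs$, and the result is precisely the tool meant to handle the infinite-dimensional situation (Corollary~3.4 and the discussion around Theorem~3.7). In an infinite-dimensional $\hs$ the sequence $(f_{n_k})$ need not have any norm-convergent subsequence; at best one obtains a weak limit $f_{n_k}\rightharpoonup f$, and then nothing prevents $f=0$, in which case your conclusion $\sum_{j}\sum_{i\in\upsilon_j}\|\Lambda_{ij}f\|^2=0$ is vacuous and says nothing about the lower g-frame bound. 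So the compactness step is a genuine gap, not a technicality.

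The paper circumvents this by never passing to a limit of the witness vectors. It also reads the hypothesis differently from you: as a universal quantifier over \emph{all} disjoint finite collections $\{\tau_j\}$, not a single fixed one. The construction is iterative: at stage $p$ one freezes the partition on an initial block $I_1\cup\cdots\cup I_{k_{p-1}}$ as $\tau_{pj}$, invokes the hypothesis with this $\{\tau_{pj}\}$ and $A=\tfrac1p$ to obtain $\{\sigma_{pj}\}$ and a unit vector $f_p$ with $\sum_j\sum_{i\in\sigma_{pj}\cup\tau_{pj}}\|\Lambda_{ij}f_p\|^2<\tfrac1p$, and then uses only the g-Bessel property of each $\{\Lambda_{ij}\}_{i\in I}$ to choose $k_p>k_{p-1}$ so that the \emph{tail} beyond $I_{k_p}$ contributes less than $\tfrac1p$ for this same $f_p$. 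The limit partition $\upsilon_j=\bigcup_p\tau_{pj}$ then agrees with $\tau_{(p+1)j}\subset\sigma_{pj}\cup\tau_{pj}$ on the first $k_p$ blocks, so for every $p$ the vector $f_p$ itself witnesses $\sum_j\sum_{i\in\upsilon_j}\|\Lambda_{ij}f_p\|^2<\tfrac2p$; hence no positive lower bound can exist. The key idea you are missing is that one tests the single limiting partition against the whole sequence $(f_p)$ rather than against a single limiting vector, and the Bessel tail estimate is what makes each $f_p$ remain a valid witness after the partition has been modified beyond index $k_p$. Your diagonal extraction of colourings is morally the same ``stabilise on initial segments'' idea, but without the tail control it cannot be decoupled from convergence of the $f_n$'s.
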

\begin{proof}
Since $I$ is a finite index, $I=\cup_{i\in\ns}I_i$, where $I_1,\cdots,I_i,\cdots$ are disjoint index sets. Suppose $\tau_{ij}=\emptyset$ for all $j\in [m]$ and $A=1$. Then there exists a partition $\{\sigma_{1j}\}_{j\in [m]}$ of $I$ such that $\{\Lambda_{ij}\}_{i\in (\sigma_{1j}\cup\tau_{1j}),j\in[m] }$ has a lower bound less than 1. Therefore, there exists a vector $f_1\in\hs$ with $\|f_1\|=1$ such that 
$$\sum_{j \in [m]}\sum_{i\in (\sigma_{1j}\cup\tau_{1j})}\|\Lambda_{ij}f_1\|^2=\sum_{i\in (\sigma_{11}\cup\tau_{11})}\|\Lambda_{i1}f_1\|^2+\cdots+\sum_{i\in (\sigma_{1m}\cup\tau_{1m})}\|\Lambda_{im}f_1\|^2<1.$$
Since 
$$\sum_{j \in [m]}\sum_{i \in I}\|\Lambda_{ij}f_1\|^2=\sum_{i \in I}\|\Lambda_{i1}f_1\|^2+\cdots+\sum_{i \in I}\|\Lambda_{im}f_1\|^2<\infty,$$
there is a $k_1\in\ns$ so that
$$\sum_{j \in [m]}\sum_{i \in \kappa_1}\|\Lambda_{ij}f_1\|^2<1,$$
where $\kappa_1=\cup_{i\ge k_1+1}I_i$. Choose a partition $\{\tau_{2j}\}_{j\in[m]}$ of $I_1\cup\cdots \cup I_{k_1}$ such that $\tau_{2j}=\tau_{1j}\cup(\sigma_{1j}\cap(I_1\cup\cdots \cup I_{k_1}))$ for all $j\in[m]$ and $A=\frac{1}{2}$. Then there exists a partition $\{\sigma_{2j}\}_{j\in[m]}$ of $I\setminus(I_1\cup\cdots\cup  I_{k_1})$ such that $\{\Lambda_{ij}\}_{i\in (\sigma_{2j}\cup\tau_{2j}),j\in[m] }$ has a lower bound less than $\frac{1}{2}$. Therefore, there exists a vector $f_2\in\hs$ with $\|f_2\|=1$ such that
$$\sum_{j \in [m]}\sum_{i\in (\sigma_{2j}\cup\tau_{2j})}\|\Lambda_{ij}f_2\|^2=\sum_{i\in (\sigma_{21}\cup\tau_{21})}\|\Lambda_{i1}f_2\|^2+\cdots+\sum_{i\in (\sigma_{2m}\cup\tau_{2m})}\|\Lambda_{im}f_2\|^2<\frac{1}{2}.$$
Again, since
$$\sum_{j \in [m]}\sum_{i \in I}\|\Lambda_{ij}f_2\|^2<\infty,$$
we can find an integer $k_2>k_1$ such that 
$$\sum_{j \in [m]}\sum_{i \in \kappa_2}\|\Lambda_{ij}f_2\|^2<\frac{1}{2},$$
where $\kappa_2=\cup_{i\ge k_2+1}I_i$.\\
Continuing in this way, for $A=\frac{1}{p}$ and for a partition $\{\tau_{pj}\}_{j\in[m]}$ of $I_1\cup\cdots \cup I_{k_{p-1}}$ such that $$\tau_{pj}=\tau_{(p-1)j}\cup(\sigma_{(p-1)j}\cap(I_1\cup\cdots \cup I_{k_{p-1}}))$$
for all $j\in[m]$, there exists a partition $\{\sigma_{pj}\}_{j\in[m]}$ of $I\setminus(I_1\cup\cdots \cup I_{k_{p-1}})$ such that $\{\Lambda_{ij}\}_{i\in (\sigma_{pj}\cup\tau_{pj}),j\in[m] }$ has a lower bound less than $\frac{1}{p}$. Thus, there exists a $f_p\in\hs$ with $\|f_p\|=1$ such that 
$$\sum_{j \in [m]}\sum_{i\in (\sigma_{pj}\cup\tau_{pj})}\|\Lambda_{ij}f_p\|^2=\sum_{i\in (\sigma_{p1}\cup\tau_{p1})}\|\Lambda_{i1}f_p\|^2+\cdots+\sum_{i\in (\sigma_{pm}\cup\tau_{pm})}\|\Lambda_{im}f_p\|^2<\frac{1}{p}.$$
Now 
$$\sum_{j \in [m]}\sum_{i \in I}\|\Lambda_{ij}f_p\|^2<\infty,$$
there exists a $k_p>k_{p-1}$ such that 
$$\sum_{j \in [m]}\sum_{i \in \kappa_p}\|\Lambda_{ij}f_p\|^2<\frac{1}{p},$$
where $\kappa_p=\cup_{i\ge k_p+1}I_i$. Choose a partition $\{\upsilon_j\}_{j\in[m]}$ of $I$, where $\upsilon_j=\cup_{i\in\ns}\{\tau_{ij}\}=\tau_{p+1j}\cup(\upsilon_j\cap I\setminus(I_1\cup\cdots\cup  I_{{p}}) )$. Then the family  $\{\Lambda_{ij}\}_{i\in\upsilon_j,j\in[m] }$ is not a g-frame for $\hs$. If not, let $\{\Lambda_{ij}\}_{i\in\upsilon_j,j\in[m] }$ be a g-frame for $\hs$ with frame bounds $C$ and $D$, respectively. Thus, by using the Archimedean Property, there exists a $q\in\ns$ such that $q>\frac{2}{C}$. There exists a $f_q\in\hs$ with $\|f_q\|=1$ such that  
\begin{align*}
	\sum_{j\in [m]}\sum_{i\in\upsilon_j}\|\Lambda_{ij}f_q\|^2&=\sum_{i\in\upsilon_1}\|\Lambda_{i1}f_q\|^2+\sum_{i\in\upsilon_2}\|\Lambda_{i2}f_q\|^2+\cdots+\sum_{i\in\upsilon_m}\|\Lambda_{im}f_q\|^2\\
	&=\bigg[ \sum_{i\in\tau_{(q+1)1}}\|\Lambda_{i1}f_q\|^2+\cdots+\sum_{i\in\tau_{(q+1)m}}\|\Lambda_{im}f_q\|^2\bigg] \\ &\quad+\bigg[ \sum_{i\in\upsilon_1\cap I\setminus(I_1\cup\cdots I_{{q}})}\|\Lambda_{i1}f_q\|^2+\cdots+\sum_{i\in\upsilon_m\cap I\setminus(I_1\cup\cdots I_{{q}})}\|\Lambda_{im}f_q\|^2\bigg] \\
	&\le \bigg[ \sum_{i\in(\sigma_{q1}\cup\tau_{q1})}\|\Lambda_{i1}f_q\|^2+\cdots+\sum_{i\in(\sigma_{qm}\cup\tau_{qm})}\|\Lambda_{im}f_q\|^2\bigg] \\ 
	&\quad+\bigg[ \sum_{i\in\cup_{i\ge q+1}I_i}\|\Lambda_{i1}f_q\|^2+\cdots+\sum_{i\in\cup_{i\ge q+1}I_i}\|\Lambda_{im}f_q\|^2\bigg] \\
	&< \frac{1}{q}+\frac{1}{q}=\frac{2}{q}\|f_q\|^2<C\|f_q\|^2,
\end{align*}
which is a contradiction because $C$ is the lower bound of $\{\Lambda_{ij}\}_{i\in\upsilon_j,j\in[m] }$. This completes the proof.
\end{proof}
The outcome in Theorem 3.3 gives a necessary condition for weaving g-frames.
\begin{corollary}
Let $\{\Lambda_{ij}\}_{i\in I,j\in[m]}$ be a family of woven g-frames for $\hs$. Then there exists a collection of disjoint finite subsets $\{\tau_j\}_{j\in[m]}$ of $I$ and $A>0$ such that for any partition $\{\sigma_j\}_{j\in I}$ of the set $I\setminus(\tau_1\cup\cdots\cup\tau_m)$, the family $\{\Lambda_{ij}\}_{i\in(\sigma_j\cup\tau_j),j\in[m] }$ is a g-frame for $\hs$ with lower frame bound $A$.
\end{corollary}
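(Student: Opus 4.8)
The plan is to derive Corollary 3.4 as the contrapositive of Theorem 3.3. Theorem 3.3 says: if for \emph{every} choice of a disjoint finite collection $\{\tau_j\}_{j\in[m]}$ and for \emph{every} $A>0$ one can find a partition $\{\sigma_j\}_{j\in[m]}$ of the complement making the lower bound of $\{\Lambda_{ij}\}_{i\in(\sigma_j\cup\tau_j),j\in[m]}$ smaller than $A$, then the family is not woven. So if the family \emph{is} woven, the hypothesis of Theorem 3.3 must fail, which is exactly the assertion of the corollary. I would write this out carefully, since the quantifier bookkeeping is the only real content.

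First I would state precisely what failure of the Theorem 3.3 hypothesis means. The hypothesis is of the form ``$\forall$ disjoint finite $\{\tau_j\}$, $\forall A>0$, $\exists$ partition $\{\sigma_j\}$ of $I\setminus\bigcup_j\tau_j$ with lower bound $<A$.'' Its negation is ``$\exists$ disjoint finite $\{\tau_j\}$, $\exists A>0$, $\forall$ partition $\{\sigma_j\}$ of $I\setminus\bigcup_j\tau_j$, the lower bound of $\{\Lambda_{ij}\}_{i\in(\sigma_j\cup\tau_j),j\in[m]}$ is $\ge A$.'' That is precisely the conclusion of the corollary: there exist a disjoint finite collection $\{\tau_j\}_{j\in[m]}$ and a constant $A>0$ such that every such $\{\sigma_j\}$-partition yields a g-frame $\{\Lambda_{ij}\}_{i\in(\sigma_j\cup\tau_j),j\in[m]}$ with lower frame bound at least $A$ (hence equal to $A$ after relabeling $A$ as this uniform lower bound).

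So the proof body is short: assume $\{\Lambda_{ij}\}_{i\in I,j\in[m]}$ is woven. If the conclusion of the corollary failed, then for every disjoint finite $\{\tau_j\}_{j\in[m]}$ and every $A>0$ there would be a partition $\{\sigma_j\}_{j\in[m]}$ of $I\setminus(\tau_1\cup\cdots\cup\tau_m)$ for which $\{\Lambda_{ij}\}_{i\in(\sigma_j\cup\tau_j),j\in[m]}$ has lower frame bound strictly less than $A$. But that is exactly the hypothesis of Theorem 3.3, whose conclusion is that $\{\Lambda_{ij}\}_{i\in I,j\in[m]}$ is not woven --- contradicting our assumption. Therefore the conclusion of the corollary holds. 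I would also note the one small subtlety: the upper bound is never in question because Proposition 2.2 already guarantees a universal upper bound $\sum_j B_j$ for every weaving, so each $\{\Lambda_{ij}\}_{i\in(\sigma_j\cup\tau_j),j\in[m]}$ is automatically a g-Bessel sequence; combining this with the uniform lower bound $A$ gives that each is genuinely a g-frame, as claimed.

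The main obstacle here is not mathematical depth but careful quantifier negation: one must make sure the finite collection $\{\tau_j\}$ in Theorem 3.3 is \emph{universally} quantified in the hypothesis (so that its negation produces the \emph{existential} ``there exists a collection $\{\tau_j\}$'' in the corollary), and that the ``$A>0$'' likewise flips from universal to existential. A secondary point of care is matching the index notation: the corollary writes ``for any partition $\{\sigma_j\}_{j\in I}$'' where ``$j\in I$'' is evidently a typo for ``$j\in[m]$,'' and I would silently read it as $\{\sigma_j\}_{j\in[m]}$ to stay consistent with Theorem 3.3. Once the logical skeleton is fixed, no computation is required --- the corollary is a pure restatement of the contrapositive.
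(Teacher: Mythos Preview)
Your proposal is correct and is exactly the paper's approach: the paper states Corollary 3.4 immediately after Theorem 3.3 with the remark that ``the outcome in Theorem 3.3 gives a necessary condition for weaving g-frames'' and supplies no separate proof, i.e., it treats the corollary as the contrapositive just as you do. Your quantifier analysis and the observation about Proposition 2.2 supplying the upper bound are accurate and, if anything, more explicit than what the paper provides.
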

Next, we give a sufficient condition for weaving g-frames.
\begin{theorem}
For each $j\in[m]$, let $\{\Lambda_{ij}\}_{i\in I}$ be a g-frame for $\hs$ with bounds $A_j$ and $B_j$. Suppose there exists $K>0$ such that
\begin{equation*}
\sum_{i \in  J}\|(\Lambda_{ij}-\Lambda_{il})f\|^2\le K\min\{\sum_{i \in  J}\|\Lambda_{ij}f\|^2,\sum_{i \in  J}\|\Lambda_{il}f\|^2\}~~(j,l\in[m],~j\ne l)
\end{equation*}
for all $f\in\hs$ and for all subsets $J\subset I$. Then the family of g-frames $\{\{\Lambda_{ij}\}_{i\in I}:j\in[m]\}$ is woven with universal frame bounds $\frac{\sum_{j\in [m]}A_j}{2(m-1)(K+1)+1}$ and $\sum_{j \in [m]}B_j$.
\end{theorem}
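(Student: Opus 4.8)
The plan is to dispose of the upper bound immediately and put all the work into the universal lower bound. Since each $\{\Lambda_{ij}\}_{i\in I}$ is in particular a g-Bessel sequence with bound $B_j$, Proposition 2.2 shows at once that every weaving is g-Bessel with bound $\sum_{j\in[m]}B_j$. So fix an arbitrary partition $\{\sigma_j\}_{j\in[m]}$ of $I$ and $f\in\hs$, and write $S=\sum_{j\in[m]}\sum_{i\in\sigma_j}\|\Lambda_{ij}f\|^2$ for the corresponding weaving sum; the goal is to show $S\ge \frac{\sum_{j\in[m]}A_j}{2(m-1)(K+1)+1}\|f\|^2$.

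The first step is to compare, block by block, the $l$-th analysis sum with the $j$-th one. Using $\|\Lambda_{il}f\|^2\le 2\|\Lambda_{ij}f\|^2+2\|(\Lambda_{ij}-\Lambda_{il})f\|^2$, summing over $i\in\sigma_j$, and then applying the hypothesis with $J=\sigma_j$ (the minimum there is certainly $\le\sum_{i\in\sigma_j}\|\Lambda_{ij}f\|^2$), one obtains, for every $l\neq j$,
\[
\sum_{i\in\sigma_j}\|\Lambda_{il}f\|^2\;\le\;2(K+1)\sum_{i\in\sigma_j}\|\Lambda_{ij}f\|^2 .
\]
The second step invokes the lower g-frame bound of $\{\Lambda_{il}\}_{i\in I}$ together with the partition: for each fixed $l\in[m]$,
\[
A_l\|f\|^2\le\sum_{i\in I}\|\Lambda_{il}f\|^2=\sum_{i\in\sigma_l}\|\Lambda_{il}f\|^2+\sum_{j\neq l}\sum_{i\in\sigma_j}\|\Lambda_{il}f\|^2\le\sum_{i\in\sigma_l}\|\Lambda_{il}f\|^2+2(K+1)\sum_{j\neq l}\sum_{i\in\sigma_j}\|\Lambda_{ij}f\|^2 .
\]
Now I would sum this inequality over $l=1,\dots,m$. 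The key bookkeeping point is that $\sum_{l\in[m]}\sum_{i\in\sigma_l}\|\Lambda_{il}f\|^2$ is exactly $S$, while $\sum_{l\in[m]}\sum_{j\neq l}\sum_{i\in\sigma_j}\|\Lambda_{ij}f\|^2=(m-1)S$, since each block contributes once for every index $l$ different from its own. Hence $\big(\sum_{l\in[m]}A_l\big)\|f\|^2\le S+2(K+1)(m-1)S=[2(m-1)(K+1)+1]S$, which rearranges to the claimed lower bound. As $f$ and the partition were arbitrary, the family is woven with the stated universal bounds.

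The argument is really just a careful accounting, so there is no single hard step; the one thing to get right is the constant — in particular that after the symmetrizing sum over $l$ the off-diagonal contributions collapse to precisely $(m-1)S$, which is what produces $2(m-1)(K+1)+1$ in the denominator. One could instead run the first step with the sharper estimate $\big(\sum\|\Lambda_{il}f\|^2\big)^{1/2}\le\big(\sum\|\Lambda_{ij}f\|^2\big)^{1/2}+\big(\sum\|(\Lambda_{ij}-\Lambda_{il})f\|^2\big)^{1/2}\le(1+\sqrt{K})\big(\sum\|\Lambda_{ij}f\|^2\big)^{1/2}$, but since $(1+\sqrt{K})^2\le 2(K+1)$ this gives the same stated bound, so the factor-of-two version above is the economical choice.
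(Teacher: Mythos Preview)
Your proof is correct and follows essentially the same approach as the paper: both start from $\sum_{l}A_l\|f\|^2\le\sum_l\sum_{i\in I}\|\Lambda_{il}f\|^2$, split over the partition, bound each off-diagonal block $\sum_{i\in\sigma_j}\|\Lambda_{il}f\|^2$ by $2(K+1)\sum_{i\in\sigma_j}\|\Lambda_{ij}f\|^2$ via the elementary inequality $\|a\|^2\le 2\|b\|^2+2\|a-b\|^2$ and the hypothesis, and then count that the off-diagonal total is $(m-1)\cdot 2(K+1)S$. Your write-up is in fact a bit cleaner than the paper's long chain of displayed inequalities, but the underlying argument and constants are identical.
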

\begin{proof}
	From Proposition 2.2, we know that the $\{\{\Lambda_{ij}\}_{i\in I}:j\in[m]\}$ satisfies upper frame inequality with universal upper frame bound $\sum_{j \in [m]}B_j$. Let $\{\sigma_j\}_{j\in[m]}$ be any partition of $I$. For the lower frame inequality, we have
\begin{align*}
	\sum_{j\in [m]}A_j\|f\|^2&=A_1\|f\|^2+\cdots+A_m\|f\|^2\\
	&\le \sum_{i \in  I}\|\Lambda_{i1}f\|^2+\cdots+\sum_{i \in  I}\|\Lambda_{im}f\|^2\\ 
	&=\bigg(\sum_{i \in  \sigma_1}\|\Lambda_{i1}f\|^2+\cdots+\sum_{i \in  \sigma_m}\|\Lambda_{i1}f\|^2\bigg)+\cdots\\
	&\quad+\bigg(\sum_{i \in  \sigma_1}\|\Lambda_{im}f\|^2+\cdots+\sum_{i \in  \sigma_m}\|\Lambda_{im}f\|^2\bigg)\\
	&\le  \bigg[ \sum_{i \in  \sigma_1}\|\Lambda_{i1}f\|^2+2\bigg( \sum_{i \in  \sigma_2}\|(\Lambda_{i1}-\Lambda_{i2})f\|^2+\sum_{i \in  \sigma_2}\|\Lambda_{i2}f\|^2\bigg)+\cdots\\ 
	&\qquad+2\bigg( \sum_{i \in  \sigma_m}\|(\Lambda_{i1}-\Lambda_{im})f\|^2+\sum_{i \in  \sigma_m}\|\Lambda_{im}f\|^2\bigg) \bigg]+\cdots\\
	&\quad+\bigg[2\bigg( \sum_{i \in  \sigma_1}\|(\Lambda_{im}-\Lambda_{i1})f\|^2+\sum_{i \in  \sigma_1}\|\Lambda_{i1}f\|^2\bigg) +\cdots\\
	&\qquad+2\bigg( \sum_{i \in  \sigma_{m-1}}\|(\Lambda_{im}-\Lambda_{i(m-1)})f\|^2+\sum_{i \in  \sigma_{m-1}}\|\Lambda_{i(m-1)}f\|^2\bigg) \\
	&\qquad+\sum_{i \in  \sigma_{m}}\|\Lambda_{im}f\|^2\bigg]\\
	&\le \bigg[ \sum_{i \in  \sigma_1}\|\Lambda_{i1}f\|^2+2\bigg( K\sum_{i \in  \sigma_2}\|\Lambda_{i2}f\|^2+\sum_{i \in  \sigma_2}\|\Lambda_{i2}f\|^2\bigg)+\cdots\\ 
	&\qquad+2\bigg( K\sum_{i \in  \sigma_m}\|\Lambda_{im}f\|^2+\sum_{i \in  \sigma_m}\|\Lambda_{im}f\|^2\bigg) \bigg]+\cdots\\
	&\quad+\bigg[2\bigg( K\sum_{i \in  \sigma_1}\|\Lambda_{i1}f\|^2+\sum_{i \in  \sigma_1}\|\Lambda_{i1}f\|^2\bigg) +\cdots\\
	&\qquad+2\bigg( K\sum_{i \in  \sigma_{m-1}}\|\Lambda_{i(m-1)}f\|^2+\sum_{i \in  \sigma_{m-1}}\|\Lambda_{i(m-1)}f\|^2\bigg) \\
	&\qquad+\sum_{i \in  \sigma_{m}}\|\Lambda_{im}f\|^2\bigg]\\
	&=\sum_{i \in  \sigma_1}\|\Lambda_{i1}f\|^2+\cdots+\sum_{i \in  \sigma_{m}}\|\Lambda_{im}f\|^2\\
	&\quad+(m-1)2(K+1)\bigg(\sum_{i \in  \sigma_1}\|\Lambda_{i1}f\|^2+\cdots+\sum_{i \in  \sigma_{m}}\|\Lambda_{im}f\|^2\bigg)\\
	&=[2(m-1)(K+1)+1]\sum_{j \in [m]}\sum_{i \in  \sigma_j}\|\Lambda_{ij}f\|^2
\end{align*}	
for all $f\in\hs$. Hence, for all $f\in\hs$, we have
$$\frac{\sum_{j\in [m]}A_j}{2(m-1)(K+1)+1}\|f\|^2\le \sum_{j \in [m]}\sum_{i \in  \sigma_j}\|\Lambda_{ij}f\|^2\le \sum_{j \in [m]}B_j\|f\|^2.$$
The proof is completed.
\end{proof}
The authors of \cite{bemrose2015weaving} showed that the weakly woven is equivalent to the frames being woven.
\begin{theorem}\cite{bemrose2015weaving}
Given two frames $\{f_i\}_{i=1}^{\infty}$ and $\{g_i\}_{i=1}^{\infty}$ for $\hs$, the following are equivalent:
\begin{enumerate}
	\item The two frames are woven.
	
	\item The two frames are weakly woven.
\end{enumerate}
\end{theorem}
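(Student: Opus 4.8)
The plan is to dispatch $(1)\Rightarrow(2)$ trivially and to obtain $(2)\Rightarrow(1)$ by proving its contrapositive, which reduces essentially to quoting Theorem 3.3. For $(1)\Rightarrow(2)$: if $\{f_i\}_{i=1}^\infty$ and $\{g_i\}_{i=1}^\infty$ are woven with universal bounds $A,B$, then for every $\sigma\subset\ns$ the weaving $\{f_i\}_{i\in\sigma}\cup\{g_i\}_{i\in\sigma^c}$ is in particular a frame (with bounds $A$ and $B$), so the two frames are weakly woven.

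For $(2)\Rightarrow(1)$ I would argue by contraposition: assume the two frames are not woven and produce a partition of $\ns$ whose weaving is not a frame, contradicting weak wovenness. First I would record the frame analogue of Proposition 2.2: if $B_1,B_2$ are Bessel bounds of $\{f_i\}$ and $\{g_i\}$, then every weaving is Bessel with bound $B_1+B_2$, so a universal \emph{upper} bound is automatic. Hence a failure of wovenness can only be the absence of a universal \emph{lower} bound, i.e.\ for every $A>0$ there is a subset $\sigma\subset\ns$ such that $\{f_i\}_{i\in\sigma}\cup\{g_i\}_{i\in\sigma^c}$ has lower frame bound strictly less than $A$ (assigning the value $0$ to any weaving that is not a frame).

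Next I would pass to the g-frame picture, identifying $f_i$ with $\Lambda_{i1}\colon\hs\to\cs$, $\Lambda_{i1}f=\langle f,f_i\rangle$, and $g_i$ with $\Lambda_{i2}$ likewise, so that $\|\Lambda_{ij}f\|^2$ equals the square modulus of the relevant frame coefficient and all frame/weaving inequalities are unchanged. Then I apply Theorem 3.3 with $m=2$, $I=\ns$, and $\tau_1=\tau_2=\emptyset$: its hypothesis---that for every $A>0$ there is a partition $\{\sigma_1,\sigma_2\}$ of $\ns$ for which $\{\Lambda_{ij}\}_{i\in\sigma_j,\,j\in[2]}$ has lower frame bound less than $A$---is precisely what was just established. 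Theorem 3.3 then yields a partition $\{\upsilon_1,\upsilon_2\}$ of $\ns$ for which $\{\Lambda_{ij}\}_{i\in\upsilon_j,\,j\in[2]}$, that is $\{f_i\}_{i\in\upsilon_1}\cup\{g_i\}_{i\in\upsilon_2}$, is not a frame. So the two frames are not weakly woven, giving the contrapositive.

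The main obstacle sits entirely inside Theorem 3.3, whose proof is the delicate iterative exhaustion argument: one takes bad partitions $\sigma^{(p)}$ witnessing lower bound $<1/p$ on unit vectors $f_p$, freezes their behaviour on growing finite blocks $I_1\cup\cdots\cup I_{k_p}$ using that the Bessel tails $\sum_{i\ge k}\|\Lambda_{ij}f_p\|^2\to0$, and glues the frozen pieces into a single partition $\{\upsilon_j\}$ whose weaving has arbitrarily small energy along the $f_p$, hence no lower bound. Granting Theorem 3.3, the only care needed in the present reduction is (i) observing that a breakdown of wovenness is necessarily a breakdown of the lower bound---handled by the automatic Bessel bound---and (ii) instantiating Theorem 3.3 with $m=2$ and empty $\tau_j$ so its hypothesis reads verbatim as the negation of wovenness. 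The same route, carried out with general $m$ and the operators $\Lambda_{ij}$ in place of $f_i,g_i$, also gives the g-frame version of the equivalence asserted in the abstract.
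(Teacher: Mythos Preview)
The paper does not supply its own proof of Theorem~3.6; it simply cites \cite{bemrose2015weaving}. So the relevant question is whether your route through Theorem~3.3 actually works.

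There is a genuine gap. The hypothesis of Theorem~3.3 is not ``for every $A>0$ there is a bad partition of $\ns$''; it is the stronger statement that \emph{for every} collection of disjoint finite sets $\{\tau_j\}_{j\in[m]}$ and every $A>0$ there is a partition $\{\sigma_j\}$ of $I\setminus(\tau_1\cup\cdots\cup\tau_m)$ whose combined weaving $\{\sigma_j\cup\tau_j\}$ has lower bound below $A$. You can see this both from the proof of Theorem~3.3, which repeatedly invokes the hypothesis with the growing $\tau_{pj}$'s it constructs at each stage, and from Corollary~3.4, whose contrapositive form (``there exist $\{\tau_j\}$ and $A>0$ \ldots'') matches the negation of a universal quantifier on $\{\tau_j\}$. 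Instantiating $\tau_1=\tau_2=\emptyset$ verifies only one instance of this hypothesis.

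Moreover, ``not woven'' does \emph{not} by itself imply the universal-$\tau$ hypothesis. It is perfectly possible that every weaving which assigns some fixed finite block according to $\{\tau_1,\tau_2\}$ has lower bound $\ge A_0>0$, while weavings that assign that block differently have arbitrarily small lower bound; then the frames are not woven, yet the hypothesis of Theorem~3.3 fails for that particular $\{\tau_1,\tau_2\}$. In Bemrose et al.\ this is handled by an additional lemma (their Lemma~4.4), which uses weak wovenness to show that if \emph{some} partition of a finite set has a uniform lower bound for all extensions, then \emph{every} partition of that finite set does---and hence the frames are woven. That step, or something equivalent, is the missing ingredient in your argument; without it the contrapositive reduction to Theorem~3.3 does not go through.
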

In fact, the above result is also satisfied in the case of g-frames.
\begin{theorem}
Two g-frames $\{\Lambda_i\}_{i\in \ns}$ and $\{\Gamma_i\}_{i\in\ns}$ for $\hs$ are woven if and only if they are weakly woven.
\end{theorem}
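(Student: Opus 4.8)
The plan is to dispatch the two implications separately, with essentially all the content residing in the converse, which reduces to Theorem 3.3.

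\smallskip
\noindent\textbf{The easy direction.} If $\{\Lambda_i\}_{i\in\ns}$ and $\{\Gamma_i\}_{i\in\ns}$ are woven, then by definition there are universal constants $A,B>0$ such that $\{\Lambda_i\}_{i\in\sigma}\cup\{\Gamma_i\}_{i\in\sigma^c}$ is a g-frame with bounds $A$ and $B$ for every $\sigma\subset\ns$; in particular each such weaving is a g-frame, which is exactly the assertion that the pair is weakly woven. So ``woven $\Rightarrow$ weakly woven'' is immediate, and the only real content is the converse.

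\smallskip
\noindent\textbf{The converse.} I would argue by contradiction. Assume $\{\Lambda_i\}_{i\in\ns}$ and $\{\Gamma_i\}_{i\in\ns}$ are weakly woven but not woven. By Proposition 2.2 a universal upper bound (namely $B_1+B_2$, where $B_1,B_2$ are upper bounds of $\{\Lambda_i\}_{i\in\ns}$ and $\{\Gamma_i\}_{i\in\ns}$) always exists, so the failure of wovenness can only be the failure of a universal lower bound. Weak wovenness guarantees that every weaving $\{\Lambda_i\}_{i\in\sigma}\cup\{\Gamma_i\}_{i\in\sigma^c}$ is a genuine g-frame, hence has a strictly positive optimal lower bound; the absence of a universal lower bound therefore says precisely that the infimum, over all partitions, of these optimal lower bounds equals $0$. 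Spelled out: for every $A>0$ there is a partition $\{\sigma_1,\sigma_2\}$ of $\ns$ and a unit vector $f\in\hs$ with
$$\sum_{i\in\sigma_1}\|\Lambda_i f\|^2+\sum_{i\in\sigma_2}\|\Gamma_i f\|^2<A,$$
that is, the weaving along $\{\sigma_1,\sigma_2\}$ has lower frame bound less than $A$.

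\smallskip
\noindent\textbf{Invoking Theorem 3.3.} Now I would apply Theorem 3.3 with $m=2$, index set $I=\ns$, first g-frame $\{\Lambda_i\}_{i\in\ns}$, second g-frame $\{\Gamma_i\}_{i\in\ns}$, and the distinguished finite sets taken to be empty, $\tau_1=\tau_2=\emptyset$ (so that $I\setminus(\tau_1\cup\tau_2)=\ns$). The statement displayed above is exactly the hypothesis of Theorem 3.3 for this data, so the theorem yields a partition $\{\upsilon_1,\upsilon_2\}$ of $\ns$ for which $\{\Lambda_i\}_{i\in\upsilon_1}\cup\{\Gamma_i\}_{i\in\upsilon_2}$ is not a g-frame for $\hs$. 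This contradicts weak wovenness, so the two g-frames must in fact be woven, finishing the proof. Since Theorem 3.3 has already carried out the delicate inductive construction, I expect no new analytic difficulty here; the only point requiring care is the bookkeeping in the middle step — using Proposition 2.2 to reduce ``not woven'' to ``no universal lower bound'' and then using weak wovenness to recast that as the quantitative hypothesis of Theorem 3.3.
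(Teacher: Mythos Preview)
Your reduction to Theorem~3.3 misreads its hypothesis. That hypothesis is not about a single fixed pair $(\tau_1,\tau_2)$: inspecting the proof of Theorem~3.3, the assumption is invoked at every stage $p$ with the nonempty finite sets $\tau_{pj}=\tau_{(p-1)j}\cup\big(\sigma_{(p-1)j}\cap(I_1\cup\cdots\cup I_{k_{p-1}})\big)$, and the contrapositive recorded as Corollary~3.4 confirms that the quantifier over the finite sets $\{\tau_j\}$ is universal. You have only verified the case $\tau_1=\tau_2=\emptyset$, and ``no universal lower bound'' by itself does not give the statement for an arbitrary disjoint finite pair $(\tau_1,\tau_2)$: nothing you have said rules out the possibility that all weavings agreeing with one fixed finite pattern $(\tau_1,\tau_2)$ share a uniform lower bound while weavings agreeing with a different finite pattern have arbitrarily small lower bound. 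Closing this gap is exactly the job of Lemma~4.4 of \cite{bemrose2015weaving} in the frame case (and weak wovenness is used there in an essential way); its g-frame analogue is not proved in this paper, so you cannot simply cite it.

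The paper's own argument takes a completely different route: it passes to the induced frames $u_{i,j}=\Lambda_i^*e_{i,j}$, $v_{i,j}=\Gamma_i^*e_{i,j}$ via Lemma~1.4 and then quotes the frame result, Theorem~3.6, as a black box, thereby outsourcing both the inductive construction and the missing bridging step to \cite{bemrose2015weaving}. If you want a self-contained direct argument in the g-frame setting, you must first establish the g-frame version of that bridging lemma before Theorem~3.3 can legitimately be applied.
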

\begin{proof}
The proof can be easily obtained by Lemma 1.3 and Theorem 3.6.
\end{proof}
\section{Weaving g-Riesz Bases}
In this section we classify when g-Riesz bases and g-Riesz basis sequences can be woven and consider the weaving equivalent of an unconditional g-basis for $\hs$.

The following result is an extension of Theorem 5.2 of \cite{bemrose2015weaving} to g-Riesz bases.
\begin{theorem}
Let $\{\Lambda_{i}\}_{i\in \ns}$ and $\{\Gamma_{i}\}_{i\in \ns}$ be two g-Riesz bases for which there are common constants $0<A\le B<\infty$ so that for every $\sigma$ of $\ns$, the family $\{\Lambda_{i}\}_{i\in \sigma}\cup\{\Gamma_{i}\}_{i\in \sigma^c}$ is a g-Riesz sequence with Riesz bounds $A$ and $B$. Then for every partition $\sigma\subset \ns$ the  $\{\Lambda_{i}\}_{i\in \sigma}\cup\{\Gamma_{i}\}_{i\in \sigma^c}$ is actually a g-Riesz basis, that is, the two g-Riesz bases are woven.
\end{theorem}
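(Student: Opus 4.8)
The plan is to reduce the statement to the g-completeness of each weaving and then transport the known Riesz-basis weaving theorem through the frame realization of g-frames. Write $\Psi_\sigma:=\{\Lambda_i\}_{i\in\sigma}\cup\{\Gamma_i\}_{i\in\sigma^{c}}$ and $V_\sigma:=\overline{\mathrm{span}}\big(\{\Lambda_i^{*}(\hs_i)\}_{i\in\sigma}\cup\{\Gamma_i^{*}(\hs_i)\}_{i\in\sigma^{c}}\big)$. By hypothesis $\Psi_\sigma$ obeys the two Riesz inequalities with the universal constants $A,B$, and that is exactly the assertion that $\Psi_\sigma$ is a g-Riesz basis for the subspace $V_\sigma$; hence $\Psi_\sigma$ is a g-Riesz basis for $\hs$ if and only if $V_\sigma=\hs$. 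Passing to orthogonal complements, $V_\sigma=\hs$ is equivalent to the statement that the only $f\in\hs$ with $\Lambda_i f=0$ for all $i\in\sigma$ and $\Gamma_i f=0$ for all $i\in\sigma^{c}$ is $f=0$. So everything comes down to g-completeness of every weaving; the universal upper bound plays no role here.

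The main step is to pass to the induced ordinary systems. Fix orthonormal bases $\{e_{i,j}\}_{j\in J_i}$ of the $\hs_i$ and set $u_{i,j}=\Lambda_i^{*}e_{i,j}$, $v_{i,j}=\Gamma_i^{*}e_{i,j}$; by Lemma 1.3, $\{u_{i,j}\}$ and $\{v_{i,j}\}$ are Riesz bases for $\hs$. I would first record the weaving analogue of Lemma 1.3: for each partition $\sigma$, $\Psi_\sigma$ is a g-Riesz sequence (resp.\ a g-Riesz basis) with bounds $A,B$ if and only if $\{u_{i,j}\}_{i\in\sigma,\,j\in J_i}\cup\{v_{i,j}\}_{i\in\sigma^{c},\,j\in J_i}$ is a Riesz sequence (resp.\ a Riesz basis) with the same bounds; the proof is word for word that of Lemma 1.3, since the defining inequalities agree term by term under $g_i\leftrightarrow\sum_{j}\langle g_i,e_{i,j}\rangle e_{i,j}$. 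Granting this, the hypothesis says that every weaving of the two Riesz bases $\{u_{i,j}\}$ and $\{v_{i,j}\}$ that keeps each block $\{(i,j):j\in J_i\}$ intact is a Riesz sequence with the universal bounds $A,B$, and the claim to be proved is that every such weaving is a Riesz basis. This is the content of Theorem 5.2 of \cite{bemrose2015weaving}, and the theorem then follows by reading that result back through the weaving analogue of Lemma 1.3.

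For a self-contained treatment — and to locate the genuine difficulty — one argues directly. When $\sigma$ is finite or cofinite the weaving differs from $\{\Gamma_i\}$, respectively $\{\Lambda_i\}$, in only finitely many indices, and a dimension count (deleting a block from a g-Riesz sequence lowers its closed span by the dimension of that block, while adjoining a block raises it by at most that much, and applying this in both directions forces the deficiency to be preserved) shows it remains g-complete. For a general $\sigma$, suppose $V_\sigma\ne\hs$ and fix a unit vector $f\in V_\sigma^{\perp}$; since $\{\Lambda_i\}$ and $\{\Gamma_i\}$ are g-frames with lower bounds $A_\Lambda,A_\Gamma>0$, we get $\sum_{i\in\sigma^{c}}\|\Lambda_i f\|^{2}=\sum_{i}\|\Lambda_i f\|^{2}\ge A_\Lambda$ and $\sum_{i\in\sigma}\|\Gamma_i f\|^{2}=\sum_{i}\|\Gamma_i f\|^{2}\ge A_\Gamma$. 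Expanding $f$ through the canonical dual of $\{\Lambda_i\}$ restricted to $\sigma^{c}$ and through the canonical dual of $\{\Gamma_i\}$ restricted to $\sigma$, and truncating to finite index sets, one should manufacture a finitely supported coefficient vector for a suitable partition that contradicts the universal lower Riesz bound $A$. This last step — converting a hypothetical vector in $V_\sigma^{\perp}$ into a quantitative, $\sigma$-uniform near--linear dependence inside a finite sub-weaving — is where the universal bounds $A,B$ are indispensable and is the main obstacle; the argument of \cite{bemrose2015weaving} has to be followed carefully here.
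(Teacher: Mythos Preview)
Your reduction to ordinary Riesz bases via Lemma~1.3 is sound, but the appeal to Theorem~5.2 of \cite{bemrose2015weaving} as a black box is not: that theorem assumes the Riesz sequence property for \emph{every} partition of the index set $\{(i,j):i\in\ns,\ j\in J_i\}$, whereas your hypothesis only supplies it for the block-preserving partitions coming from $\sigma\subset\ns$. You are right that the \emph{proof} of Theorem~5.2 never leaves the block level --- each inductive step swaps a single index $i_0$ --- so it does go through under the weaker hypothesis; but verifying this is exactly what the paper does by running the argument directly in the g-setting, so nothing is gained by the detour.

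In the self-contained part, your finite-$\sigma$ dimension count is the right intuition, though it needs care when $\dim\hs_{i_0}=\infty$; the paper's formulation avoids arithmetic with dimensions by arguing: if the $\sigma$-weaving fails to span, then $\Gamma_{i_0}^{*}(\hs_{i_0})$ lies outside its closed span, so adjoining $\Gamma_{i_0}$ still yields a g-Riesz sequence, and the $\sigma_1$-weaving ($\sigma_1=\sigma\setminus\{i_0\}$) is obtained from this larger g-Riesz sequence by deleting $\Lambda_{i_0}$ --- hence cannot span $\hs$, contradicting the inductive hypothesis. For infinite $\sigma$, however, your proposed route through canonical duals and truncated coefficient vectors is the wrong track, and you yourself flag it as the unresolved obstacle. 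The paper's argument is much simpler and uses only the frame inequality: once the finite case is in hand, every weaving with finite $\sigma_1$ is a g-Riesz basis, hence a g-frame with lower bound $A$. Given a nonzero $f\in V_\sigma^{\perp}$, pick a finite $\sigma_1\subset\sigma$ with $\sum_{i\in\sigma\setminus\sigma_1}\|\Gamma_i f\|^2<\tfrac{A}{2}\|f\|^2$ (tail of a Bessel series). The $\sigma_1$-weaving then gives
\[
A\|f\|^2\le\sum_{i\in\sigma_1}\|\Lambda_i f\|^2+\sum_{i\in\sigma\setminus\sigma_1}\|\Gamma_i f\|^2+\sum_{i\in\sigma^{c}}\|\Gamma_i f\|^2,
\]
and since $f\perp V_\sigma$ forces $\Lambda_i f=0$ for $i\in\sigma\supset\sigma_1$ and $\Gamma_i f=0$ for $i\in\sigma^{c}$, the first and third sums vanish, yielding $A\|f\|^2<\tfrac{A}{2}\|f\|^2$. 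No dual expansions are needed --- just the lower frame bound of the finite weaving and a Bessel tail.
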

\begin{proof}
We proceed by induction on the cardinality of $\sigma$. \\
First, we assume that $|\sigma|<\infty$.  The case $|\sigma|=0$ being obvious, we assume the result holds for every $\sigma$ with $|\sigma|=k$.

Now let $\sigma\subset \ns$ with $|\sigma|=k+1$ and choose $i_0\in\sigma$. Let $\sigma_1=\sigma\setminus\{i_0\}$, then $\{\Lambda_{i}\}_{i\in \sigma_1}\cup\{\Gamma_{i}\}_{i\in \sigma_1^c}$ is a g-Riesz basis by the induction hypothesis.

We proceed by way if contradiction assume that $\{\Lambda_{i}\}_{i\in \sigma}\cup\{\Gamma_{i}\}_{i\in \sigma^c}$ is not a g-Riesz basis. However, it is at least a g-Riesz sequence by assumption. For any $g\in\hs_{i_0}$, if
$$\Gamma_{i_0}^*(g)\in {\rm span}\big(\{\Lambda^*_{i}(\hs_i)\}_{i\in \sigma}\cup\{\Gamma^*_{i}(\hs_i)\}_{i\in \sigma^c}\big),$$ 
then 
$${\rm\overline{span}}\big(\{\Lambda^*_{i}(\hs_i)\}_{i\in \sigma}\cup\{\Gamma^*_{i}(\hs_i)\}_{i\in \sigma^c}\big)\supset{\rm\overline{span}}\big(\{\Lambda^*_{i}(\hs_i)\}_{i\in \sigma_1}\cup\{\Gamma^*_{i}(\hs_i)\}_{i\in \sigma_1^c}\big)=\hs,$$
i.e., $\{\Lambda_{i}\}_{i\in \sigma}\cup\{\Gamma_{i}\}_{i\in \sigma^c}$ would be a g-basis, which is assumed to not be the case. So it must be that 
$$\Gamma_{i_0}^*(g)\notin {\rm span}\big(\{\Lambda^*_{i}(\hs_i)\}_{i\in \sigma}\cup\{\Gamma^*_{i}(\hs_i)\}_{i\in \sigma^c}\big)$$ 
from which it follows that
$$\{\Lambda_{i}\}_{i\in \sigma}\cup\{\Gamma_{i}\}_{i\in \sigma^c}\cup\{\Gamma_{i_0}\}$$
is a g-Riesz sequence in $\hs$. Hence, because $\sigma_1^c=\sigma^c\cup\{i_0\}$, 
$$\{\Lambda_{i}\}_{i\in \sigma_1}\cup\{\Gamma_{i}\}_{i\in \sigma_1^c}$$
cannot be a g-Riesz basis, since we obtained it by deleting the element $\Gamma_{i_0}$ from a g-Riesz sequence, which leads to a contradiction. 

Next, by way of contradiction, assume there is a $\sigma\subset\ns$ with both $\sigma$ and $\sigma^c$ infinite, so that
$$\widetilde{\hs}={\rm\overline{span}}\big(\{\Lambda^*_{i}(\hs_i)\}_{i\in \sigma}\cup\{\Gamma^*_{i}(\hs_i)\}_{i\in \sigma^c}\big)\ne \hs.$$
Choose a nonzero $f\in\widetilde{\hs}^{\perp}$. Since $\{\Gamma_i\}_{i\in I}$ is g-Bessel sequence, by taking the tail the series, there exists a $\sigma_1\subset\sigma$ with $|\sigma_1|<\infty$ and 
$$\sum_{i\in \sigma\setminus\sigma_1}\|\Gamma_{i}f\|^2<\frac{A}{2}\|f\|^2.$$ From the first part of proof, the family 
$$\{\Lambda_{i}\}_{i\in\sigma_1}\cup\{\Gamma_i\}_{i\in\sigma\setminus\sigma_1}\cup\{\Gamma_i\}_{i\in\sigma^c}$$
is a g-Riesz basis with bounds $A$, $B$ and therefore 
\begin{align*}
	A\|f\|^2&\le \sum_{i \in  \sigma_1}\|\Lambda_if\|^2+\sum_{i \in  \sigma\setminus\sigma_1}\|\Gamma_if\|^2+\sum_{i \in  \sigma^c}\|\Lambda_if\|^2\\
	&< \frac{A}{2}\|f\|^2,
\end{align*}
giving a contradiction.
\end{proof}
By extending the Theorem 5.3 of \cite{bemrose2015weaving}, we show that if two g-Riesz bases are woven, then every weaving is in fact a g-Riesz basis, and not just a g-frame.
\begin{theorem}
Suppose $\{\Lambda_{i}\}_{i\in \ns}$ and $\{\Gamma_{i}\}_{i\in \ns}$ are g-Riesz bases and that there is a common constant $A>0$ so that for every $\sigma\subset \ns$, the family $\{\Lambda_{i}\}_{i\in \sigma}\cup\{\Gamma_{i}\}_{i\in \sigma^c}$ is a g-frame with lower frame bound $A$. Then for every $\sigma\subset \ns$, the family $\{\Lambda_{i}\}_{i\in \sigma}\cup\{\Gamma_{i}\}_{i\in \sigma^c}$ is actually a g-Riesz basis.
\end{theorem}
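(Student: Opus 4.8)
The plan is to reduce the statement to an injectivity assertion for each weaving and then argue by induction on the size of the partition, mimicking the proof of Theorem 4.1. Fix $\sigma\subset\ns$ and set $W_\sigma=\{\Lambda_{i}\}_{i\in \sigma}\cup\{\Gamma_{i}\}_{i\in \sigma^c}$. By hypothesis $W_\sigma$ is a g-frame, hence it is g-complete and its synthesis operator $T\colon\oplus_{i\in\ns}\hs_i\to\hs$, $T\{f_i\}=\sum_{i\in\sigma}\Lambda_i^*f_i+\sum_{i\in\sigma^c}\Gamma_i^*f_i$, is surjective; if in addition $T$ is injective, then $T$ is a bounded bijection with bounded inverse, and the resulting two-sided bound $\|T^{-1}\|^{-2}\sum_i\|f_i\|^2\le\|T\{f_i\}\|^2\le\|T\|^2\sum_i\|f_i\|^2$, together with g-completeness, shows that $W_\sigma$ is a g-Riesz basis. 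So it suffices to prove $\ker T=\{0\}$ for every $\sigma$. Suppose not: there is a nonzero $\{g_i\}_{i\in\sigma}\cup\{h_i\}_{i\in\sigma^c}\in\oplus_{i\in\ns}\hs_i$ with $\sum_{i\in\sigma}\Lambda_i^*g_i+\sum_{i\in\sigma^c}\Gamma_i^*h_i=0$. Because $\{\Lambda_{i}\}_{i\in \ns}$ and $\{\Gamma_{i}\}_{i\in \ns}$ are g-Riesz bases, every subfamily of either is a g-Riesz sequence and thus has injective synthesis operator; hence neither $\{g_i\}_{i\in\sigma}$ nor $\{h_i\}_{i\in\sigma^c}$ vanishes, and $v:=\sum_{i\in\sigma}\Lambda_i^*g_i=-\sum_{i\in\sigma^c}\Gamma_i^*h_i$ is a nonzero vector, which we may normalise to $\|v\|=1$.

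I would first handle $|\sigma|<\infty$ by induction on $|\sigma|$. The base case $\sigma=\emptyset$ gives $W_\emptyset=\{\Gamma_i\}_{i\in\ns}$, already a g-Riesz basis. For the inductive step, let $|\sigma|=k+1$, choose $i_0\in\sigma$ with $g_{i_0}\neq0$, and put $\sigma_1=\sigma\setminus\{i_0\}$; by the inductive hypothesis $W_{\sigma_1}$ is a g-Riesz basis. Writing $M:={\rm\overline{span}}\big(\{\Lambda_i^*(\hs_i)\}_{i\in\sigma_1}\cup\{\Gamma_i^*(\hs_i)\}_{i\in\sigma^c}\big)$, this means $\hs$ is the topological direct sum of $M$ and $\Gamma_{i_0}^*(\hs_{i_0})$, so the codimension of $M$ in $\hs$ equals $\dim\hs_{i_0}$. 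On the other hand, the kernel relation can be rewritten as $\Lambda_{i_0}^*g_{i_0}=-\sum_{i\in\sigma_1}\Lambda_i^*g_i-\sum_{i\in\sigma^c}\Gamma_i^*h_i\in M$, and $g_{i_0}\neq0$, so $M\cap\Lambda_{i_0}^*(\hs_{i_0})\neq\{0\}$; meanwhile $W_\sigma$ being a g-frame forces $M+\Lambda_{i_0}^*(\hs_{i_0})=\hs$, so $\Lambda_{i_0}^*(\hs_{i_0})$ maps onto $\hs/M$ with a nontrivial kernel. Comparing the two resulting descriptions of $\hs/M$ is contradictory, and therefore $W_\sigma$ is a g-Riesz basis for every finite $\sigma$.

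For infinite $\sigma$ I would bootstrap off the finite case (after exchanging the roles of $\{\Lambda_i\}$ and $\{\Gamma_i\}$ if $\sigma^c$ is finite, we may assume both $\sigma$ and $\sigma^c$ are infinite). Keep $v\neq0$ as above. Given $\varepsilon>0$, the convergence of $\sum_{i\in\sigma}\Lambda_i^*g_i$ yields a finite set $F\subset\sigma$ with $\|v-v_F\|<\varepsilon$, where $v_F:=\sum_{i\in F}\Lambda_i^*g_i$. The family $W_F:=\{\Lambda_i\}_{i\in F}\cup\{\Gamma_i\}_{i\in\ns\setminus F}$ is a g-frame with lower frame bound $A$ by hypothesis and, $F$ being finite, a g-Riesz basis by the previous paragraph; since for a g-Riesz basis the lower Riesz bound equals the lower frame bound, $\hs$ is the topological direct sum of $U_F:={\rm\overline{span}}\{\Lambda_i^*(\hs_i)\}_{i\in F}$ and $V_F:={\rm\overline{span}}\{\Gamma_i^*(\hs_i)\}_{i\in\ns\setminus F}$, and the projection $Q$ onto $V_F$ along $U_F$ satisfies $\|Q\|\le\sqrt{B_\Gamma/A}$, where $B_\Gamma$ is the fixed upper Riesz bound of $\{\Gamma_i\}_{i\in\ns}$. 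But $v_F\in U_F$ gives $Qv_F=0$, while $v\in{\rm\overline{span}}\{\Gamma_i^*(\hs_i)\}_{i\in\sigma^c}\subset V_F$ (as $F\subset\sigma$) gives $Qv=v$; hence $1=\|v\|=\|Q(v-v_F)\|\le\sqrt{B_\Gamma/A}\,\|v-v_F\|<\sqrt{B_\Gamma/A}\,\varepsilon$, which is absurd once $\varepsilon<\sqrt{A/B_\Gamma}$. Thus $W_\sigma$ is a g-Riesz basis for every $\sigma$.

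The step I expect to be the \emph{main obstacle} is the codimension comparison that closes the finite-$\sigma$ induction, i.e.\ ruling out $M\cap\Lambda_{i_0}^*(\hs_{i_0})\neq\{0\}$: the rest is essentially bookkeeping, but here the block structure of g-frames genuinely enters. The argument is transparent when the fibre $\hs_{i_0}$ is finite-dimensional, and in particular for ordinary frames, where $\dim\hs_{i_0}=1$ and the statement specialises to Theorem 5.3 of \cite{bemrose2015weaving}; in general it must be treated with more care. Note also that one cannot simply invoke that theorem through the induced frames of Lemma 1.3, because weaving the g-frames $\{\Lambda_i\},\{\Gamma_i\}$ corresponds only to a block-weaving of the associated induced Riesz bases, not to an arbitrary weaving of them.
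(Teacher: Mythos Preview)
Your plan mirrors the paper's proof in structure: induction on $|\sigma|$ for finite $\sigma$, followed by a limiting argument built on the finite case for infinite $\sigma$. The infinite-$\sigma$ step is handled slightly differently---the paper takes finite $\sigma_j\nearrow\sigma$ and applies the lower Riesz inequality of the g-Riesz basis $W_{\sigma_j}$ directly to the vector $\sum_{i\in\sigma_j}\Lambda_i^*g_i+\sum_{i\in\sigma^c}\Gamma_i^*g_i=-\sum_{i\in\sigma\setminus\sigma_j}\Lambda_i^*g_i\to 0$ to force every $g_i=0$---but your oblique-projection variant is equally valid and makes the role of the uniform bound $A$ more explicit.

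The point you flag as the main obstacle is exactly where the paper diverges from your argument. You try to close the finite-$\sigma$ induction by comparing two descriptions of $\hs/M$, and you are right that this is only immediately conclusive when $\dim\hs_{i_0}<\infty$: an infinite-dimensional Hilbert space can be isomorphic to a proper quotient of itself, so the surjection $\Lambda_{i_0}^*(\hs_{i_0})\twoheadrightarrow\hs/M$ having nontrivial kernel is not automatically contradictory. The paper does not go down this road at all. Having observed that removing $\Lambda_{i_0}$ from the g-frame $W_\sigma$ leaves a family that fails to span $\hs$ (it is a subfamily of the g-Riesz basis $W_{\sigma_1}$ missing the block $\Gamma_{i_0}$, hence a g-Riesz sequence with proper closed span), the paper simply invokes a characterisation from \cite{zhu2008} to conclude that $W_\sigma$ is a g-Riesz basis. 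In other words, the paper offloads precisely your obstacle to an external reference rather than resolving it by hand; if you want a fully self-contained argument you will still have to supply the infinite-fibre case yourself, and your observation that Lemma~1.4 does not reduce the question to the classical frame statement (because g-weavings correspond only to block-weavings of the induced Riesz bases) is on point.
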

\begin{proof}
First, assume $|\sigma|<\infty$, we do the proof by induction on $|\sigma|$ with $|\sigma|=0$ clear. Now we assume the result holds for every $\sigma$ with $|\sigma|=n$. Let $\sigma\subset\ns $ be so that $|\sigma|=n+1$ and let $i_0\in \sigma$. Then $\{\Lambda_{i}\}_{i\in \sigma\setminus\{i_0\}}\cup\{\Gamma_{i}\}_{i\in \sigma^c\cup\{i_0\}}$ is a g-Riesz basis and therefore
$$\{\Lambda_{i}\}_{i\in \sigma\setminus\{i_0\}}\cup\{\Gamma_{i}\}_{i\in \sigma^c}$$
is a g-Riesz sequence spanning a subspace of co-dimension at least one.\\
Now, by assumption, $\{\Lambda_{i}\}_{i\in \sigma}\cup\{\Gamma_{i}\}_{i\in \sigma^c}$ is at least a g-frame. Since the removal of the single vector $\Gamma_{i_0}$ yields a set that does not longer span $\hs$, $\{\Lambda_{i}\}_{i\in \sigma}\cup\{\Gamma_{i}\}_{i\in \sigma^c}$ must actually be a g-Riesz basis \cite{zhu2008}. Furthermore, its lower bound is $A$.

Next, Let $|\sigma|=\infty$. By choosing $\sigma_1\subset\sigma_2\subset\cdots\subset\sigma$ such that
$$\sigma=\bigcup_{j=1}^{\infty}\sigma_j,$$
and $|\sigma_j|<\infty$. Now, for every $j=1,2,\cdots$ the family
$$\{\Lambda_{i}\}_{i\in \sigma_j}\cup\{\Gamma_{i}\}_{i\in \sigma\setminus\sigma_j}\cup\{\Gamma_{i}\}_{i\in\sigma^c}=\{\Lambda_{i}\}_{i\in \sigma_j}\cup\{\Gamma_{i}\}_{i\in\sigma_j^c}$$
is a g-Riesz basis with lower bound $A$. If $\{g_i\}_{i=1}^{\infty}\in \oplus_{i\in I}\hs_i$ and
$$\sum_{i\in\sigma}\Lambda^*g_i+\sum_{i\in\sigma^c}\Gamma^*g_i=0,$$
then
\begin{align*}
	0&=\|\sum_{i\in\sigma}\Lambda^*g_i+\sum_{i\in\sigma^c}\Gamma^*g_i\|^2=\lim_{j\rightarrow\infty}\|\sum_{i\in\sigma}\Lambda^*g_i+\sum_{i\in\sigma^c}\Gamma^*g_i\|^2\\
	&\ge \lim_{j\rightarrow\infty}A\big(\sum_{i \in  \sigma_j}|a_i|^2+\sum_{i \in  \sigma^c_j}|a_i|^2\big)
\end{align*}
where the last inequality follows from the g-Riesz basis property of $\{\Lambda_{i}\}_{i\in \sigma_j}\cup\{\Gamma_{i}\}_{i\in \sigma_j^c}$. So $g_i=0$ for every $i=1,2,\cdots,$ implying that the synthesis operator for the family $\{\Lambda_{i}\}_{i\in \sigma}\cup\{\Gamma_{i}\}_{i\in \sigma^c}$ is bounded, linear, onto, and by the above it is also one-to-one. Therefore, it is invertible and so the family $\{\Lambda_{i}\}_{i\in \sigma}\cup\{\Gamma_{i}\}_{i\in \sigma^c}$ is a g-Riesz basis.
\end{proof}
The following result by extending Theorem 5.4 of \cite{bemrose2015weaving} says that a g-frame (which is not a g-Riesz basis) cannot be woven with a g-Riesz basis.
\begin{theorem}
Let $\Lambda=\{\Lambda_i\}_{i\in\ns}$ be a g-Riesz basis and let $\Gamma=\{\Gamma_i\}_{i\in\ns}$ be a g-frame for $\hs$. If $\Lambda$ and $\Gamma$ are woven, then $\Gamma$ must actually be a g-Riesz basis.
\end{theorem}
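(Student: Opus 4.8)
\noindent\emph{Proof proposal.} The plan is to argue by contradiction. Suppose $\Gamma=\{\Gamma_i\}_{i\in\ns}$ is a g-frame for $\hs$ that is \emph{not} a g-Riesz basis, while $\Lambda$ and $\Gamma$ are woven with universal lower frame bound $A>0$ and $\Gamma$ is g-Bessel with bound $B$. I will produce, for each finite $F\subset\ns$, a vector lying in ``the part of $\hs$ invisible to $\Lambda$ off $F$'' whose $\Gamma$-analysis over $F$ is far too small, and then let $F$ grow. First I record what it means for $\Gamma$ not to be a g-Riesz basis: applying Lemma 1.3 to $\Gamma$ and its induced frame $\{v_{i,j}=\Gamma_i^*e_{i,j}\}$, together with the fact that a frame is a Riesz basis precisely when its synthesis operator is injective, this is equivalent to $T_\Gamma$ not being injective. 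Hence there is $\{h_i\}_{i\in\ns}\in\oplus_{i\in\ns}\hs_i$ with $\{h_i\}\neq 0$ and $\sum_{i\in\ns}\Gamma_i^*h_i=0$; after normalising, $\sum_i\|h_i\|^2=1$, and since $\Gamma$ is g-Bessel the tails satisfy
$$\Big\|\sum_{i\in F}\Gamma_i^*h_i\Big\|=\Big\|\sum_{i\notin F}\Gamma_i^*h_i\Big\|\le \sqrt{B}\,\Big(\sum_{i\notin F}\|h_i\|^2\Big)^{1/2}\to 0,\qquad \sum_{i\in F}\|h_i\|^2\to 1$$
as $F\uparrow\ns$.

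For a finite $F\subset\ns$ set $W_F=\{f\in\hs:\Lambda_if=0\text{ for all }i\notin F\}$. Since $\Lambda$ is a g-Riesz basis its analysis operator $T_\Lambda^*$ is an isomorphism of $\hs$ onto $\oplus_{i\in\ns}\hs_i$, so $W_F=(T_\Lambda^*)^{-1}\!\big(\oplus_{i\in F}\hs_i\big)$ and $T_\Lambda^*$ carries $W_F$ isomorphically onto $\oplus_{i\in F}\hs_i$; in particular $\dim W_F=\dim\big(\oplus_{i\in F}\hs_i\big)$. Because $\{\Lambda_i\}_{i\in\ns\setminus F}\cup\{\Gamma_i\}_{i\in F}$ is a weaving of $\Lambda$ and $\Gamma$, it is a g-frame with lower bound $A$, and restricting its lower frame inequality to $f\in W_F$ annihilates every $\Lambda$-term, so the operator $R_F\colon W_F\to\oplus_{i\in F}\hs_i$, $R_Ff=\{\Gamma_if\}_{i\in F}$, is bounded below by $\sqrt A$ (in particular injective). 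Its adjoint is $R_F^*\{g_i\}_{i\in F}=P_{W_F}\sum_{i\in F}\Gamma_i^*g_i$, where $P_{W_F}$ is the orthogonal projection onto $W_F$, and $\|R_F^*\{h_i\}_{i\in F}\|\le\big\|\sum_{i\in F}\Gamma_i^*h_i\big\|=\big\|\sum_{i\notin F}\Gamma_i^*h_i\big\|$.

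Now take, as the main case, each $\hs_i$ finite dimensional, so $\oplus_{i\in F}\hs_i$ is finite dimensional and the injective map $R_F$ is invertible with $\|R_F^{-1}\|\le A^{-1/2}$. Put $f_F=R_F^{-1}\{h_i\}_{i\in F}\in W_F$. On the one hand $\|f_F\|\le A^{-1/2}\|\{h_i\}_{i\in F}\|\le A^{-1/2}$. On the other hand
$$\sum_{i\in F}\|h_i\|^2=\big\langle R_Ff_F,\ \{h_i\}_{i\in F}\big\rangle=\big\langle f_F,\ R_F^*\{h_i\}_{i\in F}\big\rangle\le\|f_F\|\cdot\Big\|\sum_{i\notin F}\Gamma_i^*h_i\Big\|,$$
so $\|f_F\|\ge\big(\sum_{i\in F}\|h_i\|^2\big)\big/\big\|\sum_{i\notin F}\Gamma_i^*h_i\big\|\to\infty$ as $F\uparrow\ns$, the numerator tending to $1$ and the denominator to $0$. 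This contradicts $\|f_F\|\le A^{-1/2}$, so $\Gamma$ must be a g-Riesz basis. (Equivalently, normalising $f_F$ yields weavings $\{\Lambda_i\}_{i\in\ns\setminus F}\cup\{\Gamma_i\}_{i\in F}$ whose lower frame bounds tend to $0$ — already impossible for woven g-frames — and one may then invoke Theorem 3.3 with empty cores to manufacture a single partition for which the weaving is not a g-frame.)

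The step I expect to be the main obstacle is the passage from ``$R_F$ injective'' to ``$\{h_i\}_{i\in F}$ lies in the range of $R_F$, so $f_F$ exists'': this is automatic from the dimension count only when $\oplus_{i\in F}\hs_i$ is finite dimensional. For general $\hs_i$ a bounded-below injective $R_F$ may have proper closed range, and the estimates above then only force $\{h_i\}_{i\in F}$ to become asymptotically orthogonal to $\mathrm{ran}(R_F)$, which is not in itself a contradiction. To reach that generality I would instead work with the induced frames $\{u_{i,j}=\Lambda_i^*e_{i,j}\}$ (a Riesz basis, by Lemma 1.3) and $\{v_{i,j}=\Gamma_i^*e_{i,j}\}$ (a frame), pass to the scalar setting, and adapt the proof of Theorem 5.4 of \cite{bemrose2015weaving}; here one must be careful that a weaving of the g-frames $\Lambda,\Gamma$ corresponds only to a weaving of $\{u_{i,j}\}$ and $\{v_{i,j}\}$ along the \emph{block} partitions of the index set $\{(i,j)\}$, so the scalar theorem cannot be cited verbatim and its argument has to be re-run using those partitions only, or the tail construction of Theorem 3.3 carried out directly.
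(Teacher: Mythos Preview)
Your argument is correct under the finite-dimensionality hypothesis you flag, and it takes a genuinely different route from the paper. The paper first reduces, without loss of generality, to $\Lambda$ being a g-orthonormal basis, then argues geometrically: from the failure of $\Gamma$ to be a g-Riesz basis it extracts an index (relabelled $i=1$) and some $g\in\hs_1$ with $\Gamma_1^*g\in\overline{\mathrm{span}}\{\Gamma_i^*(\hs_i)\}_{i\ge 2}$, chooses $n$ so that $\Gamma_1^*g$ is $\varepsilon$-close to the finite span over $2\le i\le n$, and uses a dimension count to find a unit $f\in\mathrm{span}\{\Lambda_i^*(\hs_i)\}_{i\le n}\cap\big[\mathrm{span}\{\Gamma_i^*(\hs_i)\}_{2\le i\le n}\big]^{\perp}$; taking $\sigma^c=[n]$ then kills the $\Lambda$-part of the weaving and leaves only $\|\Gamma_1 f\|^2$, claimed to be $\le\varepsilon$. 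Your approach is more operator-theoretic --- you work directly with a null element of $T_\Gamma$ and invert the restricted analysis map $R_F$ --- and it avoids the reduction to g-orthonormality. Both proofs ultimately rely on a finite-dimensional coincidence of spaces: the paper's codimension count (written as ``codimension $n-1$'' and ``dimension $n$'', i.e.\ as if $\dim\hs_i=1$) needs $\dim\hs_i<\infty$ just as your invertibility of $R_F$ does, and the paper's final bound $\|\Gamma_1 f\|^2\le\varepsilon$ is in fact only a bound on $|\langle\Gamma_1 f,g\rangle|$, so without $\dim\hs_1=1$ it too is incomplete. Your write-up is more honest about where the dimension hypothesis enters, and your proposed fallback via the induced frames together with the block-partition caveat is the correct way to close the gap in general.
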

\begin{proof}
Without loss of generality, assume that $\Lambda$ is an g-orthonormal basis. By way of contradiction, assume that $\Gamma$ is not a g-Riesz basis. It may be assumed that $\Gamma^*_1(g)\in {\rm\overline{span}}\{\Gamma^*_i(\hs_i)\}_{i\ne 1,i\in\ns}$ for any $g\in\hs_1$. Now, choose $n\in\ns$ such that
$$0\le d(\Gamma^*_1(g),{\rm\overline{span}}\{\Gamma^*_i(\hs_i)\}_{i=2}^n)\le \varepsilon$$
and let 
$$\widetilde{\hs}_n=\big[{\rm\overline{span}}\{\Gamma^*_i(\hs_i)\}_{i=2}^n\big]^{\perp}.$$
Then $\widetilde{\hs}_n$ has co-dimension at most $n-1$ in $\hs$ and since $\Lambda$ is an g-orthonormal basis,
$$\dim {\rm{span}}\{\Lambda^*_i(\hs_i)\}_{i=1}^n=n.$$
So there exists  $f\in {\rm{span}}\{\Lambda^*_i(\hs_i)\}_{i=1}^n\cap\widetilde{\hs}_n$ with $\|f\|=1$. Now, if $\sigma^c=[n]$ then 
$$\sum_{i \in  \sigma}\|\Lambda_{i}f\|^2=0,$$
while
$$\sum_{i \in  \sigma^c}\|\Gamma_{i}f\|^2=\|\Gamma_{1}f\|^2\le \varepsilon.$$
So these two families are not woven.
\end{proof}
The next result gives a necessary and sufficient condition such that a g-frame and a nonidentical recoding of itself can be woven.
\begin{proposition}
If $\{\Lambda_{i}\}_{i\in I}$ is a g-Riesz basis with bounds $A, B$ and $\pi$ is a permutation of $I$, then for every $\sigma\subset I$ the family $\{\Lambda_{i}\}_{i\in\sigma}\cup\{\Lambda_{\pi{(i)}}\}_{i\in\sigma^c}$ is a g-frame sequence with bounds $A$ and $2B$. Moreover, $\{\Lambda_i\}_{i\in I}$ and $\{\Lambda_{\pi(i)}\}_{i\in I}$ are woven if and only if $\pi(i)=i$ for all $i\in I$.
\end{proposition}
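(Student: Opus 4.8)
The plan is to view the weaving family
$\Psi_\sigma:=\{\Lambda_i\}_{i\in\sigma}\cup\{\Lambda_{\pi(i)}\}_{i\in\sigma^c}$
as a sub-collection, possibly with repetitions, of the g-Riesz basis $\{\Lambda_i\}_{i\in I}$, and then transfer the needed estimates through the g-Riesz/Riesz correspondence of Lemma 1.3. The combinatorial heart of the argument is the following bookkeeping: since $\pi$ is a bijection of $I$, as a collection of operators (counted with multiplicity) $\Psi_\sigma$ is exactly $\{\Lambda_k:k\in\sigma\cup\pi(\sigma^c)\}$; an index $k$ is \emph{omitted} from $\Psi_\sigma$ precisely when $k\in\sigma^c$ and $\pi^{-1}(k)\in\sigma$, and it occurs \emph{twice} precisely when $k\in\sigma$ and $\pi^{-1}(k)\in\sigma^c$. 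In particular $\sigma\cup\pi(\sigma^c)=I$ if and only if $\pi(\sigma)\subseteq\sigma$.

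For the first assertion, let $N_\sigma=\{k\in\sigma^c:\pi^{-1}(k)\in\sigma\}$ be the set of omitted indices, and note that the closed span of the adjoint ranges of $\Psi_\sigma$ is $V_\sigma:=\overline{\mathrm{span}}\{\Lambda^*_k(\hs_k):k\in I\setminus N_\sigma\}$, because the repeated operators add nothing to the span. The upper bound is immediate: for every $f\in\hs$,
$$\sum_{i\in\sigma}\|\Lambda_if\|^2+\sum_{i\in\sigma^c}\|\Lambda_{\pi(i)}f\|^2\le\sum_{k\in I}\|\Lambda_kf\|^2+\sum_{k\in\pi(\sigma^c)}\|\Lambda_kf\|^2\le 2B\|f\|^2 ,$$
using $\sigma\subset I$ and that $\pi$ restricts to a bijection of $\sigma^c$ onto $\pi(\sigma^c)\subset I$. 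For the lower bound, $\{\Lambda_k\}_{k\in I\setminus N_\sigma}$ is a sub-collection of the g-Riesz basis $\{\Lambda_k\}_{k\in I}$, hence a g-Riesz sequence with lower bound at least $A$ (cf. Lemma 1.3 together with the classical fact that a sub-collection of a Riesz basis with bounds $A,B$ is a Riesz sequence with bounds at least as good), and therefore a g-frame for its closed span $V_\sigma$ with lower g-frame bound at least $A$. Since the g-Bessel sum of $\Psi_\sigma$ at any $f$ dominates $\sum_{k\in I\setminus N_\sigma}\|\Lambda_kf\|^2$, we conclude that $\Psi_\sigma$ is a g-frame sequence with bounds $A$ and $2B$.

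For the ``moreover'' part, one implication is trivial: if $\pi(i)=i$ for every $i\in I$, then $\Psi_\sigma=\{\Lambda_i\}_{i\in I}$ for each $\sigma$, which is a g-frame with universal bounds $A$ and $B$, so the two families are woven. For the converse, suppose $\pi\ne\mathrm{id}$ and fix $i_0$ with $\pi(i_0)\ne i_0$. Take the partition $\sigma=\{i_0\}$, $\sigma^c=I\setminus\{i_0\}$. Then $\Lambda_{\pi(i_0)}$ occurs in $\Psi_\sigma$ neither as $\Lambda_{i_0}$ (since $\pi(i_0)\ne i_0$) nor among $\{\Lambda_{\pi(i)}:i\ne i_0\}$ (since $\pi$ is injective), so $\pi(i_0)\in N_\sigma$ and hence $V_\sigma=\overline{\mathrm{span}}\{\Lambda^*_k(\hs_k):k\ne\pi(i_0)\}$. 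Deleting the nonzero subspace $\Lambda^*_{\pi(i_0)}(\hs_{\pi(i_0)})$ from a g-Riesz basis destroys g-completeness (again via Lemma 1.3: removing even one vector from a Riesz basis leaves a proper closed span), so $V_\sigma\ne\hs$ and $\Psi_\sigma$ fails to be a g-frame for $\hs$. Thus $\{\Lambda_i\}_{i\in I}$ and $\{\Lambda_{\pi(i)}\}_{i\in I}$ are not woven.

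I expect the main obstacle to be not any single inequality but the bookkeeping in the first paragraph: correctly identifying which operators, and with which multiplicities, make up $\Psi_\sigma$, and in particular pinning down that the omitted set is exactly $N_\sigma=\{k\in\sigma^c:\pi^{-1}(k)\in\sigma\}$. Once this is settled, the uniform upper bound $2B$, the preservation of the lower bound $A$ on $V_\sigma$, and the decisive choice $\sigma=\{i_0\}$ in the converse all follow from standard facts about sub-collections of (g-)Riesz bases.
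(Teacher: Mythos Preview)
Your proof is correct and follows essentially the same approach as the paper's: both arguments obtain the lower bound $A$ by observing that $\Psi_\sigma$ contains (with possible repetitions) the sub-collection $\{\Lambda_k\}_{k\in\sigma\cup\pi(\sigma^c)}$ of the g-Riesz basis, which is a g-Riesz sequence with the same lower bound, and both obtain $2B$ as the upper bound by summing over the two full families. For the converse in the ``moreover'' part, you choose $\sigma=\{i_0\}$ so that $\Lambda_{\pi(i_0)}$ is the missing operator, whereas the paper takes the complementary choice $\sigma=I\setminus\{i_0\}$ so that $\Lambda_{i_0}$ is missing; these are interchangeable and lead to the same conclusion. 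Your explicit bookkeeping of the omitted and doubled indices via $N_\sigma$ is more detailed than the paper's, but the underlying ideas coincide.
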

\begin{proof}
For any $f\in{\rm \overline{span}}(\{\Lambda_i^*(\hs_i)\}_{i\in \sigma}\cup\{\Lambda_{\pi{(i)}}^*(\hs_{\pi{(i)}})\}_{i\in \sigma^c})$ and for any $\sigma\subset I$, we have
$$\sum_{i \in  \sigma}\|\Lambda_if\|^2+\sum_{i \in  \sigma^c}\|\Lambda_{\pi{(i)}}f\|^2\ge \sum_{i \in  I,\pi{(i)}\notin\sigma^c}\|\Lambda_if\|^2\ge A\|f\|^2, $$
since any g-subsequence of a g-Riesz basis is a g-Riesz sequence with the same bounds. The upper frame bound is the sum of the upper frames bounds, which is $2B$. Note that it is not $B$ due to redundancy.

The moreover part is now proven by contradiction. Assume $\pi(i)\ne i$ so that $\pi(i_0)=j_0\ne i_0$ for some $i_0,j_0\in I$. Let $\sigma=I\setminus\{i_0\}$. Then 
$$\{\Lambda_{i}\}_{i\in\sigma}\cup\{\Lambda_{\pi{(i)}}\}_{i\in\sigma^c}=\{\Lambda_{i}\}_{i\in I\setminus\{i_0\}}\cup\{\Lambda_{j_0}\}$$
which is the set in which $\Lambda_{j_0}$ appears twice, but $\Lambda_{i_0}$ does not appear at all and therefore the closure of the span is not the whole space.
\end{proof}
We now give the weaving equivalent of an unconditional g-basis for $\hs$.
\begin{theorem}
Let $\{\Lambda_i\}_{i\in\ns}$ and $\{\Gamma_i\}_{i\in\ns}$ be g-Riesz basis sequences for $\hs$ with bounds $A_1, B_1$ and $A_2, B_2$ respectively. Then the following are equivalent:
\begin{enumerate}
	\item There exist constants $0<B\le C<\infty$ so that for every $\sigma\in\ns$the family $\{\Lambda_{i}\}_{i\in \sigma}\cup\{\Gamma_{i}\}_{i\in \sigma^c}$ is a g-Riesz basis sequence with bounds $B, C$.
	\item There is a constant $A>0$ satisfying for all $\{g_i\}_{i\in\ns}\in\oplus_{i\in\ns}\hs_i$ and all $\sigma\in\ns$
	$$A\|\sum_{i \in  \sigma}\Lambda_i^*g_i\|^2\le \|\sum_{i \in  \sigma}\Lambda_i^*g_i+\sum_{i \in  \sigma^c}\Gamma_i^*g_i\|^2.$$
	\item There is a constant $D>0$ satisfying for all $\{g_i\}_{i\in\ns}\in\oplus_{i\in\ns}\hs_i$ and all $\sigma\in\ns$
	$$D\big(\|\sum_{i \in  \sigma}\Lambda_i^*g_i\|^2+\|\sum_{i \in  \sigma^c}\Gamma_i^*g_i\|^2\big)\le \|\sum_{i \in  \sigma}\Lambda_i^*g_i+\sum_{i \in  \sigma^c}\Gamma_i^*g_i\|^2.$$
	\item There is a constant $E>0$ satisfying for all $\{g_i\}_{i\in\ns}\in\oplus_{i\in\ns}\hs_i$ and all $\sigma\in\ns$ so that if $\|\sum_{i \in  \sigma}\Lambda_i^*g_i\|=1$, then
	$$E\le \|\sum_{i \in  \sigma}\Lambda_i^*g_i+\sum_{i \in  \sigma^c}\Gamma_i^*g_i\|^2.$$
\end{enumerate}
\end{theorem}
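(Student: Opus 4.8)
The plan is to prove the chain of equivalences $(1)\Rightarrow(2)\Rightarrow(3)\Rightarrow(4)\Rightarrow(1)$, exploiting that for a g-Riesz basis sequence the norm $\|\sum_{i\in\sigma}\Lambda_i^*g_i\|$ is comparable to $\big(\sum_{i\in\sigma}\|g_i\|^2\big)^{1/2}$, so all four conditions are really statements about the synthesis operator of the weaving. Throughout I write $x=\sum_{i\in\sigma}\Lambda_i^*g_i$, $y=\sum_{i\in\sigma^c}\Gamma_i^*g_i$, and $s=\sum_{i\in\sigma}\Lambda_i^*g_i+\sum_{i\in\sigma^c}\Gamma_i^*g_i=x+y$; note $\|x\|^2\ge A_1\sum_{i\in\sigma}\|g_i\|^2$ and $\|y\|^2\ge A_2\sum_{i\in\sigma^c}\|g_i\|^2$, with matching upper bounds $B_1,B_2$, and that $\|x+y\|^2\le 2\|x\|^2+2\|y\|^2\le 2\max\{B_1,B_2\}\sum_{i\in\ns}\|g_i\|^2$, so an upper bound $C$ of the weaving-type $\|s\|^2\le C\sum_{i\in\ns}\|g_i\|^2$ is automatic and only lower estimates are at issue.

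First, $(1)\Rightarrow(2)$: if $\{\Lambda_i\}_{i\in\sigma}\cup\{\Gamma_i\}_{i\in\sigma^c}$ is a g-Riesz basis sequence with lower bound $B$, then $\|s\|^2\ge B\big(\sum_{i\in\sigma}\|g_i\|^2+\sum_{i\in\sigma^c}\|g_i\|^2\big)\ge B\sum_{i\in\sigma}\|g_i\|^2\ge \tfrac{B}{B_1}\|x\|^2$, so $A=B/B_1$ works. For $(2)\Rightarrow(3)$ I would use the triangle inequality in the form $\|y\|=\|s-x\|\le\|s\|+\|x\|$ together with $(2)$, which gives $\|x\|\le A^{-1/2}\|s\|$, hence $\|y\|\le(1+A^{-1/2})\|s\|$; combining, $\|x\|^2+\|y\|^2\le\big(A^{-1}+(1+A^{-1/2})^2\big)\|s\|^2$, so $D$ may be taken as the reciprocal of that constant. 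The implication $(3)\Rightarrow(4)$ is immediate: under $\|x\|=1$, condition $(3)$ gives $\|s\|^2\ge D(1+\|y\|^2)\ge D$, so $E=D$.

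The main work is $(4)\Rightarrow(1)$, and it splits into two pieces. The easy piece is that condition $(4)$ forces a lower g-Riesz bound: given arbitrary $\{g_i\}$ and $\sigma$, if $x\ne 0$ normalize by scaling all $g_i$, $i\in\sigma$, so that $\|x\|=1$; but the $g_i$ with $i\in\sigma^c$ must be scaled by the same factor to keep $s$ homogeneous, so one applies $(4)$ to the rescaled family and unwinds the scaling to get $\|s\|^2\ge E\|x\|^2\ge EA_1\sum_{i\in\sigma}\|g_i\|^2$. By the symmetric argument — run $(4)$ with the roles of $\Lambda$ and $\Gamma$ swapped, which is legitimate because conditions $(1)$–$(4)$ are symmetric under $\sigma\leftrightarrow\sigma^c$, $\Lambda\leftrightarrow\Gamma$ (replacing $\sigma$ by $\sigma^c$) — one also gets $\|s\|^2\ge E'\|y\|^2$ for some $E'>0$; adding, $\|s\|^2\ge\tfrac{1}{2}\min\{EA_1,E'A_2\}\big(\sum_{i\in\sigma}\|g_i\|^2+\sum_{i\in\sigma^c}\|g_i\|^2\big)$, which is the desired lower g-Riesz bound $B$. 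The harder piece is g-completeness of $\{\Lambda_i^*(\hs_i)\}_{i\in\sigma}\cup\{\Gamma_i^*(\hs_i)\}_{i\in\sigma^c}$ inside $\hs$: here I expect the obstacle, and the natural route is to observe that $(2)$ (equivalently the just-proved lower bound) makes the synthesis operator of the weaving bounded below, hence with closed range, so it suffices to rule out a nonzero $f\perp\big(\{\Lambda_i^*(\hs_i)\}_{i\in\sigma}\cup\{\Gamma_i^*(\hs_i)\}_{i\in\sigma^c}\big)$; then $f\perp\Lambda_i^*(\hs_i)$ for $i\in\sigma$ and $f\perp\Gamma_i^*(\hs_i)$ for $i\in\sigma^c$ means $\Lambda_if=0$ on $\sigma$ and $\Gamma_if=0$ on $\sigma^c$, and one would like to deduce $f=0$ — but g-Riesz basis sequences need not be g-complete in $\hs$, so completeness of the weaving must instead be read off from hypothesis $(1)$'s assertion that the weaving is a g-Riesz basis \emph{sequence}, i.e. g-complete in its own closed span, which is exactly what the induction-on-$|\sigma|$ argument of Theorem 4.1 establishes; so for $(4)\Rightarrow(1)$ I would mimic that finite-then-infinite induction, using $(4)$ at the inductive step exactly as the lower-bound hypothesis was used there, and take the span of the weaving as the ambient space, making "g-Riesz basis sequence'' self-referential and hence the completeness requirement vacuous beyond what the bounds already give.
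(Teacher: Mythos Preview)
Your chain $(1)\Rightarrow(2)\Rightarrow(3)\Rightarrow(4)$ is fine and essentially matches the paper, with only cosmetic differences in the constants (the paper also shows the trivial implications $(3)\Rightarrow(2)$ and $(2)\Rightarrow(4)$ first, then proves $(2)\Rightarrow(3)$ and $(4)\Rightarrow(2)$ by exactly your homogeneity and triangle-inequality ideas, and closes the loop with $(1)\Rightarrow(2)$ and $(3)\Rightarrow(1)$).

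There are, however, two genuine problems in your $(4)\Rightarrow(1)$ step.

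\textbf{The symmetry claim is false.} Conditions (2) and (4) single out $\Lambda$; they say nothing when $\|y\|=1$, so you cannot ``run (4) with the roles of $\Lambda$ and $\Gamma$ swapped'' to obtain $\|s\|^2\ge E'\|y\|^2$. Only (1) and (3) are symmetric under $\sigma\leftrightarrow\sigma^c$, $\Lambda\leftrightarrow\Gamma$. The repair is painless: having established $(4)\Rightarrow(2)$ by homogeneity, simply invoke your already-proved $(2)\Rightarrow(3)$ to get $\|s\|^2\ge D(\|x\|^2+\|y\|^2)$, and then conclude $(1)$ from $(3)$ via
\[
\sum_{i\in\ns}\|g_i\|^2\le \tfrac{1}{A_1}\|x\|^2+\tfrac{1}{A_2}\|y\|^2\le \tfrac{1}{D}\max\{\tfrac{1}{A_1},\tfrac{1}{A_2}\}\,\|s\|^2,
\]
which is precisely the paper's $(3)\Rightarrow(1)$ argument.

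\textbf{The completeness discussion is unnecessary.} A g-Riesz basis \emph{sequence} is, by definition, a g-Riesz basis for its own closed span; once the two-sided Riesz inequality $B\sum\|g_i\|^2\le\|s\|^2\le C\sum\|g_i\|^2$ is verified, completeness in that span is automatic. There is nothing more to prove, and in particular no need to import the finite-then-infinite induction from the g-Riesz \emph{basis} theorems (those concern completeness in all of $\hs$, which is not asserted here). The paper's proof of $(3)\Rightarrow(1)$ accordingly checks only the lower bound and remarks that the upper bound is obvious; your lengthy attempt to rule out a nonzero orthogonal $f$, and the reference to Theorem~4.1, can simply be deleted.
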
 
\begin{proof}
The implications (3)$\Rightarrow$(2), and (2)$\Rightarrow$(4) are clear. \\
We now prove (2)$\Rightarrow$(3). Given the assumptions in (2) we compute
\begin{align*}
	\|\sum_{i \in  \sigma^c}\Gamma_i^*g_i\|^2&=\|\sum_{i\in\sigma^c}\Gamma_i^*g_i+\sum_{i\in\sigma}\Lambda_i^*g_i-\sum_{i\in\sigma}\Lambda_i^*g_i\|^2\\
	&\le\|\sum_{i\in\sigma^c}\Gamma_i^*g_i+\sum_{i\in\sigma}\Lambda_i^*g_i\|^2+\|\sum_{i\in\sigma}\Lambda_i^*g_i\|^2 \\
	&\le \|\sum_{i\in\sigma^c}\Gamma_i^*g_i+\sum_{i\in\sigma}\Lambda_i^*g_i\|^2+\frac{1}{A}\|\sum_{i\in\sigma^c}\Gamma_i^*g_i+\sum_{i\in\sigma}\Lambda_i^*g_i\|^2.
\end{align*}

Hence, 
$$\frac{A}{A+1}\|\sum_{i \in  \sigma^c}\Gamma_i^*g_i\|^2\le \|\sum_{i\in\sigma^c}\Gamma_i^*g_i+\sum_{i\in\sigma}\Lambda_i^*g_i\|^2.$$
Similarly,
$$\frac{A}{A+1}\|\sum_{i \in  \sigma}\Lambda_i^*g_i\|^2\le \|\sum_{i\in\sigma^c}\Gamma_i^*g_i+\sum_{i\in\sigma}\Lambda_i^*g_i\|^2.$$
Therefore we have
\begin{align*}
	\frac{1}{2}\frac{A}{A+1}\big(\|\sum_{i \in  \sigma^c}\Gamma_i^*g_i\|^2+\|\sum_{i\in\sigma}\Lambda^*g_i\|^2\big)&\le\frac{A}{A+1}\max\big(\|\sum_{i \in  \sigma^c}\Gamma_i^*g_i\|^2,\|\sum_{i\in\sigma}\Lambda_i^*g_i\|^2\big)\\
	&\le\|\sum_{i\in\sigma^c}\Gamma_i^*g_i+\sum_{i\in\sigma}\Lambda_i^*g_i\|^2.
\end{align*}
(4)$\Rightarrow$(2): If $\sum_{i \in  \sigma}\Lambda_i^*g_i=0$ we are done. So assume not and by (4) we have
$$E\le \frac{1}{\|\sum_{i \in  \sigma}\Lambda_i^*g_i\|^2}\|\sum_{i\in\sigma^c}\Gamma_i^*g_i+\sum_{i\in\sigma}\Lambda_i^*g_i\|^2.$$
So (2) holds.

At this point we know that (2)$\Leftrightarrow$(3)$\Leftrightarrow$(4). Now we prove (1)$\Rightarrow$(2). Given $\sigma$ and $\{g_i\}_{i\in\ns}\in\oplus_{i\in\ns}\hs_i$, by (1) we have
\begin{align*}
	\|\sum_{i \in  \sigma}\Lambda_i^*g_i\|^2&\le C\sum_{i \in \sigma}\|g_i\|^2\le C\big(\sum_{i \in \sigma}\|g_i\|^2+\sum_{i \in \sigma^c}\|g_i\|^2\big)\\
	&\le\frac{C}{B}\|\sum_{i\in\sigma^c}\Gamma_i^*g_i+\sum_{i\in\sigma}\Lambda_i^*g_i\|^2.
\end{align*}
Finally we prove (3)$\Rightarrow$(1). For all $\{g_i\}_{i\in\ns}\in\oplus_{i\in\ns}\hs_i$ and $\sigma\in\ns$ we have
\begin{align*}
    \sum_{i \in\ns}\|g_i\|^2&=\sum_{i \in\sigma}\|g_i\|^2+\sum_{i \in\sigma^c}\|g_i\|^2\le \frac{1}{A_1}\|\sum_{i \in  \sigma}\Lambda_i^*g_i\|^2+\frac{1}{A_2}\|\sum_{i \in  \sigma^c}\Gamma_i^*g_i\|^2\\
	&\le\max\{\frac{1}{A_1},\frac{1}{A_2}\}\big( \|\sum_{i \in  \sigma}\Lambda_i^*g_i\|^2+\|\sum_{i \in  \sigma^c}\Gamma_i^*g_i\|^2\big)\\
	&\le \frac{1}{D}\max\{\frac{1}{A_1},\frac{1}{A_2}\}\|\sum_{i\in\sigma^c}\Gamma_i^*g_i+\sum_{i\in\sigma}\Lambda_i^*g_i\|^2
\end{align*}
Hence the lower bound is obtained. The upper of $\{\Lambda_{i}\}_{i\in \sigma}\cup\{\Gamma_{i}\}_{i\in \sigma^c}$ is obvious. The proof of the theorem is completed.
\end{proof}
\section{Perturbation theorem for weaving g-frames}
In this section, We present Paley-Wiener-type perturbation results \cite{christensen1995paley} for weaving g-frames. It is shown that the family of g-frames is woven under small perturbation. Specifically, we have the following. 

\begin{theorem}
For each $j\in[m]$, let $\Lambda_j=\{\Lambda_{ij}\}_{i\in I}$ be a g-frame for $\hs$ with frame bounds $A_j$ and $B_j$. Assume that there exist non-negative scalars $\lambda_j,~\eta_j,~\mu_j,~(j\in[m])$ such that for some fixed $n\in[m]$,
$$A=A_n-\sum_{j\in[m]\setminus\{n\}}(\lambda_j+\eta_j\sqrt{B_n}+\mu_j\sqrt{B_j})(\sqrt{B_n}+\sqrt{B_j})>0$$
and 
$$\|\sum_{i\in J}(\Lambda_{in}^*-\Lambda_{ij}^*)g_i\|\le \eta_j\|\sum_{i\in J}\Lambda_{in}^*g_i\|+\mu_j\|\sum_{i\in J}\Lambda_{ij}^*g_i\|+\lambda_j(\sum_{i\in J}\|g_i\|^2)^{1/2}$$
for any finite subset $J\subset I$, $g_i\in\hs_i$ and $j\in[m]\setminus\{n\}$. Then for any partition $\{\sigma_j\}_{j\in[m]}$ of $I$, the family $\{\Lambda_{ij}\}_{i\in \sigma_j,j\in[m]}$ is a g-frame for $\hs$ with universal frame bounds $A$ and $\sum_{j\in[m]}B_j$. Hence the family of g-frames $\{\Lambda_j\}_{j\in[m]}$ for $\hs$ is woven.
\end{theorem}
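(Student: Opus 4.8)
The plan is to show directly that each weaving satisfies the lower frame inequality with bound $A$, since the upper bound $\sum_{j\in[m]}B_j$ is automatic from Proposition 2.2. Fix a partition $\{\sigma_j\}_{j\in[m]}$ of $I$ and fix $f\in\hs$. The natural strategy is to compare the weaving $\sum_{j\in[m]}\sum_{i\in\sigma_j}\|\Lambda_{ij}f\|^2$ against the single g-frame $\{\Lambda_{in}\}_{i\in I}$, which already gives $A_n\|f\|^2\le\sum_{i\in I}\|\Lambda_{in}f\|^2$. Since $\{\sigma_j\}_{j\in[m]}$ partitions $I$, we can write $\sum_{i\in I}\|\Lambda_{in}f\|^2=\sum_{j\in[m]}\sum_{i\in\sigma_j}\|\Lambda_{in}f\|^2$, so it suffices to bound, for each $j\neq n$, the discrepancy $\big|\sum_{i\in\sigma_j}\|\Lambda_{in}f\|^2-\sum_{i\in\sigma_j}\|\Lambda_{ij}f\|^2\big|$ in terms of the weaving sum plus a small multiple of $\|f\|^2$.

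First I would pass from the operator-norm statements to quadratic-form statements. The key observation is that $\sum_{i\in\sigma_j}\|\Lambda_{in}f\|^2 = \sup\big\{\big|\sum_{i\in\sigma_j}\langle \Lambda_{in}f, g_i\rangle\big|^2 : \sum_{i\in\sigma_j}\|g_i\|^2\le 1\big\}$, and $\sum_{i\in\sigma_j}\langle\Lambda_{in}f,g_i\rangle = \langle f, \sum_{i\in\sigma_j}\Lambda_{in}^*g_i\rangle$ (using a finite subset $J\subset\sigma_j$ and a limiting argument to handle infinite $\sigma_j$, justified by g-Bessel convergence). Then, for $\{g_i\}$ with $\sum_{i\in\sigma_j}\|g_i\|^2\le 1$, the hypothesis inequality with this finite $J$ gives
$$\Big|\langle f,\textstyle\sum_{i\in J}(\Lambda_{in}^*-\Lambda_{ij}^*)g_i\rangle\Big|\le\|f\|\Big(\eta_j\|\textstyle\sum_{i\in J}\Lambda_{in}^*g_i\|+\mu_j\|\textstyle\sum_{i\in J}\Lambda_{ij}^*g_i\|+\lambda_j\Big),$$
and bounding $\|\sum_{i\in J}\Lambda_{in}^*g_i\|\le\sqrt{B_n}$, $\|\sum_{i\in J}\Lambda_{ij}^*g_i\|\le\sqrt{B_j}$ (these are the synthesis-operator norms of the g-frames, restricted and applied to a unit vector) yields that $\big(\sum_{i\in\sigma_j}\|\Lambda_{in}f\|^2\big)^{1/2}$ and $\big(\sum_{i\in\sigma_j}\|\Lambda_{ij}f\|^2\big)^{1/2}$ differ by at most $(\lambda_j+\eta_j\sqrt{B_n}+\mu_j\sqrt{B_j})\|f\|$.

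Next I would convert this bound on the difference of square roots into a bound on the difference of the squares, using $|a^2-b^2|=|a-b|(a+b)$ with $a=\big(\sum_{i\in\sigma_j}\|\Lambda_{in}f\|^2\big)^{1/2}\le\sqrt{B_n}\|f\|$ and $b=\big(\sum_{i\in\sigma_j}\|\Lambda_{ij}f\|^2\big)^{1/2}\le\sqrt{B_j}\|f\|$, so that
$$\Big|\sum_{i\in\sigma_j}\|\Lambda_{in}f\|^2-\sum_{i\in\sigma_j}\|\Lambda_{ij}f\|^2\Big|\le(\lambda_j+\eta_j\sqrt{B_n}+\mu_j\sqrt{B_j})(\sqrt{B_n}+\sqrt{B_j})\|f\|^2.$$
Summing over $j\in[m]\setminus\{n\}$ and rearranging gives
$$\sum_{j\in[m]}\sum_{i\in\sigma_j}\|\Lambda_{ij}f\|^2\ge\sum_{i\in I}\|\Lambda_{in}f\|^2-\sum_{j\in[m]\setminus\{n\}}(\lambda_j+\eta_j\sqrt{B_n}+\mu_j\sqrt{B_j})(\sqrt{B_n}+\sqrt{B_j})\|f\|^2\ge A\|f\|^2,$$
which is the desired lower bound; the upper bound $\sum_{j\in[m]}B_j$ follows from Proposition 2.2, and since the bounds are independent of the partition, the family is woven. \textbf{The main obstacle} I anticipate is the bookkeeping for infinite $\sigma_j$: the hypothesis is stated only for finite $J$, so I must argue that the relevant series converge (guaranteed by the g-Bessel property of each $\{\Lambda_{ij}\}_{i\in I}$) and that the difference estimate, valid on every finite truncation with a partition-independent constant, passes to the limit — a routine but necessary $\varepsilon$-argument. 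A secondary point of care is justifying the duality identity $\sum_{i\in J}\|\Lambda_{in}f\|^2 = \sup\{|\langle f,\sum_{i\in J}\Lambda_{in}^*g_i\rangle|^2 : \sum\|g_i\|^2\le1\}$, which is just the fact that the $\ell^2$-norm of $\{\Lambda_{in}f\}$ is the norm of the corresponding functional on $\oplus\hs_i$.
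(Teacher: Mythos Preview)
Your proposal is correct and follows essentially the same route as the paper's proof: both use Proposition~2.2 for the upper bound, derive from the hypothesis the estimate $\|T_\Lambda^{(n,\sigma_j)}-T_\Lambda^{(j,\sigma_j)}\|\le \lambda_j+\eta_j\sqrt{B_n}+\mu_j\sqrt{B_j}$ on the restricted synthesis operators, and then compare the weaving against $\sum_{i\in I}\|\Lambda_{in}f\|^2\ge A_n\|f\|^2$. The only cosmetic difference is that the paper bounds the frame-operator difference $\|T_\Lambda^{(n\sigma_j)}(T_\Lambda^{(n\sigma_j)})^*-T_\Lambda^{(j\sigma_j)}(T_\Lambda^{(j\sigma_j)})^*\|$ via the telescope $AB-CD=A(B-D)+(A-C)D$, whereas you obtain the equivalent pointwise estimate through the reverse triangle inequality on analysis-operator norms together with $|a^2-b^2|=|a-b|(a+b)$; the resulting constants are identical.
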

\begin{proof}
By Proposition 2.2, for any partition $\{\sigma_j\}_{j\in[m]}$ of $I$, the family $\{\Lambda_{ij}\}_{i\in \sigma_j,j\in[m]}$ is a g-Bessel sequence with Bessel bound $\sum_{j\in[m]}B_j$.

For the lower frame inequality, let $T_{\Lambda}^{(i)}$ be a synthesis operator associated with the g-frame $\{\Lambda_{ij}\}_{i\in I}$ for $j\in[m]$. Since 
\begin{align*}
	\|T_{\Lambda}^{(j)}g_i\|&=\|\sum_{i\in J}\Lambda_{ij}^*g_i\|=\sup_{\|g\|=1}\left| \left\langle g,\sum_{i\in J}\Lambda_{ij}^*g_i\right\rangle \right|\\
	&\le\sup_{\|g\|=1}(\sum_{i\in J}\|\Lambda_{ij}g\|^2)^{1/2}(\sum_{i\in J}\|g_i\|^2)^{1/2}\\
	&=\|T_{\Lambda}^{(j)}\|(\sum_{i\in J}\|g_i\|^2)^{1/2}\\
	&\le \sqrt{B_j}(\sum_{i\in J}\|g_i\|^2)^{1/2}
\end{align*}
for any finite subset $J\subset I$, $g_i\in\hs_i$,  then for $j\in[m]\setminus\{n\}$, 
we have
\begin{align*}
	&\|(T_{\Lambda}^{(n)}-T_{\Lambda}^{(j)})g_i\|\\
	&=\sup_{\|g\|=1}\left| \left\langle g,(T_{\Lambda}^{(n)}-T_{\Lambda}^{(j)})g_i\right\rangle\right|\\
	&=\sup_{\|g\|=1}\left| \left\langle g,\sum_{i\in J}(\Lambda_{in}^*-\Lambda_{ij}^*)g_i\right\rangle\right|\\
	&=\|\sum_{i\in J}(\Lambda_{in}^*-\Lambda_{ij}^*)g_i\|\\
	&\le \eta_j\|\sum_{i\in J}\Lambda_{in}^*g_i\|+\mu_j\|\sum_{i\in J}\Lambda_{ij}^*g_i\|+\lambda_j(\sum_{i\in J}\|g_i\|^2)^{1/2}\\
	&=\eta_j\sup_{\|g\|=1}|\left\langle g,T_{\Lambda}^{(n)}g_i\right\rangle |+\mu_j\sup_{\|g\|=1}|\left\langle g,T_{\Lambda}^{(j)}g_i\right\rangle |+\lambda_j(\sum_{i\in J}\|g_i\|^2)^{1/2}\\
	&\le \eta_j\|T_{\Lambda}^{(n)}\|(\sum_{i\in J}\|g_i\|^2)^{1/2}+\mu_j\|T_{\Lambda}^{(j)}\|(\sum_{i\in J}\|g_i\|^2)^{1/2}+\lambda_j(\sum_{i\in J}\|g_i\|^2)^{1/2}\\
	&\le (\lambda_j+\eta_j\sqrt{B_n}+\mu_j\sqrt{B_j})(\sum_{i\in J}\|g_i\|^2)^{1/2}.	
\end{align*}
This gives 
\begin{equation}
\|T_{\Lambda}^{(n)}-T_{\Lambda}^{(j)}\|\le \lambda_j+\eta_j\sqrt{B_n}+\mu_j\sqrt{B_j}.
\end{equation}
For $j\in[m]$ and $\sigma\subset I$, we define
$$T_{\Lambda}^{(j\sigma)}:\oplus_{i\in \sigma}\hs_i\longrightarrow\hs,~~T_{\Lambda}^{(j\sigma)}\{g_i\}=\sum_{i\in \sigma}\Lambda_{ij}^*g_i,~~~g_i\in\hs_i.$$
It is easy to see 
$$\|T_{\Lambda}^{(j\sigma)}g_i\|\le \|T_{\Lambda}^{(j)}g_i\|\le \sqrt{B_j}(\sum_{i\in J}\|g_i\|^2)^{1/2}.$$
Thus, $\|T_{\Lambda}^{(j\sigma)}\|\le \sqrt{B_j}$ for all $j\in[m]$. Similarly, by using (5.1) one can show that for any $j\in[m]\setminus\{n\}$,
$$\|T_{\Lambda}^{(n\sigma)}-T_{\Lambda}^{(j\sigma)}\|\le \lambda_j+\eta_j\sqrt{B_n}+\mu_j\sqrt{B_j}.$$
For any $f\in\hs$ and $j\in[m]\setminus\{n\}$, we have
\begin{align*}
	&\|(T_{\Lambda}^{(n\sigma)}(T_{\Lambda}^{(n\sigma)})^*-T_{\Lambda}^{(j\sigma)}(T_{\Lambda}^{(j\sigma)})^*)f\|\\
	&=\|(T_{\Lambda}^{(n\sigma)}(T_{\Lambda}^{(n\sigma)})^*-T_{\Lambda}^{(n\sigma)}(T_{\Lambda}^{(j\sigma)})^*+T_{\Lambda}^{(n\sigma)}(T_{\Lambda}^{(j\sigma)})^*-T_{\Lambda}^{(j\sigma)}(T_{\Lambda}^{(j\sigma)})^*)f\|\\
	&\le\|(T_{\Lambda}^{(n\sigma)}(T_{\Lambda}^{(n\sigma)})^*-T_{\Lambda}^{(n\sigma)}(T_{\Lambda}^{(j\sigma)})^*)f\|+\|(T_{\Lambda}^{(n\sigma)}(T_{\Lambda}^{(j\sigma)})^*-T_{\Lambda}^{(j\sigma)}(T_{\Lambda}^{(j\sigma)})^*)f\|\\
	&\le\|T_{\Lambda}^{(n\sigma)}\|\|((T_{\Lambda}^{(n\sigma)})^*-(T_{\Lambda}^{(j\sigma)})^*)f\|+\|(T_{\Lambda}^{(j\sigma)})^*\|\|(T_{\Lambda}^{(n\sigma)}-T_{\Lambda}^{(j\sigma)})f\|\\
	&\le (\lambda_j+\eta_j\sqrt{B_n}+\mu_j\sqrt{B_j})(\sqrt{B_n}+\sqrt{B_j})\|f\|.\tag{5.2}
\end{align*}
Let $\{\sigma_j\}_{j\in[m]}$ be any partition of $I$ and $T_{\Lambda}$ be the synthesis operator associated with the Bessel g-sequence $\{\Lambda_{ij}\}_{i\in \sigma_j,j\in[m]}$. By using (5.2), we have
\begin{align*}
	\|T^*_{\Lambda}f\|^2
	&=|\left\langle f,T_{\Lambda}T^*_{\Lambda}f\right\rangle |\\
	&=\bigg|\bigg\langle f,\sum_{i \in  I}\Lambda^*_{ij}\Lambda_{ij}f\bigg\rangle  \bigg| \\
	&=\bigg|\bigg\langle f,\sum_{i \in  \sigma_1}\Lambda^*_{i1}\Lambda_{i1}f+\cdots+\sum_{i \in  \sigma_n}\Lambda^*_{in}\Lambda_{in}f+\cdots+\sum_{i \in  \sigma_m}\Lambda^*_{im}\Lambda_{im}f\bigg\rangle  \bigg|\\
	&=\bigg|\bigg\langle f,\sum_{i \in  \sigma_1}\Lambda^*_{i1}\Lambda_{i1}f+\cdots+\sum_{j\in[m]}\sum_{i \in  \sigma_j}\Lambda^*_{in}\Lambda_{in}f\\
	&\qquad-\sum_{j\in[m]\setminus\{n\}}\sum_{i \in  \sigma_j}\Lambda^*_{in}\Lambda_{in}f+\cdots+\sum_{i \in  \sigma_m}\Lambda^*_{im}\Lambda_{im}f\bigg\rangle  \bigg|\\
	&=\bigg|\bigg\langle f,\sum_{i \in  I}\Lambda_{in}^*\Lambda_{in}f-\sum_{j\in[m]\setminus\{n\}}\sum_{i\in\sigma_j}(\Lambda_{in}^*\Lambda_{in}-\Lambda_{ij}^*\Lambda_{ij})f\bigg\rangle  \bigg|\\
	&\ge \bigg|\bigg\langle f,\sum_{i \in  I}\Lambda^*_{in}\Lambda_{in}f\bigg\rangle  \bigg|-\sum_{j\in[m]\setminus\{n\}}\bigg|\bigg\langle f,\sum_{i\in\sigma_j}(\Lambda_{in}^*\Lambda_{in}-\Lambda_{ij}^*\Lambda_{ij})f\bigg\rangle\bigg|\\
	&\ge\big|\big\langle f,T_{\Lambda}^{(n)}(T^{(n)}_{\Lambda})^*f\big\rangle \big|- \sum_{j\in[m]\setminus\{n\}}\|f\|\sup_{\|f_0\|=1}\bigg|\bigg\langle f_0,\sum_{i\in\sigma_j}(\Lambda_{in}^*\Lambda_{in}-\Lambda_{ij}^*\Lambda_{ij})f\bigg\rangle\bigg|\\
	&=\|(T^{(n)}_{\Lambda})^*f\|^2- \sum_{j\in[m]\setminus\{n\}}\|f\|\sup_{\|f_0\|=1}|\langle f_0, (T_{\Lambda}^{(n\sigma_j)}(T^{(n\sigma_j)}_{\Lambda})^*-T_{\Lambda}^{(j\sigma_j)}(T^{(j\sigma_j)}_{\Lambda})^*)f \rangle|\\
	&\ge A_n\|f\|^2-\sum_{j\in[m]\setminus\{n\}}\|f\|\|(T_{\Lambda}^{(n\sigma_j)}(T^{(n\sigma_j)}_{\Lambda})^*-T_{\Lambda}^{(j\sigma_j)}(T^{(j\sigma_j)}_{\Lambda})^*)f\|\\
	&\ge A_n\|f\|^2-\sum_{j\in[m]\setminus\{n\}}\|f\|(\lambda_j+\eta_j\sqrt{B_n}+\mu_j\sqrt{B_j})(\sqrt{B_n}+\sqrt{B_j})\|f\|\\
	&=(A_n-\sum_{j\in[m]\setminus\{n\}}(\lambda_j+\eta_j\sqrt{B_n}+\mu_j\sqrt{B_j})(\sqrt{B_n}+\sqrt{B_j}))\|f\|^2>0
\end{align*}
Hence, the $\{\Lambda_{ij}\}_{i\in \sigma_j,j\in[m]}$ is a g-frame for $\hs$ with required universal frame bounds. We complete the proof of the theorem.
\end{proof}
When the index $n$ in Theorem 5.1 is not fixed, we have the following result.
\begin{theorem}
For each $j\in[m]$, let $\Lambda_j=\{\Lambda_{ij}\}_{i\in I}$ be a g-frame for $\hs$ with frame bounds $A_j$ and $B_j$. Let $\lambda_j,~\eta_j,~\mu_j\ge 0$, $j\in[m-1]$ be such that
$$A=A_1-\sum_{j\in[m-1]}(\lambda_j+\eta_j\sqrt{B_j}+\mu_j\sqrt{B_{j+1}})(\sqrt{B_j}+\sqrt{B_{j+1}})>0$$
and 
$$\|\sum_{i\in J}(\Lambda_{ij}^*-\Lambda_{i(j+1)}^*)g_i\|\le \eta_j\|\sum_{i\in J}\Lambda_{ij}^*g_i\|+\mu_j\|\sum_{i\in J}\Lambda_{i(j+1)}^*g_i\|+\lambda_j(\sum_{i\in J}\|g_i\|^2)^{1/2}$$
for any finite subset $J\subset I$, $g_i\in\hs_i$ and $j\in[m-1]$. Then for any partition $\{\sigma_j\}_{j\in[m]}$ of $I$, the family $\{\Lambda_{ij}\}_{i\in \sigma_j,j\in[m]}$ is a g-frame for $\hs$ with universal frame bounds $A$ and $\sum_{j\in[m]}B_j$.
\end{theorem}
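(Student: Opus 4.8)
The plan is to run the argument of Theorem 5.1 almost verbatim, but to replace the ``star'' splitting around the single fixed index $n$ by a telescoping splitting along the chain $1\to 2\to\cdots\to m$ built from nested tail sets. As before, the upper bound is free: by Proposition 2.2 every weaving $\{\Lambda_{ij}\}_{i\in\sigma_j,j\in[m]}$ is a g-Bessel sequence with bound $\sum_{j\in[m]}B_j$, so for an arbitrary partition $\{\sigma_j\}_{j\in[m]}$ of $I$ only the universal lower bound $A$ has to be produced.

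First I would extract from the perturbation hypothesis the same operator estimates used in Theorem 5.1. Writing $T_\Lambda^{(j\sigma)}\colon\oplus_{i\in\sigma}\hs_i\to\hs$, $T_\Lambda^{(j\sigma)}\{g_i\}=\sum_{i\in\sigma}\Lambda_{ij}^*g_i$, one gets $\|T_\Lambda^{(j\sigma)}\|\le\sqrt{B_j}$, and from the assumed inequality (first on finite subsets $J$, then extended to arbitrary $\sigma\subset I$ by the same limiting argument used for (5.1)) one obtains $\|T_\Lambda^{(j\sigma)}-T_\Lambda^{((j+1)\sigma)}\|\le\lambda_j+\eta_j\sqrt{B_j}+\mu_j\sqrt{B_{j+1}}$ for every $\sigma\subset I$ and $j\in[m-1]$. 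Combining the two exactly as in the derivation of (5.2) gives, for all $\sigma\subset I$, $f\in\hs$, $j\in[m-1]$,
$$\bigl\|\bigl(T_\Lambda^{(j\sigma)}(T_\Lambda^{(j\sigma)})^*-T_\Lambda^{((j+1)\sigma)}(T_\Lambda^{((j+1)\sigma)})^*\bigr)f\bigr\|\le c_j\|f\|,\qquad c_j:=(\lambda_j+\eta_j\sqrt{B_j}+\mu_j\sqrt{B_{j+1}})(\sqrt{B_j}+\sqrt{B_{j+1}}).$$
Abbreviate $S_j^{\sigma}:=\sum_{i\in\sigma}\Lambda_{ij}^*\Lambda_{ij}=T_\Lambda^{(j\sigma)}(T_\Lambda^{(j\sigma)})^*$, a bounded positive operator since each $\{\Lambda_{ij}\}_{i\in I}$ is g-Bessel.

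The heart of the argument is a telescoping identity. Set $\tau_j:=\sigma_j\cup\sigma_{j+1}\cup\cdots\cup\sigma_m$, so that $\tau_1=I$ and $\tau_j$ is the disjoint union $\sigma_j\cup\tau_{j+1}$; hence $S_j^{\sigma_j}=S_j^{\tau_j}-S_j^{\tau_{j+1}}$ for $j<m$ and $S_m^{\sigma_m}=S_m^{\tau_m}$. Summing over $j$ and reindexing gives
$$\sum_{j\in[m]}S_j^{\sigma_j}=S_1^{\tau_1}-\sum_{j\in[m-1]}\bigl(S_j^{\tau_{j+1}}-S_{j+1}^{\tau_{j+1}}\bigr).$$
Since $S_1^{\tau_1}=S_1^{I}$ is the g-frame operator of $\{\Lambda_{i1}\}_{i\in I}$, it satisfies $\langle S_1^{I}f,f\rangle\ge A_1\|f\|^2$, so for every $f\in\hs$,
$$\sum_{j\in[m]}\sum_{i\in\sigma_j}\|\Lambda_{ij}f\|^2=\Bigl\langle f,\sum_{j\in[m]}S_j^{\sigma_j}f\Bigr\rangle\ge A_1\|f\|^2-\sum_{j\in[m-1]}\bigl|\bigl\langle f,(S_j^{\tau_{j+1}}-S_{j+1}^{\tau_{j+1}})f\bigr\rangle\bigr|.$$
Applying the displayed operator estimate with $\sigma=\tau_{j+1}$ bounds each term of the last sum by $c_j\|f\|^2$, so the whole quantity is at least $\bigl(A_1-\sum_{j\in[m-1]}c_j\bigr)\|f\|^2=A\|f\|^2>0$. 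Together with the Bessel bound this shows that every weaving is a g-frame with bounds $A$ and $\sum_{j\in[m]}B_j$, uniformly in the partition, which is the assertion.

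The only genuinely new ingredient compared with Theorem 5.1 is the choice of the nested tail sets $\tau_j$ together with the verification of the telescoping identity: thanks to it the perturbation hypothesis is invoked only between consecutive indices $j,j+1$ and on a common index set $\tau_{j+1}$ — precisely the form in which it is assumed — so no slack is introduced and the accumulated error is exactly $\sum_{j\in[m-1]}c_j$. I expect the only point requiring care to be the passage from finite subsets $J$ to the possibly infinite tail sets $\tau_{j+1}$ in the norm estimates, which is handled by the same approximation already used for (5.1) in the proof of Theorem 5.1.
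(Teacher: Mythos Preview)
Your proof is correct and is essentially the same as the paper's: your tail sets $\tau_{j+1}$ are exactly the paper's $I\setminus\bigl(\bigcup_{l\in[j]}\sigma_l\bigr)$, and your telescoping identity $\sum_{j}S_j^{\sigma_j}=S_1^{I}-\sum_{j\in[m-1]}(S_j^{\tau_{j+1}}-S_{j+1}^{\tau_{j+1}})$ is precisely the decomposition the paper derives (in less compact notation). The operator estimates and the final lower bound are obtained identically.
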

\begin{proof}
Clearly, $\sum_{j\in[m]}B_j$ is an upper universal canstant for the family $\{\Lambda_{ij}\}_{i\in \sigma_j,j\in[m]}$ for any partition $\{\sigma_j\}_{j\in[m]}$ of $I$. From the proof of Theorem 5.1, for any $j\in[m-1]$, we have
$$\|T_{\Lambda}^{(j\sigma)}-T_{\Lambda}^{((j+1)\sigma)}\|\le \lambda_j+\eta_j\sqrt{B_j}+\mu_j\sqrt{B_{j+1}}.$$
Furthermore
$$\|(T_{\Lambda}^{(j\sigma)}(T_{\Lambda}^{(j\sigma)})^*-T_{\Lambda}^{((j+1)\sigma)}(T_{\Lambda}^{((j+1)\sigma)})^*)\|\le (\lambda_j+\eta_j\sqrt{B_j}+\mu_j\sqrt{B_{j+1}})(\sqrt{B_j}+\sqrt{B_{j+1}})$$
for all $j\in[m-1]$. For all $f\in\hs$, we have
\begin{align*}
	\|T^*_{\Lambda}f\|^2&=|\left\langle f,T_{\Lambda}T^*_{\Lambda}f\right\rangle |\\
	&=\bigg|\bigg\langle f,\sum_{i \in  I}\Lambda^*_{ij}\Lambda_{ij}f\bigg\rangle  \bigg|\\
	&=\bigg|\bigg\langle f,\sum_{j\in[m]}\sum_{i \in  \sigma_j}\Lambda^*_{ij}\Lambda_{ij}f\bigg\rangle  \bigg| \\
	&=\bigg|\bigg\langle f,\sum_{j\in[m]}\big(\sum_{i \in  \sigma_j}\Lambda^*_{ij}\Lambda_{ij}f+\sum_{i \in  {I\setminus(\bigcup_{l\in[j]}\sigma_l)}}\Lambda^*_{ij}\Lambda_{ij}f-\sum_{i \in  {I\setminus(\bigcup_{l\in[j]}\sigma_l)}}\Lambda^*_{ij}\Lambda_{ij}f\big)\bigg\rangle  \bigg| \\
	&=\bigg|\bigg\langle f,\sum_{j\in[m]}\big(\sum_{i \in  {I\setminus(\bigcup_{l\in[j-1]}\sigma_l)}}\Lambda^*_{ij}\Lambda_{ij}f-\sum_{i \in  {I\setminus(\bigcup_{l\in[j]}\sigma_l)}}\Lambda^*_{ij}\Lambda_{ij}f\big)\bigg\rangle  \bigg|\\
	&=\bigg|\bigg\langle f,\sum_{i\in I}\Lambda_{i1}^*\Lambda_{i1}f-\sum_{j\in[m-1]}\big(\sum_{i \in  {I\setminus(\bigcup_{l\in[j]}\sigma_l)}}\Lambda^*_{ij}\Lambda_{ij}f\\
	&\qquad\qquad\qquad\qquad-\sum_{i \in  {I\setminus(\bigcup_{l\in[j]}\sigma_l)}}\Lambda^*_{i(j+1)}\Lambda_{i(j+1)}f\big)\bigg\rangle  \bigg|\\
	&\ge \bigg|\bigg\langle f,\sum_{i\in I}\Lambda_{i1}^*\Lambda_{i1}f\bigg\rangle  \bigg|-\sum_{j\in[m-1]}\bigg|\bigg\langle f,\sum_{i \in  {I\setminus(\bigcup_{l\in[j]}\sigma_l)}}\Lambda^*_{ij}\Lambda_{ij}f\\
	&\qquad\qquad\qquad\qquad-\sum_{i \in  {I\setminus(\bigcup_{l\in[j]}\sigma_l)}}\Lambda^*_{i(j+1)}\Lambda_{i(j+1)}f\bigg\rangle  \bigg|\\
	&\ge\left| \left\langle f, T_{\Lambda}^{(1)}(T_{\Lambda}^{(1)})^*f\right\rangle \right| -\sum_{j\in[m-1]}\|f\|\sup_{\|f_0\|=1}\bigg|\bigg\langle f_0,\sum_{i \in  {I\setminus(\bigcup_{l\in[j]}\sigma_l)}}\Lambda^*_{ij}\Lambda_{ij}f\\
	&\qquad\qquad\qquad\qquad-\sum_{i \in  {I\setminus(\bigcup_{l\in[j]}\sigma_l)}}\Lambda^*_{i(j+1)}\Lambda_{i(j+1)}f\bigg\rangle  \bigg|\\
	&\ge A_1\|f\|^2-\sum_{j\in[m-1]}\|f\|\sup_{\|f_0\|=1}\big|\big\langle f_0,(T_{\Lambda}^{(j(I\setminus(\bigcup_{l\in[j]}\sigma_l))}(T_{\Lambda}^{(j(I\setminus(\bigcup_{l\in[j]}\sigma_l))})^*\\
	&\qquad\qquad\qquad\qquad-T_{\Lambda}^{((j+1)(I\setminus(\bigcup_{l\in[j]}\sigma_l))}(T_{\Lambda}^{((j+1)(I\setminus(\bigcup_{l\in[j]}\sigma_l))})^*)f \big\rangle  \big|\\
	&\ge A_1\|f\|^2-\sum_{j\in[m-1]}\|f\|\big\|(T_{\Lambda}^{(j(I\setminus(\bigcup_{l\in[j]}\sigma_l))}(T_{\Lambda}^{(j(I\setminus(\bigcup_{l\in[j]}\sigma_l))})^*\\
	&\qquad\qquad\qquad\qquad-T_{\Lambda}^{((j+1)(I\setminus(\bigcup_{l\in[j]}\sigma_l))}(T_{\Lambda}^{((j+1)(I\setminus(\bigcup_{l\in[j]}\sigma_l))})^*)f  \big\|\\
	&\ge A_1\|f\|^2-\sum_{j\in[m-1]}\|f\|^2\big\|T_{\Lambda}^{(j(I\setminus(\bigcup_{l\in[j]}\sigma_l))}(T_{\Lambda}^{(j(I\setminus(\bigcup_{l\in[j]}\sigma_l))})^*\\
	&\qquad\qquad\qquad\qquad-T_{\Lambda}^{((j+1)(I\setminus(\bigcup_{l\in[j]}\sigma_l))}(T_{\Lambda}^{((j+1)(I\setminus(\bigcup_{l\in[j]}\sigma_l))})^*  \big\|\\
	&\ge\big[A_1-\sum_{j\in[m-1]}(\lambda_j+\eta_j\sqrt{B_j}+\mu_j\sqrt{B_{j+1}})(\sqrt{B_j}+\sqrt{B_{j+1}})\big]\|f\|^2.
\end{align*}
This gives the lower universal frame bound. We complete the proof.
\end{proof}
Finally, we consider the stability of g-frame with a finite number of bounded, invertible operator. When $T_i=T_j$ for all $i,j\in I$, the Proposition 6.2 of \cite{bemrose2015weaving} can be obtained from the following proposition.
\begin{proposition}
Let $\{\Lambda_i\}_{i\in I}$ be a g-frame for $\hs$ with frame bounds $A$ and $B$ and let $T_i$ be a bounded, invertible operator for all $i\in I$. If
$$\|I_{\hs}-T_i\|^2<\frac{A}{B},$$
then $\{\Lambda_i\}_{i\in I}$ and $\{\Lambda_iT_i\}_{i\in I}$ are woven.
\end{proposition}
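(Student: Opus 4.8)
The plan is to verify the two defining conditions of a woven family (Definition 2.1) for the pair $\{\Lambda_i\}_{i\in I}$, $\{\Lambda_iT_i\}_{i\in I}$: a universal upper and a universal lower g-frame bound for every weaving $\{\Lambda_i\}_{i\in\sigma}\cup\{\Lambda_iT_i\}_{i\in\sigma^c}$, $\sigma\subset I$. The upper bound is routine: since each $T_i$ is bounded, $\sum_{i\in I}\|\Lambda_iT_if\|^2\le\big(\sup_{i\in I}\|T_i\|^2\big)B\|f\|^2$, so $\{\Lambda_iT_i\}_{i\in I}$ is a g-Bessel sequence and Proposition 2.2 hands us the universal upper bound $B\big(1+\sup_{i\in I}\|T_i\|^2\big)$. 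The whole difficulty is the universal lower bound, which I would obtain as a Paley--Wiener perturbation estimate at the level of analysis operators, in the spirit of Theorems 5.1 and 5.2. Write $\mu:=\sup_{i\in I}\|I_{\hs}-T_i\|$, so the hypothesis becomes $B\mu^2<A$.

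For the lower bound I would fix $\sigma\subset I$ and compare the analysis operator $V$ of the g-frame $\{\Lambda_i\}_{i\in I}$, $Vf=\{\Lambda_if\}_{i\in I}\in\oplus_{i\in I}\hs_i$, with the analysis operator $U_\sigma$ of the weaving, defined by $(U_\sigma f)_i=\Lambda_if$ for $i\in\sigma$ and $(U_\sigma f)_i=\Lambda_iT_if$ for $i\in\sigma^c$. The lower g-frame inequality of $\{\Lambda_i\}$ gives $\|Vf\|\ge\sqrt{A}\,\|f\|$, while the identity $\Lambda_iT_i=\Lambda_i-\Lambda_i(I_{\hs}-T_i)$ gives
$$\|(V-U_\sigma)f\|^2=\sum_{i\in\sigma^c}\|\Lambda_i(I_{\hs}-T_i)f\|^2 .$$
The step I would single out as the heart of the argument is the estimate
$$\sum_{i\in\sigma^c}\|\Lambda_i(I_{\hs}-T_i)f\|^2\le B\mu^2\|f\|^2\qquad(f\in\hs),$$
uniformly in $\sigma$. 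Granting it, one gets $\|U_\sigma f\|\ge\|Vf\|-\|(V-U_\sigma)f\|\ge(\sqrt{A}-\sqrt{B}\,\mu)\|f\|$, hence
$$\sum_{i\in\sigma}\|\Lambda_if\|^2+\sum_{i\in\sigma^c}\|\Lambda_iT_if\|^2=\|U_\sigma f\|^2\ge(\sqrt{A}-\sqrt{B}\,\mu)^2\|f\|^2 ,$$
and $(\sqrt{A}-\sqrt{B}\,\mu)^2>0$ precisely because $B\mu^2<A$. Since this constant is independent of $\sigma$, it is the required universal lower frame bound and the proof is complete.

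The main obstacle is thus the displayed $B$-Bessel-type estimate for $\{\Lambda_i(I_{\hs}-T_i)\}_{i\in I}$. When all the $T_i$ coincide, $T_i\equiv T$, it is immediate from the upper g-frame inequality of $\{\Lambda_i\}$ applied to the single vector $(I_{\hs}-T)f$,
$$\sum_{i\in\sigma^c}\|\Lambda_i(I_{\hs}-T)f\|^2\le\sum_{i\in I}\|\Lambda_i(I_{\hs}-T)f\|^2\le B\|(I_{\hs}-T)f\|^2\le B\|I_{\hs}-T\|^2\|f\|^2 ,$$
which, together with the remark preceding the statement, is exactly why the case $T_i=T_j$ recovers Proposition 6.2 of \cite{bemrose2015weaving} with universal lower bound $(\sqrt{A}-\sqrt{B}\,\|I_{\hs}-T\|)^2$. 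For genuinely varying $T_i$ the same scheme is what I would follow once the sum $\sum_{i\in\sigma^c}\|\Lambda_i(I_{\hs}-T_i)f\|^2$ is organized over the finitely many distinct operators occurring among the $T_i$; this is the delicate point where the hypothesis that only a finite number of bounded invertible operators $T_i$ appear is used, and where the tracking of the universal constant needs to be carried out with care.
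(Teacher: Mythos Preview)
Your argument is the paper's: write $\Lambda_iT_i=\Lambda_i-\Lambda_i(I_{\hs}-T_i)$, apply the reverse triangle inequality in $\oplus_i\hs_i$, and subtract a Bessel-type bound for $\{\Lambda_i(I_{\hs}-T_i)\}_{i\in\sigma^c}$ from the lower g-frame bound of $\{\Lambda_i\}$. The paper's chain ends at $(\sqrt{A}-\sqrt{B}\,\|I_{\hs}-T_i\|)\|f\|$, and your treatment of the constant case $T_i\equiv T$ reproduces exactly that computation.

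The point where you hesitate---the estimate $\sum_{i\in\sigma^c}\|\Lambda_i(I_{\hs}-T_i)f\|^2\le B\mu^2\|f\|^2$ for genuinely varying $T_i$---is a real gap, and it is the \emph{same} gap in the paper's proof: the passage $\big(\sum_{i\in\sigma^c}\|\Lambda_i(I_{\hs}-T_i)f\|^2\big)^{1/2}\le\sqrt{B}\,\|(I_{\hs}-T_i)f\|$ there carries a free index $i$ on the right and is only justified when all $T_i$ coincide, since the upper g-frame inequality needs a single argument vector. Your suggested fix of grouping by the finitely many distinct $T_i$ does not recover $B\mu^2$: splitting $\sigma^c$ into $k$ blocks and using the upper bound on each yields only $kB\mu^2\|f\|^2$, hence a lower weaving bound $(\sqrt{A}-\sqrt{kB}\,\mu)^2$, which requires $\|I_{\hs}-T_i\|^2<A/(kB)$ rather than $A/B$. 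Worse, for unrestricted varying $T_i$ the estimate can fail outright: with $\hs=\ell^2$, $\Lambda_if=\langle f,e_i\rangle$ (so $A=B=1$) and $T_i=I_{\hs}-\mu\langle\,\cdot\,,e_1\rangle e_i$ (each invertible, $\|I_{\hs}-T_i\|=\mu<1$), one gets $\Lambda_i(I_{\hs}-T_i)f=\mu\langle f,e_1\rangle$ for every $i$, so $\sum_i|\Lambda_i(I_{\hs}-T_i)f|^2=\infty$ whenever $\langle f,e_1\rangle\ne0$, and $\{\Lambda_iT_i\}$ is not even g-Bessel. So the ``delicate point'' you flag is not mere bookkeeping; under the stated hypothesis the scheme does not close for varying $T_i$, and neither does the paper's.
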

\begin{proof}
Note that $T_j$ is invertible and thus $\{\Lambda_iT_i\}_{i\in I}$ is automatically a g-frame. It is easy to compute that $(1+\|T_i\|^2)B$ is an upper frame bound of $\{\Lambda_i\}_{i\in\sigma}\cup\{\Lambda_iT_i\}_{i\in\sigma^c}$.
For every $\sigma\in I$ and for every $\in\hs$ we have by Minkowski's inequality and subadditivity of the square root function
\begin{align*}
	&\big(\sum_{i\in\sigma}\|\Lambda_if\|^2+\sum_{i\in\sigma^c}\|\Lambda_iT_if\|^2\big)^{1/2}\\
	&\qquad=\big(\sum_{i\in\sigma}\|\Lambda_if\|^2+\sum_{i\in\sigma^c}\|\Lambda_i(f-(f-T_if))\|^2\big)^{1/2}\\
	&\qquad=\big(\sum_{i\in\sigma}\|\Lambda_if\|^2+\sum_{i\in\sigma^c}\|\Lambda_if-\Lambda_i(I_{\hs}-T_i)f\|^2\big)^{1/2}\\
	&\qquad\ge \big(\sum_{i\in\sigma}\|\Lambda_if\|^2+\sum_{i\in\sigma^c}\|\Lambda_if\|^2-\sum_{i\in\sigma^c}\|\Lambda_i(I_{\hs}-T_i)f\|^2\big)^{1/2}\\
	&\qquad\ge \big(\sum_{i\in I}\|\Lambda_if\|^2\big)^{1/2}-\big(\sum_{i\in\sigma^c}\|\Lambda_i(I_{\hs}-T_i)f\|^2\big)^{1/2}\\
	&\qquad\ge \sqrt{A}\|f\|-\sqrt{B}\|(I_{\hs}-T_i)f\|\\
	&\qquad\ge (\sqrt{A}-\sqrt{B}\|I_{\hs}-T_i\|)\|f\|
\end{align*}
Thus, $\{\Lambda_i\}_{i\in\sigma}\cup\{\Lambda_iT_i\}_{i\in\sigma^c}$ forms a g-frame having 
$$A-B\|I_{\hs}-T_i\|^2>0$$
as its lower frame bound.
\end{proof}
\begin{corollary}
Let $\{\Lambda_i\}_{i\in I}$ be a g-frame for $\hs$ with frame bounds $A$ and $B$ and frame operator $S_{\Lambda}$. If $B/A<2$, then $\Lambda$ and the scaled canonical dual g-frame $\widetilde{\Lambda}=\{\frac{2AB}{A+B}\Lambda_iS_{\Lambda}^{-1}\}_{i\in I}$ are woven.
\end{corollary}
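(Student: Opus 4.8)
The plan is to reduce this to Proposition 5.3 by taking all the perturbing operators equal. Set $c=\frac{2AB}{A+B}$ and $T=cS_{\Lambda}^{-1}$, so that the $i$-th element of $\widetilde{\Lambda}$ is precisely $\Lambda_iT$, uniformly in $i\in I$. Since $S_{\Lambda}$ is bounded, positive and invertible, $T$ is a bounded invertible operator on $\hs$ (its inverse being $\frac{A+B}{2AB}S_{\Lambda}$), so Proposition 5.3 will give that $\{\Lambda_i\}_{i\in I}$ and $\{\Lambda_iT\}_{i\in I}=\widetilde{\Lambda}$ are woven as soon as we check $\|I_{\hs}-T\|^2<A/B$.

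To estimate $\|I_{\hs}-T\|$ I would pass to the spectral picture of $S_{\Lambda}$. The g-frame inequality (1.2) means $AI_{\hs}\le S_{\Lambda}\le BI_{\hs}$, so $S_{\Lambda}$ is self-adjoint with $\sigma(S_{\Lambda})\subset[A,B]$; hence $\sigma(S_{\Lambda}^{-1})\subset[1/B,1/A]$, $\sigma(T)\subset\big[\frac{2A}{A+B},\frac{2B}{A+B}\big]$, and therefore $\sigma(I_{\hs}-T)\subset\big[\frac{A-B}{A+B},\frac{B-A}{A+B}\big]$. As $I_{\hs}-T$ is self-adjoint, its norm equals its spectral radius, so
$$\|I_{\hs}-T\|\le \frac{B-A}{A+B}.$$
(The weight $\frac{2AB}{A+B}$ is exactly the minimiser of $\|I_{\hs}-cS_{\Lambda}^{-1}\|$ over $c>0$, which is where it comes from.)

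It then remains to verify the elementary inequality $\big(\frac{B-A}{A+B}\big)^2<\frac{A}{B}$ under the hypothesis $B/A<2$. Writing $t=B/A\in[1,2)$, this is equivalent to $t(t-1)^2<(t+1)^2$, and here $t(t-1)^2<2\cdot 1=2<4\le(t+1)^2$, using $t<2$, $0\le t-1<1$ and $t+1\ge 2$. Combining with the previous paragraph we get $\|I_{\hs}-T\|^2<A/B$, so Proposition 5.3 applies and shows that $\Lambda$ and $\widetilde{\Lambda}$ are woven, with universal lower frame bound $A-B\big(\frac{B-A}{A+B}\big)^2$ and universal upper frame bound $\big(1+\big(\frac{2B}{A+B}\big)^2\big)B$.

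The whole argument is routine; the only step needing a little attention is the reduction of $\|I_{\hs}-T\|$ to a spectral-radius computation, which relies on $S_{\Lambda}$ being self-adjoint, together with the observation that the constraint on $B/A$ coming from $\|I_{\hs}-T\|^2<A/B$ is comfortably met on all of $[1,2)$.
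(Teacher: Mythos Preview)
Your proof is correct and follows essentially the same route as the paper: apply Proposition~5.3 with all $T_i$ equal to $T=\frac{2AB}{A+B}S_\Lambda^{-1}$, bound $\|I_{\hs}-T\|$ by $\frac{B-A}{A+B}$ via the spectral inclusion $\sigma(S_\Lambda)\subset[A,B]$, and then check $\big(\frac{B-A}{A+B}\big)^2<A/B$ under $B/A<2$. The only difference is that you spell out the final elementary inequality (via $t=B/A$), whereas the paper simply asserts that $\frac{B-A}{A+B}<\sqrt{A/B}$ when $B/A\le 2$; your extra remarks about the minimising role of the constant $\frac{2AB}{A+B}$ and the explicit universal bounds are add-ons not present in the paper but do no harm.
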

\begin{proof}
We apply Proposition 5.3 to the operator $T=T_i=T_j=\frac{2AB}{A+B}S_{\Lambda}^{-1}$ for all $i,j\in I$. Since the spectrum of $S_{\Lambda}$ is contained in the interval $[A,B]$, the spectrum of $I_{\hs}-T$ is contained in the interval $[\frac{A-B}{A+B},\frac{B-A}{A+B}]$ and thus $$\|I_{\hs}-T\|\le \frac{B-A}{B+A}.$$
This norm is majorized by $\sqrt{(A/B)}$, whenever $B/A\le 2$.
\end{proof}
\section*{Acknowledgements}
The research is supported by the National Natural Science Foundation of China (Nos. 11271001 and 61370147), and the Fundamental Research Funds for the Central Universities (No. ZYGX2016KYQD143).

\bibliographystyle{amsplain}

\bibliography{my} 

\end{document}